\newtheorem{theorem}{Theorem}[section]
\newtheorem{lemma}[theorem]{Lemma}
\newtheorem{corollary}[theorem]{Corollary}
\newtheorem{proposition}[theorem]{Proposition}
\theoremstyle{definition}
\newtheorem{definition}[theorem]{Definition}
\newtheorem{convent}[theorem]{Convention}
\newcommand\R{\mathbb{R}}
\newcommand\Z{\mathbb{Z}}
\newcommand\C{\mathbb{C}}
\newcommand\N{\mathbb{N}}
\newcommand\hs{\mathfrak I_2}
\newcommand\I{\mathfrak I}
\newcommand{\qtq}[1]{\quad\text{#1}\quad}
\newcommand\eps{\varepsilon}
\newcommand{\vk}{\varkappa}
\newcommand{\sbrack}[1]{^{[#1]}}
\newcommand{\ham}{H} 
\newcommand{\efg}{\text{\sl EF}}
\let\Re=\undefined\DeclareMathOperator{\Re}{Re}
\let\Im=\undefined\DeclareMathOperator{\Im}{Im}
\DeclareMathOperator{\Id}{Id}
\DeclareMathOperator{\tr}{tr}
\DeclareMathOperator{\sgn}{sgn}
\newcommand{\hlm}{\textsf M}
\newcommand\op{\mathrm{op}}
\newcommand{\ess}{\mathrm{ess}}
\newcommand{\mc}[1]{\mathcal{#1}}
\newcommand{\ms}[1]{\mathscr{#1}}
\newcommand{\ol}[1]{\overline{#1}}
\newcommand{\wh}[1]{\widehat{#1}}
\newcommand{\wt}[1]{\widetilde{#1}}
\newcommand{\norm}[1]{\left\lVert#1\right\rVert}
\newcommand{\snorm}[1]{\lVert#1\rVert}
\newcommand{\bnorm}[1]{\big\lVert#1\big\rVert}
\newcommand{\fwh}{\,\widehat{\ }\,}  
\numberwithin{equation}{section}
\begin{document}

\title[Scaling-critical well-posedness for CCM]{Scaling-critical well-posedness for\\continuum Calogero--Moser models}

\author[R.~Killip]{Rowan Killip}
\address{Department of Mathematics, University of California, Los Angeles, CA 90095, USA}
\email{killip@math.ucla.edu}

\author[T.~Laurens]{Thierry Laurens}
\address{Department of Mathematics, University of Wisconsin--Madison, WI, 53706, USA}
\email{laurens@math.wisc.edu}

\author[M.~Vi\c{s}an]{Monica Vi\c{s}an}
\address{Department of Mathematics, University of California, Los Angeles, CA 90095, USA}
\email{visan@math.ucla.edu}

\begin{abstract}
We prove that the focusing and defocusing continuum Calogero--Moser models are well-posed in the scaling-critical space $L^2_+(\R)$.  In the focusing case, this requires solutions to have mass less than that of the soliton.
\end{abstract}

\maketitle


\section{Introduction} \label{s:intro}

The goal of this paper is to address the well-posedness theory of the following two dispersive equations:
\begin{equation}\label{CM-DNLS}\tag{CCM}
i\tfrac{d}{dt} q = -q'' \pm 2iq C_+\big( |q|^2\big)' .
\end{equation}
As we will explain, a plus sign in front of the nonlinearity corresponds to a focusing equation;  a minus sign yields a defocusing model.  Throughout the paper, the $\pm$ and $\mp$ signs will be reserved for this dichotomy: the upper sign will correspond to the focusing case and the lower sign to the defocusing case.

The unknown field $q(t,x)$ appearing in \eqref{CM-DNLS} is a complex-valued function of $x\in\R$.  We further demand that $q(t)$ belongs to the Hardy space
$$
L^2_+(\R) := \{f\in L^2(\R):\, \widehat{f}(\xi) = 0 \text{ for } \xi<0\} .
$$
The operator $C_+$ appearing in the nonlinearity of \eqref{CM-DNLS} denotes the Cauchy--Szeg\H o projection from $L^2(\R)$ onto $L^2_+(\R)$; see \eqref{FT} and \eqref{E:Cauchy-Szego}.

The defocusing \eqref{CM-DNLS} appears first as a special case of the `intermediate nonlinear Schr\"odinger equation' introduced in \cite{Pelinovsky1995}.   Concretely, this equation is derived as a model for the envelope of an approximately monochromatic wave packet at the interface between otherwise quiescent fluid layers of infinite total depth, that is, it provides a modulation theory for the setting of the Benjamin--Ono equation.  In addition to deriving a model for the intermediate depth case, Pelinovsky also discovered the important nonlocal structure of the nonlinearity, which had been overlooked in the prior work \cite{Tanaka} on the infinite-depth problem.

Our interest in \eqref{CM-DNLS} was first sparked by the recent work \cite{GerardLenzmann} centered on the focusing case.  This model was formally derived in \cite{Abanov2009} as a continuum limit of the famous Calogero--Moser particle system \cite{Calogero1,Calogero2,Moser}: 
\begin{align}\label{CM}\tag{CM}
\frac{d^2 x_j}{dt^2}  = \sum_{k\neq j} \frac{1}{(x_j-x_k)^3}.
\end{align}
This discrete completely integrable system describes the dynamics of a gas of particles interacting pairwise through an inverse square potential.  It was subsequently discovered that the system remains completely integrable for interaction potentials derived from the Weierstrass $\wp$ function.  Included in this class is the cosecant-squared potential, which arises naturally in the periodic setting.  This latter model was first championed by Sutherland \cite{Sutherland1,Sutherland2} in the quantum mechanical setting.  Correspondingly, the periodic analogue of \eqref{CM-DNLS} has been studied under the name Calogero--Sutherland derivative NLS; see \cite{Badreddine2023}.

Whether derived as a modulation theory for the Benjamin--Ono equation or as a continuum limit of the Calogero--Moser system, there can be no surprise that the \eqref{CM-DNLS} equations are completely integrable.   This fact will ultimately play a key role in what follows.

The most physically intuitive conserved quantities are 
\begin{align*}
M(q) := \int |q|^2\,dx \qtq{and} P(q) := \int -i\ol{q}q' \mp \tfrac{1}{2}|q|^4 \,dx, 
\end{align*}
which represent mass and momentum, together with the Hamiltonians
\begin{align}\label{E:Hamil}
H_\pm(q)&:=\frac12\int_{\R} \bigl| q'\mp iqC_+(|q|^2) \bigr|^2 \, dx .
\end{align}
We caution the reader that these identifications rely on the non-standard Poisson structure naturally associated to \eqref{CM-DNLS}; see \cite{GerardLenzmann}.  This exotic structure originates from the use of a gauge transformation to simplify the underlying phase space.

The \eqref{CM-DNLS} equations are  \emph{mass-critical}: both the class of solutions and the conserved mass $M(q)$ are invariant under the scaling
$$
q(t,x) \mapsto q_\lambda(t,x) := \sqrt{\lambda} q\bigl(\lambda^2t, \lambda x\bigr) \quad\text{for $\lambda>0$}.
$$
These considerations make $L^2_+(\R)$ the most natural space in which to address the well-posedness problem.  In this sense, we are able to give a complete solution to the defocusing problem:

\begin{theorem}[Defocusing case]\label{t:wpd}
The defocusing \eqref{CM-DNLS} equation is globally well-posed in the space $L^2_+(\R)$.  Moreover, if $Q\subset L^2_+(\R)$ is a bounded and equicontinuous set, then the set of orbits
\begin{equation}\label{Q*d}
Q^* := \{ e^{tJ\nabla\ham_-} q : q\in Q,\ t\in\R \} 
\end{equation}
is bounded and equicontinuous in $L^2_+(\R)$.
\end{theorem}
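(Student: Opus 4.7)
The plan is to employ the method of commuting flows, a framework that has proven effective for establishing scaling-critical well-posedness of mass-critical integrable equations such as KdV, mKdV, and cubic NLS on $\R$. The strategy is to bridge classical (smooth-data) solutions and rough $L^2_+(\R)$ data through a family of regularized Hamiltonian flows that approximate \eqref{CM-DNLS} and commute with it at the smooth level. The defocusing sign is important: it renders the Hamiltonian $\ham_-(q)$ of \eqref{E:Hamil} coercive on all mass scales, with no smallness hypothesis required.

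As a preliminary step, I would build a robust classical theory. For $q_0\in\Schw\cap L^2_+(\R)$, the conserved quantities $M(q)$ and $\ham_-(q)$ yield an a priori bound on $\|q\|_{H^1}$, and standard energy methods give global smooth solutions persisting in $H^s\cap L^2_+(\R)$ for every $s\geq 1$. These classical solutions are the objects we ultimately approximate.

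The crux of the argument — and the main obstacle — is the derivation of a \emph{microscopic} conservation law. Leveraging the Lax pair for \eqref{CM-DNLS}, one constructs a one-parameter family $\alpha(\kappa;q)$, $\kappa>0$, of conserved functionals realized as a trace or perturbation-determinant built from the Lax operator. The target is a uniform equivalence
\[
\alpha(\kappa;q)\sim \|q\|_{\kappa}^2,
\]
where $\|\cdot\|_\kappa$ is an appropriate $\kappa$-dependent seminorm on $L^2_+(\R)$ that recovers $M(q)$ as $\kappa\to\infty$ and whose low-$\kappa$ behavior detects the low-frequency tail of $\widehat{q}$. This is precisely the structural ingredient needed to convert bounded equicontinuity of $Q$ into that of $Q^*$ as defined in \eqref{Q*d}, thereby proving the second assertion of the theorem. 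Establishing finiteness of $\alpha(\kappa;q)$ on all of $L^2_+(\R)$ for $\kappa\gg 1$, its smoothness under classical flows, and the vanishing of its time derivative will require careful operator-theoretic estimates specific to the Cauchy--Szeg\H o projector $C_+$ and the nonlocal structure of the nonlinearity; by comparison, the analogous estimate for Benjamin--Ono or cubic NLS is simpler due to the absence of a derivative in the nonlinearity.

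With the microscopic conservation law in place, for each $\kappa>0$ I would form the commuting Hamiltonian flow $H^\kappa$ generated by $\alpha(\kappa;\cdot)$ via the Poisson structure of the hierarchy. By design, $H^\kappa$ is a bounded perturbation of the free linear Schr\"odinger flow on $L^2_+(\R)$ (hence globally well-posed there), commutes with \eqref{CM-DNLS} on smooth data, and converges to \eqref{CM-DNLS} as $\kappa\to\infty$, uniformly on bounded time intervals and bounded equicontinuous subsets of $L^2_+(\R)$. A diagonal density argument then extends the solution map continuously from Schwartz data to all of $L^2_+(\R)$: approximate a given $q_0$ by Schwartz $q_0^n$, use the microscopic law to ensure the orbits $q^n(t)$ remain bounded and equicontinuous in $L^2_+(\R)$, and identify their limit with $\lim_{\kappa\to\infty} e^{tJ\nabla H^\kappa}q_0$. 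Uniqueness and continuous dependence follow from the same diagram.
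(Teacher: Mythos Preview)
Your plan for equicontinuity is close in spirit to the paper's: the paper builds the conserved quantity $\beta(\kappa,q)=M(q)+2\pi\kappa\tr\{R(\kappa)-R_0(\kappa)\}$ from the resolvent of the Lax operator, shows its quadratic part $\beta^{[2]}(\kappa,q)=\int_0^\infty\tfrac{\xi}{\xi+\kappa}|\wh q(\xi)|^2\,d\xi$ detects the high-frequency tail, and in the defocusing case exploits a favorable sign in the remainder to deduce $\beta^{[2]}\leq\beta$. (Small point: it is the large-$\kappa$ behavior, not the small-$\kappa$ behavior, that captures equicontinuity.) So that portion of your outline is on the right track.

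Where your proposal diverges is in the well-posedness mechanism. You propose regularized commuting Hamiltonians $H^\kappa$ and a diagonal limit argument; the paper explicitly does \emph{not} do this. As the authors state in the introduction, ``we will employ no regularized flows or commuting Hamiltonians in this paper.'' Instead, they derive an explicit formula $q(t,z)=\tfrac{1}{2\pi i}I_+\bigl[(X+2t\mc L_{q^0}-z)^{-1}q^0\bigr]$ for smooth well-decaying data, use it to identify the weak limit of approximating solutions pointwise in time, and then upgrade weak to strong $C_tL^2_x$ convergence by separately establishing equicontinuity in $t$ and \emph{tightness} in $x$ (the latter again via the explicit formula). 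The explicit formula buys them direct control over spatial localization, which on the line is not a byproduct of equicontinuity alone.

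This is the genuine gap in your sketch: on $\R$ (unlike the circle), precompactness in $L^2$ requires tightness in addition to boundedness and equicontinuity, and nothing in your commuting-flows outline explains how tightness of orbits would be propagated or recovered. In prior commuting-flows papers this is handled by local smoothing estimates for both the true and regularized flows; here, with the derivative and the projector $C_+$ in the nonlinearity, that step is nontrivial and you have not indicated how to carry it out. Likewise, the assertion that the $H^\kappa$-flow is ``a bounded perturbation of the free Schr\"odinger flow'' and hence globally well-posed on $L^2_+(\R)$ is plausible but unverified for this model. Your approach may well be viable, but as written it omits the tightness argument that the paper's explicit-formula route supplies.
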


The analogous theorem for periodic initial data was proved recently by Badreddine \cite{Badreddine2023}, whose influence on our work will be discussed in due course.  One point in which we must diverge is in the treatment of equicontinuity. 

\begin{definition}[Equicontinuity]\label{d:equicontinuity}
Fix $s\in \R$. A bounded set \(Q\subset H^s_+(\R)\) is said to be \emph{equicontinuous} in the $H^s(\R)$ topology if
\[
\limsup_{\delta\to0} \ \sup_{q\in Q} \ \sup_{|y|<\delta}\|q(\cdot +y) - q(\cdot)\|_{H^s(\R)} = 0.
\]
\end{definition}

Replacing the $H^s(\R)$ norm here by the supremum norm would yield the familiar notion appearing in the Arzel\`a--Ascoli Theorem.  Indeed, boundedness, tightness, and equicontinuity constitute necessary and sufficient conditions for a set to be $H^s(\R)$-precompact.  Equicontinuity may also be understood via Plancherel's Theorem: a bounded set $Q\subset H^s(\R)$ is equicontinuous if and only if
\begin{equation}\label{equicty 1}
\lim_{ \kappa\to\infty}\ \sup_{q\in Q}\int_{|\xi|\geq \kappa} |\wh{q}(\xi)|^2 (|\xi|+1)^{2s}\, d\xi =0,
\end{equation}
which is to say, the family of Fourier transforms is tight.

Equicontinuity plays a central role in the treatment of scaling-critical problems.  It is precisely this property that prevents physical-space concentration and so blowup.  Due to its scaling-critical nature, the conservation of mass is completely insensitive to any changes of scale.
On the other hand, control of higher regularity explicitly ensures equicontinuity.  Such a higher regularity theory has been developed in \cite{GerardLenzmann} in a manner applicable to both the focusing and defocusing models; see, for example, Theorem~\ref{T:GL} below.  To properly describe this, we must first discuss some startling phenomenology discovered in \cite{GerardLenzmann} regarding the focusing model.

The focusing \eqref{CM-DNLS} equation admits soliton solutions: 
\begin{equation}\label{CCMsoliton} 
q(t,x) = \sqrt{\lambda} R ( \lambda x+x_0) \qtq{with} \lambda>0, \quad x_0\in \R, \qtq{and} R(x) := \tfrac{\sqrt{2}}{x+i}.
\end{equation}
These (stationary) solitons all have the same mass, namely, $M(R)=2\pi$.  Moreover, the focusing \eqref{CM-DNLS} admits multisoliton solutions; these have been analyzed in detail in \cite{GerardLenzmann}.

The multisoliton solutions are rational functions of $x$ at every moment of time.  Their poles and residues both evolve in time, with the former satisfying a complex Calogero--Moser system.  Such a solution is called an $N$-soliton if there are $N$ poles.   Their mass is quantized; it is $2\pi N$, that is, $N$ times the mass of the one-soliton $R$.  The most striking discovery, however, is that for $N\geq2$, these $N$-solitons exhibit frequency cascades as $t\to\pm\infty$.  As a consequence, for every $s>0$, the $H^s(\R)$ of these smooth solutions is unbounded as $t\to\pm\infty$.

By contrast, \cite{GerardLenzmann} shows that smooth solutions with mass below that of the one-soliton do not exhibit such norm growth: 

\begin{theorem}[\cite{GerardLenzmann}]\label{T:GL}
Let $n\geq 1$ be an integer.  The focusing \eqref{CM-DNLS} is locally well-posed in $H^n_+(\R)$.  Furthermore, it is globally well-posed on $\{ q\in H^n_+(\R) : M(q)<2\pi\}$ and these solutions satisfy
\begin{equation}\label{GLbd}
\sup_{t\in \R} \|q(t)\|_{H^n_+(\R)}<\infty.
\end{equation}
The defocusing equation is globally well-posed in $H^n_+(\R)$, without any mass constraint, and all solutions also satisfy \eqref{GLbd}.
\end{theorem}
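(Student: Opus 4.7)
My plan has three stages: local well-posedness in $H^n_+(\R)$ via energy estimates, a priori $H^1$ bounds via the Hamiltonian together with a sharp inequality whose threshold is $M(R) = 2\pi$, and propagation of higher regularity via the integrable hierarchy.

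\emph{Local well-posedness in $H^n_+(\R)$, $n \geq 1$.} The nonlinearity $N(q) := 2iq\,C_+(|q|^2)'$ maps $H^n_+$ into $H^{n-1}_+$ since $H^n(\R)$ is a Banach algebra for $n \geq 1$ and $C_+$ is bounded on every $H^s$. A standard energy estimate controls $\tfrac{d}{dt}\|q(t)\|_{H^n}^2$: the only top-order contribution, which naively carries $n+1$ derivatives on the factor $C_+(|q|^2)$, can be rewritten using that $C_+$ commutes with $\partial_x$ and integrated by parts so that the remaining commutators involve at most $n$ derivatives on $q$ and are absorbed via Gronwall. A parabolic regularization combined with this a priori bound then yields local well-posedness with continuous data-to-solution map, with no mass restriction.

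\emph{A priori $H^1$ bound.} Expanding the sum-of-squares form of the Hamiltonian gives
\[
2H_\pm(q) = \|q'\|_2^2 \mp 2\Im\!\int q'\bar q\,\overline{C_+(|q|^2)}\,dx + \int |q|^2 |C_+(|q|^2)|^2\,dx .
\]
Since $H_\pm(q) \geq 0$ and $M(q) = \|q\|_2^2$ are both conserved, I isolate $\|q'\|_2^2$ by Cauchy--Schwarz on the cross term, bounding it by $\|q'\|_2 \cdot \|q\,C_+(|q|^2)\|_2$, and then by a Gagliardo--Nirenberg-type estimate of the schematic form $\|q\,C_+(|q|^2)\|_2 \lesssim \|q\|_2^2\,\|q'\|_2$. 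In the defocusing case the cross term has a favorable sign and an unconditional $H^1$ bound follows. In the focusing case the bound takes the form $(1 - C\,M(q))\,\|q'\|_2^2 \lesssim H_+(q)$, so coercivity is conditional on $M(q)$ lying below a threshold; the sharp value of that threshold is identified as $2\pi$ via a variational argument in which the soliton $R$ is the unique minimizer (modulo symmetries). This is consistent with the direct computation $H_+(R) = 0$ and $M(R) = 2\pi$.

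\emph{Higher regularity and globalization.} Complete integrability furnishes a hierarchy of conserved functionals $\{H_\pm^{(k)}\}_{k \geq 1}$ extracted from the Lax representation, each of the structural form $H_\pm^{(k)}(q) = \|q\|_{\dot H^k}^2 + \Phi_k(q, \overline{q}, \partial^{<k} q)$ with a lower-order remainder. Inductively, an $H^{k-1}$ bound (together with $M(q) < 2\pi$ in the focusing case) controls $\Phi_k$, and conservation of $H_\pm^{(k)}$ then upgrades to an $H^k$ bound. Iterating from $k = 1$ up to $k = n$ yields the uniform-in-time estimate \eqref{GLbd}, which combined with the local well-posedness of the first stage delivers global well-posedness under the stated hypotheses. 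The principal obstacle is the sharp coercivity estimate---pinning down $2\pi$ as the precise threshold, for which I expect to need either a direct variational analysis identifying $R$ as the extremizer or an equivalent Lax-spectral argument---together with the construction of the hierarchy and the verification that each $H_\pm^{(k)}$ has the required algebraic structure so that the induction actually closes under the mass constraint.
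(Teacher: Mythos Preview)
This theorem is not proved in the paper; it is quoted from \cite{GerardLenzmann} as background. So there is no ``paper's own proof'' to compare against, only what the paper indicates about the argument in \cite{GerardLenzmann}. Your three-stage architecture (local theory, $H^1$ bound via the Hamiltonian and a sharp inequality, inductive control of higher norms from the integrable structure) matches that architecture. But two specifics deserve correction.

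First, the sharp inequality underlying the $2\pi$ threshold is not a Gagliardo--Nirenberg estimate for $\|q\,C_+(|q|^2)\|_{L^2}$; it is the operator bound
\[
\|C_+(\bar q f)\|_{L^2} \le \tfrac{1}{\sqrt{2\pi}}\,\|q\|_{L^2}\,\||\partial|^{1/2}f\|_{L^2},
\]
with $R$ as optimizer (this is \eqref{op est 4} in the paper). This is what makes $\mc L_q \ge (1-\tfrac{M(q)}{2\pi})\mc L_0$ in the focusing case and $\mc L_q\ge \mc L_0$ trivially in the defocusing case (since the perturbation $+qC_+\bar q$ is positive). Your assertion that ``in the defocusing case the cross term has a favorable sign'' is not correct as written: the cross term in the expanded Hamiltonian is not sign-definite, and the unconditional defocusing $H^1$ bound instead comes from positivity of the Lax perturbation. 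If you pursue your Cauchy--Schwarz route via a sharp bound on $\|q\,C_+(|q|^2)\|_{L^2}$ you would need to prove that inequality with the precise constant, which is a separate (and not obviously equivalent) variational problem.

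Second, the ``hierarchy'' used in \cite{GerardLenzmann} is not a list of independent Hamiltonians $H^{(k)}$ of unspecified form; the conserved quantities are exactly $\|\mc L_q^{\,n} q\|_{L^2}^2$, conserved because both $q(t)=U(t;0)q(0)$ and $\mc L_{q(t)}=U(t;0)\mc L_{q(0)}U(t;0)^*$ (see Proposition~\ref{P:Pflow} here). The induction then reduces to comparing $\|\mc L_q^{\,n} q\|$ with $\|\mc L_0^{\,n} q\|=\|q\|_{\dot H^n}$, which again rests on \eqref{op est 4} and the mass constraint. Framing it this way both clarifies why the induction closes and why the same threshold $2\pi$ persists at every regularity level.
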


Strictly speaking, the paper \cite{GerardLenzmann} does not discuss well-posedness in the defocusing case; nevertheless, applying the arguments described there yields the results stated above.  The paper also demonstrates local well-posedness for regularities $s>\tfrac12$ and shows that smooth initial data with mass exactly equal to $2\pi$ leads to a global smooth solution.

The specific number $2\pi$ in Theorem~\ref{T:GL} originates from the sharp constant in the inequality \eqref{op est 4}.  This threshold coincides with the mass of the soliton, precisely because $R$ is an optimizer for the inequality \eqref{op est 4}.

When considering initial data from $H^\infty_+(\R)$, Theorem~\ref{T:GL} allows us to discuss \emph{the} solution to \eqref{CM-DNLS}.  The first question we ask about such solutions is one of equicontinuity.  The explicit form of the two-soliton solutions presented in \cite{GerardLenzmann} shows that they do not have $L^2$-equicontinuous orbits.  Thus, the global-in-time equicontinuity property of Theorem~\ref{t:wpd} does not extend to the focusing case.  On the other hand, the results presented in Theorem~\ref{T:GL} strongly suggest that global-in-time equicontinuity may hold under a mass constraint, even though the arguments used in \cite{GerardLenzmann} do not yield a proof of this.  This is our first result on the focusing problem:

\begin{theorem}[Equicontinuity for the focusing \eqref{CM-DNLS}]\label{t:equicty}
If $Q\subset H^\infty_+(\R)$ satisfies 
\begin{equation}\label{M<M*}
\sup_{q\in Q} \norm{q}_{L^2}^2 < 2\pi 
\end{equation}
and is equicontinuous in $L^2(\R)$, then the set of orbits
\begin{equation}\label{Q*f}
Q^* := \{ e^{tJ\nabla\ham_+} q : q\in Q,\ t\in\R \} 
\end{equation}
is bounded and equicontinuous in $L^2(\R)$.
\end{theorem}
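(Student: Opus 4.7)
The plan is to construct a family of conserved quantities that quantitatively captures $L^2$-equicontinuity, and then to exploit the strict mass gap $M(q)<2\pi$ to make these quantities uniformly coercive on $Q^*$.

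Concretely, building on the Lax-pair structure established in \cite{GerardLenzmann}, I would associate to each $q\in L^2_+(\R)$ a nonnegative self-adjoint Lax operator $L_q$ on $L^2_+(\R)$ and define, for $\kappa>0$ large, the generating function
\[
\alpha(\kappa;q) := \bigl\langle q,(L_q+\kappa)^{-1}q\bigr\rangle_{L^2}.
\]
Two properties must be established:
\begin{enumerate}[label=(\roman*)]
\item $\alpha(\kappa;\cdot)$ is conserved under the focusing \eqref{CM-DNLS} flow;
\item under the constraint $M(q)\le M_* <2\pi$, there exist $c,C>0$ depending only on $M_*$ such that
\[
c\int_0^\infty\frac{|\wh q(\xi)|^2}{\xi+\kappa}\,d\xi \,\le\, \alpha(\kappa;q) \,\le\, C\int_0^\infty\frac{|\wh q(\xi)|^2}{\xi+\kappa}\,d\xi,
\]
uniformly in $\kappa$ sufficiently large.
\end{enumerate}
Property (i) follows from the Lax equation $\tfrac{d}{dt}L_q=[P_q,L_q]$, which leaves the spectrum of $L_q$, and hence all resolvent traces, invariant. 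Property (ii) is a quantitative coercivity statement about $L_q$.

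Granted (i) and (ii), the proof is essentially immediate. The algebraic identity $\kappa(L_q+\kappa)^{-1}=I-L_q(L_q+\kappa)^{-1}$ yields
\[
M(q)-\kappa\,\alpha(\kappa;q) = \bigl\langle q,\, L_q(L_q+\kappa)^{-1}q\bigr\rangle,
\]
which via (ii) is comparable to $\int_0^\infty \tfrac{\xi|\wh q(\xi)|^2}{\xi+\kappa}\,d\xi$ — precisely the weighted high-frequency tail controlling the Plancherel criterion \eqref{equicty 1}. The hypothesis that $Q$ is $L^2$-equicontinuous says exactly that this tail goes to $0$ as $\kappa\to\infty$, uniformly in $q\in Q$. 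Since both $M$ and $\alpha(\kappa;\cdot)$ are conserved, the same uniform vanishing persists on $Q^*$, yielding its equicontinuity. Boundedness of $Q^*$ in $L^2$ is automatic from mass conservation.

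The main obstacle is the lower bound in (ii), which is where the mass threshold enters decisively. The value $2\pi$ is exactly the critical mass at which $L_q$ ceases to be uniformly positive: the one-soliton $R$, which saturates the sharp inequality \eqref{op est 4} and has $M(R)=2\pi$, furnishes a zero mode. Establishing a quantitative spectral gap for $L_q$ in terms of the deficit $2\pi-M(q)$ and translating it into the stated equivalence with the weighted Fourier norm — via, for instance, Schur bounds and comparison with the free operator $D_+=-i\partial$ on $L^2_+(\R)$ — constitutes the bulk of the technical work. The frequency-cascade behaviour of the $N$-soliton solutions (with $M=2\pi N$) already shows that no bound of this type can survive beyond the threshold, so any such coercivity argument must be tight in its use of the strict mass gap.
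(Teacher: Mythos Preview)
Your approach works and is in fact cleaner than the paper's proof of Theorem~\ref{T:foc equi}, but two steps need sharpening. First, conservation of $\alpha(\kappa;q)=\langle q,(\mc L_q+\kappa)^{-1}q\rangle$ is \emph{not} a consequence of spectral invariance of $\mc L_q$ alone: $\alpha$ is not a resolvent trace. You also need that $q(t)=U(t;0)q(0)$ for the unitary implementing the Lax conjugation; this is \eqref{one true P} (see also \eqref{L conj}). Second, your (ii) does not by itself yield the comparability of $M(q)-\kappa\alpha(\kappa;q)$ with $\beta\sbrack{2}(\kappa,q)$: a two-sided bound on $\alpha$ does not survive the subtraction. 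The fix is to use the form comparison $(1-\tfrac{M}{2\pi})\mc L_0\leq \mc L_q\leq \mc L_0$ --- a one-line consequence of \eqref{op est 4}, not ``the bulk of the technical work'' --- together with operator monotonicity of $E\mapsto E/(E+\kappa)$; this gives $(1-\tfrac{M}{2\pi})\,\beta\sbrack{2}(\kappa,q)\leq \langle q,\mc L_q(\mc L_q+\kappa)^{-1}q\rangle\leq\beta\sbrack{2}(\kappa,q)$ and the argument concludes.

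The paper takes a different route: its conserved quantity is $\beta(\kappa,q)$ from \eqref{beta defn'}, built from the trace $\tr\{R(\kappa)-R_0(\kappa)\}$ rather than from a single inner product. In the focusing case the higher-order remainder \eqref{beta0 higher} has an unfavorable sign, and the paper controls it not pointwise in $\kappa$ but by summing over a lacunary set $\mathcal K\subseteq 2^{\N}$ and invoking the Hilbert--Schmidt estimate \eqref{hs improved}; equicontinuity then follows from the $\mathcal K$-norm criterion of Lemma~\ref{L:equi via K}. Your argument sidesteps both the trace-class machinery and the $\mathcal K$-norm device; it is essentially the operator-monotone technique that the paper later introduces in Section~\ref{S:6} for $H^s$ equicontinuity, specialized here to $s=0$.
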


Recall that the two-soliton solutions demonstrate that this conclusion would fail if the number $2\pi$ in \eqref{M<M*} were replaced by any number larger than $4\pi$.  At this moment, we are not ready to a make a conjecture about the true mass threshold for global-in-time equicontinuity.

As noted above, control on the frequency distribution in a scaling-critical space is an essential ingredient for proving global well-posedness because it precludes space concentration.  For this purpose, it is not necessary that orbits remain equicontinuous globally in time.  It would suffice to know that equicontinuous sets of smooth initial data lead to families of orbits that are equicontinuous on compact time intervals.  This line of reasoning leads to our introduction of the following mass threshold:

\begin{definition}\label{t:M* def}
Let $M^* \in [0,\infty]$ denote the maximal constant so that for any $L^2$ bounded and equicontinuous set $Q\subset H^\infty_+(\R)$ satisfying
\begin{equation*}
\sup_{q\in Q} M(q) < M^* ,
\end{equation*}
the set of partial orbits
\begin{equation}
Q^*_T := \{ e^{tJ\nabla H_\pm} q : q\in Q,\ |t|< T \} 
\label{Q*T}
\end{equation}
is $L^2$-equicontinuous for each choice of $T>0$.
\end{definition}

We are not implicitly assuming here that all choices of smooth initial data lead to a global solution.  In the focusing case, this is an open problem.  Rather, we will show that finite-time blowup must be accompanied by a loss of equicontinuity; see Lemma~\ref{L:horizon} for details.

In the defocusing case, Theorem~\ref{t:wpd} shows that $M^*=\infty$.  Indeed, we will first prove that $M^*=\infty$ in Section~\ref{S:3} and then later use this as an ingredient in proving well-posedness in Section~\ref{S:5}.

In the focusing case, Theorem~\ref{t:equicty} guarantees $M^*\geq 2\pi$.  The analysis of multisoliton solutions in \cite{GerardLenzmann} places no restrictions on $M^*$; indeed, mass moves to high frequencies in a manner that is linear in time.

The main rationale for introducing the equicontinuity threshold $M^*$ is that it sets the proper generality for our well-posedness analysis of the focusing problem.  It will also allow us to treat the  focusing and defocusing problems in a parallel manner.

\begin{theorem}[Global well-posedness in the focusing case]\label{t:wpf}
The focusing \eqref{CM-DNLS} equation is globally well-posed in the space
\begin{equation*}
B_{M^*} = \{ q\in L^2_+(\R) : \norm{q}_{L^2}^2 < M^* \}.
\end{equation*}
\end{theorem}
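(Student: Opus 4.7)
The plan is to construct solutions in $B_{M^*}$ as limits of the smooth global solutions provided by Theorem~\ref{T:GL}, using the equicontinuity threshold $M^*$ of Definition~\ref{t:M* def} to control high frequencies on bounded time intervals. The structure parallels the proof of Theorem~\ref{t:wpd}: the threshold $M^*$ plays the role there occupied by $+\infty$.

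\textbf{Step 1: Globality of smooth approximants.} Fix $q_0\in B_{M^*}$ and choose $q_0^{(n)}\in H^\infty_+(\R)$ with $q_0^{(n)}\to q_0$ in $L^2_+(\R)$; discarding finitely many terms I may assume $\sup_n M(q_0^{(n)})<M^*$. Since $L^2$-convergent sequences are automatically equicontinuous, the set $Q=\{q_0^{(n)}\}$ satisfies the hypotheses of Definition~\ref{t:M* def}. Theorem~\ref{T:GL} produces a local smooth solution $q^{(n)}$ from each datum, and Lemma~\ref{L:horizon} (flagged in the remark following Definition~\ref{t:M* def}) states that finite-time blowup entails a loss of equicontinuity along the orbit. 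The defining property of $M^*$ excludes this possibility, so each $q^{(n)}$ is global. Consequently, for every $T>0$ the family $Q^*_T=\{q^{(n)}(t):n\in\N,\ |t|\le T\}$ is bounded in $L^2_+(\R)$ (by mass conservation) and equicontinuous (by the choice of $M^*$).

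\textbf{Step 2: Cauchy property and passage to the limit.} The heart of the argument is to show that $\{q^{(n)}\}$ is Cauchy in $C([-T,T];L^2_+(\R))$ for each $T>0$. Direct energy estimates fail because the nonlinearity $iqC_+(|q|^2)'$ loses a derivative, so one must exploit the integrable structure of \eqref{CM-DNLS}. Following the broader commutator/trace-formula philosophy used for dispersive equations at low regularity, I would derive a difference identity from the Lax pair of \cite{GerardLenzmann} expressing $\|q^{(n)}(t)-q^{(m)}(t)\|_{L^2}$ in terms of $\|q_0^{(n)}-q_0^{(m)}\|_{L^2}$, modulated by quantities controlled uniformly by the mass bound and by the equicontinuity of $Q^*_T$ established in Step~1. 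The strict inequality $M(q_0)<M^*$ keeps the relevant spectral objects quantitatively nondegenerate (mirroring the role of $M<2\pi$ in Theorem~\ref{T:GL}, where $R$ saturates the inequality \eqref{op est 4}). Granting such an estimate, $\{q^{(n)}\}$ converges in $C([-T,T];L^2_+(\R))$ to a candidate solution $q$, which one identifies as a distributional (or trace-formula) solution to \eqref{CM-DNLS} by passing to the limit.

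\textbf{Step 3: Uniqueness and continuous dependence.} The same difference estimate, applied to two candidate solutions with the same datum, gives uniqueness. For continuous dependence, given $q_0^{(n)}\to q_0$ in $L^2_+(\R)$ with $M(q_0)<M^*$, the set $\{q_0^{(n)}\}$ is eventually bounded by $M^*$ in mass and is equicontinuous; applying Step~2 to this family (first mollifying each $q_0^{(n)}$ to reduce to smooth data) produces uniform $L^2$-convergence of the associated solutions on each $[-T,T]$.

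\textbf{Main obstacle.} The crux is the $L^2$-difference estimate in Step~2. No estimate of this strength can come from generic dispersive or energy methods at the scaling-critical regularity; it must be anchored in the integrable geometry of \eqref{CM-DNLS}, and the mass constraint $M<M^*$ is precisely what keeps the underlying Lax-type operator from becoming singular. The multisoliton examples of \cite{GerardLenzmann} show that without this constraint equicontinuity is lost and the whole scheme breaks down, so Step~2 is genuinely where the focusing sign of the nonlinearity enters the analysis.
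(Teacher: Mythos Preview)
Your Step~1 is essentially right and matches the paper's setup: one approximates by smooth well-decaying data with mass below $M^*$, and Lemma~\ref{L:horizon} together with Definition~\ref{t:M* def} ensures these approximants are global and their orbits over $[-T,T]$ are $L^2$-equicontinuous.

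Step~2, however, is where you diverge from the paper and where your argument has a real gap. You propose to close via a direct $L^2$-difference estimate of the form $\|q^{(n)}(t)-q^{(m)}(t)\|_{L^2}\lesssim\|q^{(n)}_0-q^{(m)}_0\|_{L^2}$, extracted somehow from the Lax pair. The paper does \emph{not} establish any such Lipschitz-type bound, and it is unclear that one exists in a useful form; you yourself flag this as the main obstacle and do not supply the estimate. The paper's route is fundamentally different: it upgrades weak convergence to strong convergence by proving \emph{compactness}. Concretely, the explicit formula \eqref{smooth EF} (valid for smooth well-decaying data by Theorem~\ref{T:smooth EF}) is shown to be $L^2$-continuous in the initial data (Proposition~\ref{t:q formula}), which pins down the weak limit uniquely (Corollary~\ref{C:weak}). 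One then proves that $\{q^{(n)}(t):n\ge1,\ |t|\le T\}$ is precompact in $C_tL^2_x$ by verifying (i) spatial equicontinuity (Section~\ref{S:3}), (ii) temporal equicontinuity (Lemma~\ref{L:equi in t}), and (iii) spatial tightness (Propositions~\ref{t:tight 1} and~\ref{t:tight 2}, which again rely on the explicit formula). Arzel\`a--Ascoli plus uniqueness of the weak limit then gives strong $C_tL^2_x$ convergence (Theorem~\ref{t:conv}), and continuous dependence follows by a diagonal argument, not by a stability estimate.

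So the missing idea in your proposal is the explicit formula \eqref{magic formula} and its use both to identify the limit and, crucially, to establish tightness on the line---this last point has no analogue in your sketch and is what distinguishes the whole-line problem from the periodic case of \cite{Badreddine2023}. Your Step~3 also inherits the gap: uniqueness and continuous dependence in the paper come from the uniqueness of the weak limit and the compactness argument, not from a difference estimate.
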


The analogous result in the periodic setting was obtained in \cite{Badreddine2023}.  The relation between the arguments used here and in \cite{Badreddine2023} will be elaborated below.

In Section~\ref{S:6}, we will show how to deduce well-posedness at higher regularities from our scaling-critical results.  This rests on a new technique for proving $H^s$-equicontinuity that may be of independent interest.

As mentioned earlier, the complete integrability of \eqref{CM-DNLS} will play a central role in our analysis.  This integrability will manifest in two ways: through the Lax pair and an explicit formula (in the sense of G\'erard).

The Lax pair we employ is the following:
\begin{equation}\label{Lax pair}
\mc L = -i\partial \mp q C_+\ol{q} \qtq{and} \mc P = i\partial^2 \pm 2q\partial C_+\ol{q} . 
\end{equation}
A Lax pair formulation of the full intermediate NLS was introduced in \cite{PelinovskyG1995}; recall that one may recover the defocusing \eqref{CM-DNLS} as an infinite depth limit of this model.  The Lax pair \eqref{Lax pair} is a small modification of the one presented in \cite{GerardLenzmann}, which uses
\begin{equation}\label{bad P}
\widetilde{\mc P}:= \mc P + i\mc L_q^2 = \pm qC_+\ol{q}' \mp q' C_+\ol{q} + iqC_+|q|^2C_+\ol{q} .
\end{equation}
Evidently, the change to $\mc P$ has no effect on the commutator relation.  In choosing to center our analysis on \eqref{Lax pair}, we were very much informed by our prior work \cite{Killip2023} on the Benjamin--Ono equation.  Concretely, we favor $\mc P$ because it enjoys two advantageous properties : First, \eqref{CM-DNLS} may be written as
\begin{equation}
\tfrac{d}{dt} q = \mc P q,
\label{one true P}
\end{equation}
and secondly, $\mc P 1 = 0$.  As the constant function $1$ lies outside the natural Hilbert space setting, this latter relation requires some further interpretation; see \eqref{E:P1=0} for a precise statement.

The second manifestation of complete integrability is an explicit formula of a type championed by G\'erard and collaborators \cite{MR3301889,MR4662323,Gerard2023Szego}:

\begin{theorem}\label{t:magic formula}
Let $q(t)$ denote the global solution to~\eqref{CM-DNLS} with initial data $q^0\in L^2_+(\R)$ satisfying $M(q^0)<M^*$. Then 
\begin{equation}\label{magic formula}
q(t,z) = \tfrac{1}{2\pi i} I_+\big\{ (X + 2t\mc{L}_{q^0} - z)^{-1} q^0 \big\}
\end{equation}
for all $z$ with $\Im z>0$.  Here $q(t,z)$ is defined via harmonic extension; see \eqref{PIF11}.
\end{theorem}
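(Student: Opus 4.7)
My plan is to establish \eqref{magic formula} first for smooth initial data $q^0 \in H^\infty_+(\R)$ with $M(q^0) < M^*$, using the Lax pair calculus rigorously, and then to pass to the limit for general $q^0 \in L^2_+(\R)$ satisfying the same subcriticality constraint via density and continuous dependence.

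For smooth data I would introduce the unitary propagator $U(t)$ defined by $\partial_t U(t) = \mc{P}(t) U(t)$ with $U(0) = \Id$, treating $\mc{P}$ as skew-adjoint. Three immediate consequences follow: from \eqref{one true P}, $q(t) = U(t) q^0$; from the Lax equation $\partial_t \mc{L} = [\mc{P}, \mc{L}]$, $\mc{L}(t) = U(t) \mc{L}_{q^0} U(t)^*$; and from $\mc{P} 1 = 0$, the propagator preserves the constant function $1$, so that $I_+ \circ U(t)^* = I_+$ on a suitable dense subspace. The key algebraic input is the commutator identity
\[ [X, \mc{P}] = 2\mc{L}, \]
where $X$ denotes multiplication by $x$. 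A short computation confirms this: $[X, i\partial^2] = -2i\partial$, while $[X, \pm 2 q\partial C_+ \ol{q}] = \mp 2 q C_+ \ol{q}$ because $[X, \partial] = -1$ and the operator $[X, C_+]$ has range in the constants (so $\partial \circ [X, C_+] = 0$); summing recovers $2\mc{L}$. The Heisenberg evolution then integrates to $U(t)^* X U(t) = X + 2t \mc{L}_{q^0}$, so that
\[ (X + 2t\mc{L}_{q^0} - z)^{-1} = U(t)^* (X - z)^{-1} U(t). \]
Chaining these ingredients, for $\Im z > 0$,
\[ I_+\bigl\{ (X + 2t\mc{L}_{q^0} - z)^{-1} q^0 \bigr\} = I_+\bigl\{ (X - z)^{-1} q(t) \bigr\} = \int_\R \frac{q(t, x)}{x - z}\, dx = 2\pi i\, q(t, z), \]
the last equality being the Cauchy integral representation of the Hardy space function $q(t)$; this is \eqref{magic formula} for smooth data.

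The main obstacles all concern rigor and the passage to low-regularity data. First, the operator $X + 2t\mc{L}_{q^0}$ must be (essentially) self-adjoint so that its resolvent is bounded at non-real $z$; I expect this to follow from a Kato--Rellich type argument combined with the subcriticality $M(q^0) < M^*$, which gives good control on $\mc{L}_{q^0}$ via the sharp inequality \eqref{op est 4}. Second, the intertwining relation $X U(t) = U(t)(X + 2t\mc{L}_{q^0})$ initially holds only on a dense invariant subspace, and extending it to resolvents and to the $I_+$ pairing requires some functional-calculus care. The most delicate step, and the one I expect to be the main obstacle, is verifying that both sides of \eqref{magic formula} are continuous in $q^0 \in L^2_+$ at fixed $t$ and $z$: the left-hand side inherits continuity from well-posedness (Theorems~\ref{t:wpd} and \ref{t:wpf}) together with continuity of the Poisson extension to $\Im z > 0$, while the right-hand side requires norm-resolvent continuity of $q \mapsto \mc{L}_q$ throughout the subcritical regime.
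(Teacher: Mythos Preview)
Your overall strategy coincides with the paper's: establish \eqref{magic formula} for smooth well-decaying data via the Lax pair calculus (this is Theorem~\ref{T:smooth EF}), then pass to general $q^0\in L^2_+(\R)$ by density and continuity of both sides. The ingredients you list---the propagator $U(t)$, the relations $q(t)=U(t)q^0$ and $\mc L(t)=U(t)\mc L_{q^0}U(t)^*$, the commutator $[X,\mc P]=2\mc L$, and the fact that $\mc P1=0$ yields $I_+\circ U(t)^*=I_+$---are exactly those the paper assembles in Section~\ref{S:4}.

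There is, however, a genuine conceptual error in your functional-analytic framework. The operator $X$ is \emph{not} self-adjoint on $L^2_+(\R)$, and neither is $X+2t\mc L_{q^0}$; they are only maximally accretive. Multiplication by $x$ does not define a self-adjoint operator on Hardy space because $e^{-itx}$ fails to preserve $L^2_+(\R)$ for $t>0$; the paper defines $X$ as the generator of the contraction semigroup $f\mapsto C_+(e^{-itx}f)$, whose spectrum is the entire closed lower half-plane. Your hope that self-adjointness of $X+2t\mc L_{q^0}$ ``follows from a Kato--Rellich type argument'' is therefore off target, and in fact the paper explicitly records (just before Lemma~\ref{L:882}) that the accretive analogue of Kato--Rellich does not apply here either. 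Instead, $A(t,z;q)=(X+2t\mc L_q-z)^{-1}$ for general $q\in L^2_+$ is constructed in Proposition~\ref{t:A'} as a norm limit of resolvents $A(t,z;q_n)$ with $q_n\in L^\infty\cap L^2_+$, invoking the Trotter--Kato theory and a direct verification that the limiting pseudo-resolvent is injective with dense range.

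A second point you underestimate is the continuity of the right-hand side. Norm-resolvent continuity of $q\mapsto\mc L_q$ is not enough, because the functional $I_+$ is unbounded on $L^2_+(\R)$. The paper controls $I_+\circ A(t,z;q)$ via the resolvent identity \eqref{4.44}, the explicit bound \eqref{Schrod Poisson} on $I_+\circ A_0$, and the $L^2_+\to L^\infty_+$ mapping estimates \eqref{A 4}, \eqref{A 444} for $A(t,z;q)$; the limit then reduces to showing $\|A(t,z;q_n^0)-A(t,z;q^0)\|_{L^2_+\to L^\infty_+}\to 0$.
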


For the full definitions of the linear functional $I_+$ and of the operator $X$, see Section~\ref{S:4}.  Naively speaking, $I_+$ represents integration over the whole line, while $X$ generalizes the operator of multiplication by $x$.  Note that $I_+$ is not a bounded linear functional on $L^2_+(\R)$, nor can multiplication by $x$ be interpreted as a self-adjoint operator in Hardy space.  

It is worth recalling that an explicit form of the solution to the Calogero--Moser system \eqref{CM} was given long ago in \cite{MR455039}.  Concretely, it was shown that one may describe particles evolving according to this system as the eigenvalues of a straight line trajectory in the space of symmetric matrices.  This is evidently not a very close analogue of \eqref{magic formula}, which speaks directly to the subtlety of the manner in which \eqref{CM-DNLS} arises as a continuum limit of the particle system.

The analogue of \eqref{magic formula} appropriate to the periodic \eqref{CM-DNLS} was introduced in \cite{Badreddine2023} and played a central role in that analysis.  

In Theorem~\ref{T:smooth EF} we show that the explicit formula \eqref{magic formula} holds for solutions with smooth well-decaying initial data.  This is the form that we will employ in demonstrating scaling-critical well-posedness.  Once this is achieved, we may then extend this formula to all $L^2_+(\R)$ solutions by using the resulting continuity of the data-to-solution map in concert with the continuity of the right-hand side that will be demonstrated using the tools developed as a part of our analysis. 

\subsection*{Overview of the proofs} The central theme of this paper is to demonstrate how to prove well-posedness by synthesizing explicit representations of the type \eqref{magic formula} with the tools and techniques developed as part of the method of commuting flows \cite{MR4304314,harropgriffiths2022sharp,harropgriffiths2023global,Killip2023,MR4628747,MR3990604,MR3820439}.  In particular, the methods we employ to prove equicontinuty draw from these earlier works.  On the other hand, we will employ no regularized flows or commuting Hamiltonians in this paper.

The question central to $L^2_+(\R)$ well-posedness is this:  Does any sequence of smooth and well-decaying initial data $q^0_n$ that converges in $L^2_+(\R)$ lead to a sequence of solutions that converges in $C_t(I; L^2_+(\R))$ on any compact time interval $I$?

By the conservation of $M(q)$, such sequences of solutions will always admit subsequential limits in the weak topology (pointwise in time).  A first step forward is to show that there is a unique such subsequential limit and, correspondingly, one has weak convergence without passing to a subsequence.  This is one of the roles played by the explicit formula in~\cite{Badreddine2023} and in this paper; see Corollary~\ref{C:weak}.

To upgrade weak convergence to strong convergence one must preclude a loss of mass.  In the periodic setting of \cite{Badreddine2023}, this is a question of equicontinuity.  On the line, one must also demonstrate tightness.

Our route to controlling the equicontinuity properties of solutions is through the spectral theory of the Lax operator $\mc L$.  Recall that (formally at least) the Lax equation ensures that $\mc L_{q(t)}$ and $\mc L_{q(0)}$ remain unitarily equivalent.  In the periodic setting, the Lax operator has compact resolvent and so its spectral properties are encoded through the associated sequence of eigenvalues and their eigenvectors.  In \cite{Badreddine2023}, loss of mass is precluded by demonstrating continuity properties of these eigenvalues/vectors.

In the whole line setting, the spectral theory is more complicated; indeed, even the spectral type is unknown at this time.  In Section~\ref{S:2}, we construct the Lax operator as a self-adjoint operator for general $q\in L^2_+(\R)$.  This involves an improvement on the earlier analysis of \cite{GerardLenzmann}:  In Lemma~\ref{L:rc}, we prove that $\mc L_q$ is a relatively compact perturbation of the case $q\equiv 0$, rather than an infinitesimally \emph{form} bounded perturbation as in \cite{GerardLenzmann}. The advantages of this small improvement accumulate as we progress.

In Lemmas~\ref{t:beta} and ~\ref{L:conserv law}, we show that the difference between the resolvents
\begin{align*}
R(\kappa,q(t)) = (\mc L_{q(t)} + \kappa)^{-1} \qtq{and} R_0(\kappa):=(\mc L_0 + \kappa)^{-1} 
\end{align*}
is trace class; moreover, the trace is a conserved quantity.  Consequently, 
\begin{equation}\label{beta defn}
\beta(\kappa,q) := M(q) \mp 2\pi\kappa \tr\{ R(\kappa)-R_0(\kappa) \} 
\end{equation}
is also conserved.  In Section~\ref{S:3}, we use $\beta(\kappa,q)$ to prove our equicontinuity results.  

Lemma~\ref{L:equi via K} describes the connections between equicontinuity and the large-$\kappa$ behaviour of $\beta(\kappa,q)$.  We see that $\beta(\kappa,q)$ converges to zero uniformly on equicontinuous sets. However, to prove equicontinuity, we must show convergence of the \emph{quadratic} part of $\beta(\kappa,q)$.  By virtue of a favorable sign of the higher-order terms in $\beta(\kappa,q)$, this approach quickly yields equicontinuity in the defocusing case.

In the focusing case, the sign is unfavorable!  Nevertheless, by exploiting the inequality \eqref{op est 4} of \cite{GerardLenzmann} in concert with certain operator analysis (which appears to be novel), we are able to subordinate the higher-order terms in $\beta(\kappa,q)$ to the quadratic term and so deduce equicontinuity up to the $2\pi$ threshold.

The climax of Section~\ref{S:4} is the verification of the explicit formula \eqref{magic formula} for smooth and well-decaying initial data; see Theorem~\ref{T:smooth EF}.  Much of this section is devoted to developing the relevant operator theory in a manner that will allow us to consider $L^2_+$ limits of such solutions later.

In Section~\ref{S:5}, we complete the proof of well-posedness in $L^2_+(\R)$.  Recall that our goal is to show that $L^2_+(\R)$-Cauchy sequences of smooth well-decaying initial data produce solutions that converge in $C_tL^2_x$ on compact time intervals.  At this moment in the argument, we know weak convergence at each moment of time.  We upgrade this by demonstrating compactness.  This requires three inputs: equicontinuity and tightness of orbits as functions of $x$ and thirdly, $L^2_+(\R)$-equicontinuity in time.

Spatial equicontinuity is settled already in Section~\ref{S:3}.  As we will see, this equicontinuity is then used to aid in verifying the other two requirements.  See Lemma~\ref{L:equi in t} for the treatment of equicontinuity in time and Proposition~\ref{t:tight 2} for tightness.  To prove the latter, we rely again on the explicit formula.

Section~\ref{S:5} concludes with the proof of Theorem~\ref{t:magic formula}.  This combines the well-posedness proved earlier in the section with the operator theory developed in Section~\ref{S:4}.  

Finally, in Section~\ref{S:6}, we extend well-posedness to $H^s_+(\R)$ for $0\leq s<1$.  The cornerstone of this extension is the demonstration of equicontinuity of orbits in $H^s_+(\R)$.  Breaking from previous works, we introduce a method based on Loewner's Theorem on operator monotone functions (see the monograph \cite{MR3969971}).  We believe this represents an elegant and efficient approach to this question that will prove useful in treating other completely integrable systems.

\subsection*{Acknowledgements} R.K. was supported by NSF grant DMS-2154022;  M.V. was supported by NSF grant DMS-2054194.  The work of T.L. was also supported by these grants.

\subsection{Notation}
Throughout this paper, we employ the standard notation $A \lesssim B$ to indicate that $A \leq
CB$ for some constant $C > 0$; if $A \lesssim B$ and $B \lesssim A$, we write $A \approx B$.
Occasionally, we adjoin subscripts to this notation to indicate dependence of the constant
$C$ on other parameters; for instance, we write $A \lesssim_{\alpha, \beta} B$ when $A \leq
CB$ for some constant $C > 0$ depending on $\alpha, \beta$.

Hardy spaces will be denoted $L^p_+= L^p_+(\R)$.  These are the (closed) subspaces of $L^p(\R)$ comprised of those functions $f$ whose Poisson integral
\begin{align}\label{PIF11}
    f (z) :=  \int \frac{\Im z}{\pi |x-z|^2} f(x) \,dx \qtq{defined for} \Im z >0 
\end{align}
is holomorphic (in the upper half-plane).  By H\"older's inequality,
\begin{align}\label{266}
    \bigl| f (z) \bigr| \lesssim  (\Im z)^{-\frac1p}\| f \|_{L^p(\R)}   
\end{align}
for any $1\leq p \leq \infty$.

For \(s\in \R\) we define the Sobolev spaces $H^s(\R)$ as the completion of $\mathcal S(\R)$ with respect to the norm
\begin{equation*}
\norm{f}_{H^s(\R)}^2 = \int  (|\xi|+1)^{2s} |\wh{f}(\xi)|^2\,d\xi.
\end{equation*}
The Hardy--Sobolev spaces $H^s_+(\R)$ comprise those functions in $H^s(\R)$ whose Fourier transform is supported on $[0,\infty)$.

One important property of the Hardy--Sobolev spaces that is not enjoyed by the pure Sobolev spaces is that products are well defined even at negative regularity.   For example,
\begin{equation}\label{277}
\norm{fg}_{H^{-5}_+(\R)} \lesssim \norm{f}_{H^{-2}_+(\R)} \norm{g}_{H^{-2}_+(\R)} .
\end{equation}
The proof is elementary; see \cite[Lem.~2.2]{Killip2023} for details.

Our convention for the Fourier transform is
\begin{align}\label{FT}
\widehat f(\xi) = \tfrac{1}{\sqrt{2\pi}} \int_\R e^{-i\xi x} f(x)\,dx  \qtq{so that } f(x) = \tfrac{1}{\sqrt{2\pi}} \int_\R e^{i\xi x} \widehat f(\xi)\,d\xi.
\end{align}
With this definition, 
\begin{align}\label{268}
    \|f\|_{L^2(\mathbb R)}=\|\widehat f\|_{L^2(\mathbb R)} \qtq{and} \wh{fg}(\xi) = \tfrac{1}{\sqrt{2\pi}}\int \wh f(\xi-\eta)\wh g(\eta)\,d\eta. 
\end{align}

We use the following notation for the Cauchy--Szeg\H{o} projections
\begin{equation}\label{E:Cauchy-Szego}
\wh{C_\pm f}(\xi) := 1_{[0,\infty)}(\pm\xi) \wh{f}(\xi) .
\end{equation}

We will employ Littlewood--Paley decompositions with frequency parameters $N\in 2^\Z$. For a smooth, non-negative function \(\varphi\) supported on $|\xi|\leq 2$ with $\varphi(\xi)=1$ for $|\xi|\leq 1$, we define $P_{\leq N}$ as the Fourier multiplier with symbol $\varphi(\xi/N)$ and then $P_N = P_{\leq N} - P_{\leq N/2}$.  Observe that 
\[
1 = \sum_{N\in 2^\Z} P_N.
\]
We will often adopt the more compact notations $f_N=P_N f$ , $f_{\leq N}=P_{\leq N} f$, and $f_{>N} = [1 - P_{\leq N}]f$.

Throughout the paper, we write $\| \cdot\|_\op$ to denote the norm of an operator acting on the Hilbert space $L^2_+(\R)$.  Similarly, we reserve the notations $\I_1$ and $\I_2$ to denote the trace and Hilbert--Schmidt classes over this same Hilbert space.  For further information on such trace ideals, we recommend the book \cite{TraceIdeals}.

\section{The Lax operator}\label{S:2}

We write $\mc{L}_0 = |\partial| = -i\partial$.  For $\kappa>0$, the resolvent associated with $\mc{L}_0$ is
\begin{equation*}
R_0(\kappa) := (\mc{L}_0+\kappa)^{-1}.
\end{equation*}

In the focusing case, the $2\pi$ mass threshold marks an important transition in the spectral theory of the Lax operator $\mc L_q$ presented in \eqref{Lax pair}.  This particular number originates from the sharpness of the constant in the inequality
\begin{equation}\label{op est 4}
\norm{ C_+(\ol{q}f) }_{L^2} \leq \tfrac{1}{\sqrt{2\pi}} \norm{q}_{L^2} \bnorm{ |\partial|^{1/2} f }_{L^2} ,
\end{equation}
demonstrated already in \cite{GerardLenzmann}.

The inequality \eqref{op est 4} was used in \cite{GerardLenzmann} to construct the operator $\mc L_q$ as an infinitesimally form-bounded perturbation of $\mc L_0$.  This also allowed them to identify the form domain of $\mc L_q$.  Here we will prove the stronger result that $\mc L_q$ is a relatively compact perturbation of $\mc L_0$; this allows us to also identify the domain and the essential spectrum of $\mc L_q$.  The key input is the following:

\begin{lemma}\label{L:rc}  For any $\kappa>0$ and $g,q\in L^2_+(\R)$, the operator
\begin{equation}\label{rc}
 f \mapsto g C_+ \bigl[\ol q R_0(\kappa) f\bigr] \qtq{is compact and} \bigl\| g C_+ \ol q R_0(\kappa) \bigr\|_\op \lesssim \|g\|_{L^2} \|q\|_{L^2}.
\end{equation}
Moreover, if $Q\subseteq L^2_+(\R)$ is bounded and equicontinuous, then
\begin{align}\label{rc equi}
\sup_{q\in Q}\, \bnorm{ C_+\ol{q}\sqrt{R_0(\kappa)} }_{\op}^2 \leq \sup_{q\in Q} \, \bigl\| q C_+ \ol q R_0(\kappa) \bigr\|_\op \to 0 \qtq{as} \kappa\to\infty.
\end{align}
\end{lemma}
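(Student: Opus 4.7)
The plan has three components.

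For the bound and compactness: I would combine \eqref{op est 4} with the smoothing property of $R_0(\kappa)$. Since $R_0(\kappa):L^2_+\to H^1_+$ with $\|R_0 f\|_{L^\infty}\lesssim\kappa^{-1/2}\|f\|_{L^2}$ in one dimension, one has $\|\ol q R_0 f\|_{L^2}\lesssim\kappa^{-1/2}\|q\|_{L^2}\|f\|_{L^2}$, and \eqref{op est 4} yields $C_+(\ol q R_0 f)\in H^{1/2}_+$. The multiplication by $g\in L^2_+$ is handled by a dual form of \eqref{op est 4}: on the Fourier side,
\begin{equation*}
\wh{g\cdot w}(\sigma) = \tfrac{1}{\sqrt{2\pi}}\int_0^\sigma \wh g(\sigma-\eta)\,\wh w(\eta)\,d\eta, \qquad \sigma\geq 0,
\end{equation*}
and Cauchy--Schwarz combined with the decay of $\wh w$ from the $R_0$ factor gives the operator bound. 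For compactness, one first checks that $T = M_g C_+ M_{\ol q}R_0(\kappa)$ is Hilbert--Schmidt for $g, q\in\Schw\cap L^2_+$ by a direct Fourier-side kernel computation. The bilinearity of $T$ in $(g, q)$ together with the operator bound then exhibits general $T$ as an operator-norm limit of Hilbert--Schmidt (hence compact) operators.

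For the first inequality of \eqref{rc equi}: set $A = C_+\ol q\sqrt{R_0(\kappa)}$. Then $A^*A = \sqrt{R_0}\,q\,C_+\ol q\sqrt{R_0}$ is positive self-adjoint, and $\|A\|_\op^2 = \|A^*A\|_\op$ coincides with the spectral radius of $A^*A$. Writing $qC_+\ol q R_0 = XY$ with $X = qC_+\ol q\sqrt{R_0}$ and $Y = \sqrt{R_0}$, note that $YX = A^*A$. Since $XY$ and $YX$ share the same non-zero spectrum, $qC_+\ol q R_0$ has spectral radius equal to $\|A\|_\op^2$. As the spectral radius of any operator is bounded above by its operator norm, we conclude $\|A\|_\op^2\leq\|qC_+\ol q R_0\|_\op$.

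The principal obstacle is the uniform convergence as $\kappa\to\infty$. I would prove $\sup_{q\in Q}\|C_+\ol q\sqrt{R_0(\kappa)}\|_\op^2\to 0$ directly via a Littlewood--Paley split $q = q_{\leq N} + q_{>N}$. Using the Fourier identity
\begin{equation*}
\wh{C_+(\ol q\sqrt{R_0}\,h)}(\xi) = \tfrac{1}{\sqrt{2\pi}}\int_0^\infty \ol{\wh q(\tau)}\,\frac{\wh h(\xi+\tau)}{\sqrt{\xi+\tau+\kappa}}\,d\tau
\end{equation*}
and Cauchy--Schwarz in $\tau$, the $q_{>N}$ contribution is bounded by $\|q_{>N}\|_{L^2}^2\|h\|_{L^2}^2/(2\pi)$, which vanishes uniformly on $Q$ as $N\to\infty$ by \eqref{equicty 1}. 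The $q_{\leq N}$ piece, whose Fourier support lies in $[0,N]$, yields a bound $\lesssim \|q\|_{L^2}^2\|h\|_{L^2}^2\log(1+N/\kappa)$, vanishing as $\kappa\gg N$. The convergence for $\|qC_+\ol q R_0\|_\op$ then follows from the factorization $qC_+\ol q R_0 = (qC_+\ol q\sqrt{R_0})\sqrt{R_0}$ combined with $\|\sqrt{R_0}\|_\op = \kappa^{-1/2}$ and a uniform estimate $\|qC_+\ol q\sqrt{R_0}\|_\op\lesssim\|q\|_{L^2}^2$ coming from the dual of \eqref{op est 4}; the main technical subtlety is interpreting $qC_+\ol q$ as a bounded map $L^2\to H^{-1/2}$ so that the subsequent $\sqrt{R_0}$ returns us to $L^2$.
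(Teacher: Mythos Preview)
Your proposal has two genuine gaps.

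\textbf{The operator bound.} Your invocation of \eqref{op est 4} is backwards: that inequality gives an $L^2$ bound on $C_+(\ol q\,\cdot)$ in terms of the $\dot H^{1/2}$ norm of the \emph{input}, not an $H^{1/2}$ bound on the output. What is actually needed to multiply by $g\in L^2$ and land back in $L^2$ is an $L^\infty$ bound on $w:=C_+(\ol q R_0 f)$, equivalently $\wh w\in L^1$. Your proposed ``Cauchy--Schwarz plus decay of $\wh w$'' does not deliver this: from $\wh w(\xi)=(2\pi)^{-1/2}\int_\xi^\infty \overline{\wh q(\eta-\xi)}(\eta+\kappa)^{-1}\wh f(\eta)\,d\eta$, a direct Cauchy--Schwarz yields only $|\wh w(\xi)|\lesssim (\xi+\kappa)^{-1}\|q\|_{L^2}\|f\|_{L^2}$, which is not in $L^1$. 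The paper closes this by first bounding $\int_h^\infty|\wh w|\,d\xi$ via Fubini and the Hardy--Littlewood maximal function, obtaining $\|\wh w\|_{L^1}\lesssim \|\hlm\wh q\|_{L^2}\|f\|_{L^2}$. This same maximal-function estimate, applied with $h>0$, then also provides the Fourier-side tightness used in the paper's compactness proof; your alternative compactness argument (Hilbert--Schmidt for Schwartz $g,q$ and approximation) is valid but still presupposes the operator bound.

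\textbf{Convergence of $\|qC_+\ol q R_0(\kappa)\|_\op$.} Your spectral-radius argument for the inequality $\|C_+\ol q\sqrt{R_0}\|_\op^2\leq \|qC_+\ol q R_0\|_\op$ is correct and equivalent to the paper's interpolation step. Your direct high-low proof that $\|C_+\ol q\sqrt{R_0}\|_\op^2\to 0$ uniformly on $Q$ is also fine. But the lemma asserts convergence of the \emph{larger} quantity $\|qC_+\ol q R_0\|_\op$, and your route to this---via the factorization and a claimed bound $\|qC_+\ol q\sqrt{R_0}\|_\op\lesssim\|q\|_{L^2}^2$---is exactly the ``technical subtlety'' you flag but do not resolve. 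The interpretation of $qC_+\ol q$ as bounded $L^2\to H^{-1/2}$ with norm $\lesssim\|q\|_{L^2}^2$ is not an immediate consequence of \eqref{op est 4} or its dual: the dual of \eqref{op est 4} gives $M_q:L^2_+\to \dot H^{-1/2}$, but composing with $C_+\ol q$ requires first putting $C_+(\ol q f)$ in $L^2$ for $f\in L^2$, which fails in general. The paper sidesteps this entirely by running the high-low split directly on $qC_+\ol q R_0$: the pieces $q_{>N}C_+\ol q R_0$ and $q_{\leq N}C_+\ol q_{>N}R_0$ are handled by the already-established bound \eqref{rc}, while $q_{\leq N}C_+\ol q_{\leq N}R_0$ succumbs to Bernstein and $\|R_0\|_\op=\kappa^{-1}$.
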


\begin{proof}
Given $\xi>0$, we may use \eqref{268} to see that
\begin{align*}
\bigl| \widehat{ \bar q R_0 f }(\xi)  \bigr| \leq \tfrac{1}{\sqrt{2\pi}} \int_\xi^\infty \bigl| \wh q(\eta-\xi) \bigr| \bigl| \wh f (\eta)\bigr| \,\frac{d\eta}{\eta}, 
\end{align*}
for any $\kappa>0$, and consequently,
\begin{align}\label{1477}
\int_h^\infty \bigl| \widehat{ \bar q R_0 f }(\xi)  \bigr| \,d\xi
	\lesssim  \int_h^\infty \tfrac{1}{\eta}\int_0^\eta \bigl| \wh q (\zeta)\bigr|\,d\zeta \bigl| \wh f (\eta)\bigr| \,d\eta
		\lesssim \| \hlm \wh q\|_{L^2([h,\infty))} \| f\|_{L^2(\R)},
\end{align}
where $\hlm$ denotes the Hardy--Littlewood maximal function.

Choosing $h=0$ in \eqref{1477}, we find that
\begin{align*}
\int_E  \bigl| [g C_+ (\ol q R_0 f )](x)  \bigr|^2 dx
	\leq  \| g \|_{L^2(E)}^2 \| C_+ (\ol q R_0 f )\|_{L^\infty(\R)}^2
	\lesssim \| g \|_{L^2(E)}^2  \| q \|_{L^2(\R)}^2 \| f \|_{L^2(\R)}^2,
\end{align*}
uniformly for $E\subseteq\R$.  Taking $E=\R$, this shows that the operator admits the norm bound stated in \eqref{rc}.  By choosing $E$ of the form $\{x: |x| >r\}$, we also see that the image of the unit ball in $L^2_+(\R)$ under the operator $g C_+ \ol q R_0(\kappa)$ is tight in $L^2(\R)$.

As a second application of \eqref{1477} together with considerations of the support of a convolution, we get
\begin{align*}
\int_{2h}^\infty \bigl| [g C_+ (\ol q R_0 f)]&\widehat{\ }(\xi)  \bigr|^2 \,d\xi \\
&\leq  \| \wh g \|_{L^2([h,\infty))}^2 \| \widehat{ \ol q R_0 f } \|_{L^1([0,\infty))}^2
	+ \| \wh g \|_{L^2(\R)}^2 \| \widehat{ \ol q R_0 f } \|_{L^1([h,\infty))}^2 \\
&\lesssim \Bigl[ \| \wh g \|_{L^2([h,\infty))}^2 \| \wh q \|_{L^2(\R)}^2 + \| \wh g \|_{L^2(\R)}^2 \| \hlm \wh q \|_{L^2([h,\infty))}^2  \Bigr]
	\| f \|_{L^2(\R)}^2 
\end{align*}
and so the image of the unit ball in $L^2_+(\R)$ under $g C_+ \ol q R_0(\kappa)$ is also tight on the Fourier side. 

In conclusion, the image of the unit ball  in $L^2_+(\R)$ under the operator $g C_+ \ol q R_0(\kappa)$ is precompact in $L^2(\R)$ and so this is a compact operator.

We now turn our attention to property \eqref{rc equi} of the operator $q C_+ \ol q R_0(\kappa)$.  Each $q\in Q$ may be decomposed in frequency as $q=q_{\leq N}+q_{> N}$.  Accordingly,
\begin{equation*}
q C_+ \ol q R_0(\kappa) = q_{> N} C_+ \ol q R_0(\kappa) + q_{\leq N} C_+ \ol q_{>N} R_0(\kappa) + q_{\leq N} C_+ \ol q_{\leq N} R_0(\kappa).
\end{equation*}
The first two summands are treated using \eqref{rc}:
\begin{equation*}
\bigl\| q_{> N} C_+ \ol q R_0(\kappa) + q_{\leq N} C_+ \ol q_{>N} R_0(\kappa) \bigr\|_\op \lesssim \|q\|_{L^2} \|q_{>N}\|_{L^2}.
\end{equation*}
As $Q$ is equicontinuous, choosing $N$ large makes this small uniformly for all $q\in Q$.

Using Bernstein's inequality, we may bound the last summand as follows:
\begin{equation*}
\bigl\| q_{\leq N} C_+ \ol q_{\leq N} R_0(\kappa) \bigr\|_\op \lesssim \kappa^{-1} \|q_{\leq N}\|_{L^\infty}^2
	\lesssim \kappa^{-1} N \|q\|_{L^2}^2.
\end{equation*}
Irrespective of how $N$ is chosen, this can be made small by choosing $\kappa$ large.

By employing $T^*T$ arguments and complex interpolation, we find
\begin{align*}
\bnorm{ C_+\ol q \sqrt{R_0} }_\op^4 = \bnorm{ \sqrt{R_0} q C_+\ol q \sqrt{R_0} }_\op^2
	\leq \bnorm{ R_0 q C_+\ol q  }_\op \bnorm{ q C_+\ol q R_0 }_\op = \bnorm{ q C_+\ol q R_0 }_\op ^2 .
\end{align*}
In particular, this norm also converges to zero as $\kappa\to\infty$, uniformly for $q\in Q$.
\end{proof}

\begin{proposition}[Lax operator]\label{P:Lax}
For $q\in L^2_+(\R)$, the operator
\begin{equation*}
\mc L_q f = - i f' \mp q C_+(\ol q f) 
\end{equation*}
with domain $D(\mc L_q)=H^1_+(\R)$ is self-adjoint and $\sigma_\ess(\mc L_q) = [0,\infty)$.  Moreover, the mapping $q\mapsto \mc L_q$ is continuous in the norm resolvent topology. 

If $Q\subset L^2_+(\R)$ is bounded and equicontinuous, then there exists $\kappa_0=\kappa_0(Q)>0$ so that
\begin{equation}\label{form comp} 
 \tfrac12  (\mc L_0 + \kappa)   \leq  (\mc L_q + \kappa)  \leq \tfrac32 (\mc L_0 + \kappa), \\
\end{equation}
as quadratic forms, whenever $\kappa\geq \kappa_0;$ moreover,
\begin{equation}\label{equiv norms} 
\bigl\| (\mc L_q + \kappa)^{s} f \bigr\|_{L^2} \approx \bigl\| (\mc L_0+ \kappa)^{s} f \bigr\|_{L^2}
\end{equation}
uniformly for $q\in Q$, $\kappa\geq \kappa_0$, and $-1\leq s \leq 1$.
\end{proposition}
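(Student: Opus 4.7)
My plan is to deploy Lemma~\ref{L:rc} as the single workhorse, treating $V_q := \mp qC_+\ol q$ as a relatively compact perturbation of $\mc L_0$. Symmetry of $V_q$ is immediate from $C_+=C_+^*$ on $L^2(\R)$, and applying \eqref{rc equi} to the singleton $\{q\}$ (trivially bounded and $L^2$-equicontinuous) gives $\|V_q R_0(\kappa)\|_\op \to 0$ as $\kappa\to\infty$. This is the Kato--Rellich hypothesis with relative bound zero, making $\mc L_q=\mc L_0+V_q$ self-adjoint on $D(\mc L_0)=H^1_+(\R)$. For the essential spectrum, I would fix $\kappa$ large enough that $\|V_q R_0(\kappa)\|_\op<\tfrac12$; the Neumann series $R(\kappa,q)=R_0(\kappa)(1+V_q R_0(\kappa))^{-1}$ then converges, and the second resolvent identity $R(\kappa,q)-R_0(\kappa)=-R_0(\kappa)V_q R(\kappa,q)$ exhibits the resolvent difference as compact, since $R_0(\kappa)V_q=(V_q R_0(\kappa))^*$ is compact by Lemma~\ref{L:rc}. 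Weyl's essential spectrum theorem then yields $\sigma_\ess(\mc L_q)=\sigma_\ess(\mc L_0)=[0,\infty)$.

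Norm resolvent continuity in $q$ follows from the same algebra: bilinearity writes $V_q-V_{q'}=\mp[(q-q')C_+\ol q+q'C_+\ol{(q-q')}]$, and Lemma~\ref{L:rc} bounds each summand to give
\[
\bigl\|(V_q-V_{q'})R_0(\kappa)\bigr\|_\op \lesssim \|q-q'\|_{L^2}\bigl(\|q\|_{L^2}+\|q'\|_{L^2}\bigr).
\]
Inserting this into $R(\kappa,q)-R(\kappa,q')=R(\kappa,q)(V_{q'}-V_q)R(\kappa,q')$, together with the uniform Neumann-series bound on $R(\kappa,q')$ for $q'$ in an $L^2$-neighborhood of $q$, delivers the required norm convergence.

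For the uniform statements on bounded equicontinuous $Q$, I would select $\kappa_0(Q)$ via \eqref{rc equi} so that $\|C_+\ol q\sqrt{R_0(\kappa)}\|_\op^2\leq\tfrac12$ uniformly for $q\in Q$ and $\kappa\geq\kappa_0$. The Fourier-orthogonality identity $\int(C_+h)\ol h\,dx=\|C_+h\|_{L^2}^2$ applied with $h=\ol qf$ collapses the quadratic form to
\[
\langle V_qf,f\rangle = \mp\|C_+(\ol qf)\|_{L^2}^2,
\]
and factoring $C_+(\ol qf)=[C_+\ol q\sqrt{R_0(\kappa)}]\sqrt{\mc L_0+\kappa}\,f$ converts this into $|\langle V_qf,f\rangle|\leq\tfrac12\langle(\mc L_0+\kappa)f,f\rangle$, which is \eqref{form comp}. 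For \eqref{equiv norms}, the case $s=1$ follows from $\|V_q R_0(\kappa)\|_\op\leq\tfrac12$ and the triangle inequality applied to $\mc L_q+\kappa=(\mc L_0+\kappa)+V_q$, while Loewner--Heinz promotes \eqref{form comp} to an operator comparison of $s$-th powers for $s\in[0,1]$, yielding norm equivalence directly for $s\in[0,\tfrac12]$. The range $s\in(\tfrac12,1)$ I expect to be the main technical step, since $\mc L_q$ and $\mc L_0$ do not commute and $x^s$ is not operator monotone beyond $[0,1]$; I plan to cover it by complex interpolation between the $s=\tfrac12$ and $s=1$ endpoints, using the standard identification $D(A^s)=[L^2_+,D(A)]_s$ for positive self-adjoint $A$. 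The range $s\in[-1,0]$ then follows by $L^2_+$-duality.
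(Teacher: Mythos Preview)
Your approach is correct and closely mirrors the paper's: Kato--Rellich via \eqref{rc equi} on the singleton $\{q\}$, Weyl's theorem via compactness of the resolvent difference, norm resolvent continuity via the second resolvent identity and the bilinear splitting of $V_q-V_{q'}$, and the form bound \eqref{form comp} via $\|C_+\ol q\sqrt{R_0(\kappa)}\|_\op^2\leq\tfrac12$. These are all exactly the moves the paper makes.

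The one place you diverge is in \eqref{equiv norms} for $s\in(\tfrac12,1)$, which you flag as ``the main technical step'' and propose to handle by a two-stage argument: Loewner--Heinz on \eqref{form comp} for $s\in[0,\tfrac12]$, then complex interpolation between the $s=\tfrac12$ and $s=1$ endpoints. This works, but the concern is misplaced. The paper's route is simpler: having established the $s=1$ norm equivalence from $\|V_qR_0(\kappa)\|_\op\leq\tfrac12$ (as you do), it interpolates directly between $s=0$ (trivial) and $s=1$ using the very identification $D(A^s)=[L^2_+,D(A)]_s$ that you invoke. There is no barrier at $s=\tfrac12$; the full range $s\in[0,1]$ comes in a single interpolation step, and Loewner--Heinz is not needed at all. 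Duality then gives $s\in[-1,0)$, as you say.
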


\begin{proof}
Compactness of the operator \eqref{rc} guarantees that $q C_+ \bar q$ is infinitesimally $\mc L_0$-bounded.  Thus, self-adjointness and semi-boundedness follow from the Kato--Rellich Theorem \cite[\S X.2]{Reed1975}.  Through Weyl's Theorem, \cite[\S XIII.4]{Reed1978}, compactness of the operator also shows that the essential spectrum agrees with that of $\mc L_0$.

By \eqref{rc equi}, we may choose $\kappa_0=\kappa_0(Q)>0$ so that $\kappa\geq\kappa_0$ implies
\begin{align*}
\bigl\langle f, q C_+( \ol q f)\bigr\rangle  = \bigl\langle C_+(\ol qf), C_+ (\ol q f)\bigr\rangle
	&\leq \bigl\| C_+ \ol q \sqrt{R_0} \bigr\|_{L^2}^2  \bigl\| \sqrt{\mc L_0 + \kappa}\, f \bigr\|_{L^2}^2\\
		&\leq \tfrac12 \bigl\langle f, (\mc L_0 + \kappa) f  \bigr\rangle,
\end{align*}
and also
\begin{equation*}
\bigl\| q C_+(\ol q f)  \bigr\|_{L^2} \leq \bigl\| q C_+ \ol q R_0(\kappa) \bigr\|_\op \bigl\| (\mc L_0 + \kappa) f  \bigr\|_{L^2}
	\leq \tfrac12 \bigl\| (\mc L_0 + \kappa) f  \bigr\|_{L^2} .
\end{equation*}
The former inequality proves \eqref{form comp}.  The latter shows that
\begin{equation*}
\tfrac14 \bigl\| (\mc L_0 + \kappa) f  \bigr\|^2_{L^2}\leq \bigl\| (\mc L_q + \kappa) f  \bigr\|^2_{L^2}\leq \tfrac94 \bigl\| (\mc L_0 + \kappa) f  \bigr\|^2_{L^2},
\end{equation*}
which proves the $s=1$ case of \eqref{equiv norms}.

Noting that $s=0$ is trivial, \eqref{equiv norms} follows for all $s\in[0,1]$ by (real or complex) interpolation.  Negative values of $s$ then follow by duality.

To prove that $q\mapsto \mc L_q$ is norm-resolvent continuous, we first choose $q_n\to q$ in $L^2_+$ and then choose $\kappa$ large enough so that \eqref{equiv norms} holds for $q$ and for all $q_n$.  In particular, $\kappa$ belongs to the resolvent set of $\mc L_q$ and all $\mc L_{q_n}$. By the resolvent identity, \eqref{equiv norms}, and \eqref{rc},
\begin{align*}
\|R(\kappa, q_n) - R(\kappa,q)\|_{\op} &\lesssim \|R(\kappa,q_n) [q_nC_+\ol q_n - qC_+\ol q]R(\kappa, q)\|_{\op}\\
&\lesssim \|R_0(\kappa) [q_nC_+\ol q_n - qC_+\ol q]R_0(\kappa)\|_{\op}\\
&\lesssim \|q_n-q\|_{L^2}(\|q_n\|_{L^2} + \|q\|_{L^2})\|R_0(\kappa)\|_\op,
\end{align*}
which converges to zero as $n\to \infty$.
\end{proof}

The operator $\mc P(q)$ from the Lax pair \eqref{Lax pair} plays a less central role in our analysis than $\mc L_q$.  Our next proposition constructs a unitary transformation built from the associated differential equation \eqref{Pode}.  It will suffice for us to work with smooth solutions $q(t)$ and so avoid any functional-analytic subtleties of the type addressed in the preceding proposition.

\begin{proposition}\label{P:Pflow}
Let $q(t)$ be a global $H^\infty_+(\R)$ solution of \eqref{CM-DNLS}.  For all $t_0\in\R$ and all  $\psi_0\in L^2_+(\R)$, the initial value problem
\begin{align}\label{Pode}
\tfrac{d }{dt} \psi(t) = \mc P(q(t)) \psi(t) \qtq{with} \psi(t_0)=\psi_0
\end{align}
admits a unique $C_tL^2_+\cap C_t^1 H^{-2}_+$ solution; this is global in time.   Moreover, for each $t\in\R$ the mapping $U(t;t_0):\psi_0\mapsto\psi(t)$ is unitary on $L^2_+$,
\begin{gather}\label{L conj}
q(t)=U(t;t_0)q(t_0), \qtq{and} \mc L(q(t)) = U(t;t_0) \mc L(q(t_0)) U^*(t;t_0).
\end{gather}
If $\psi_0\in H^\infty_+(\R)$, then so too is $\psi(t)$ for all $t\in \R$.  Finally, if $\psi_0\in H^\infty_+(\R)$, $\langle x\rangle \psi_0\in L^2(\R)$, and $\langle x\rangle q(0)\in L^2(\R)$, then $\langle x\rangle \psi(t)\in L^2(\R)$ for all $t\in \R$. 
\end{proposition}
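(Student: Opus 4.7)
The plan is to treat \eqref{Pode} as a time-dependent skew-adjoint evolution on $L^2_+(\R)$ generated by $\mc P(q(t))$, whose coefficients vary smoothly in $t$ because $q\in C_tH^\infty_+$. First I would show that, with domain $D(\mc P(q)) = H^2_+(\R)$, the operator $\mc P(q)$ is skew-adjoint for every $q\in H^\infty_+$. The principal part $i\partial^2$ is obviously skew-adjoint. For the first-order perturbation $\pm 2q\partial C_+\ol q$, the identity $\partial C_+ = C_+\partial$ (both are Fourier multipliers), together with the routine adjoint computation $(q\partial C_+\ol q)^* = qC_+(-\partial)\ol q = -q\partial C_+\ol q$, shows the perturbation is formally skew-symmetric on $L^2(\R)$. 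Since it is first order with smooth, bounded coefficients built out of $q$, it is $\partial^2$-bounded with arbitrarily small relative bound, and Kato--Rellich applied to $i\mc P$ upgrades the formal statement to essential self-adjointness on $H^2_+(\R)$.

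With skew-adjointness in hand, the family $\{\mc P(q(t))\}$ meets the hypotheses of Kato's theorem on abstract non-autonomous linear evolutions with a common domain: each generator is skew-adjoint on the time-independent domain $H^2_+$, and $q\in C^1_tH^\infty_+$ yields continuous differentiability of $\mc P(q(t))$ as a map $H^2_+\to L^2_+$. This produces a strongly continuous unitary propagator $U(t;t_0)$ on $L^2_+(\R)$, solving \eqref{Pode} strongly on $H^2_+$; approximation gives the asserted $C_tL^2_+\cap C^1_tH^{-2}_+$ regularity for general $L^2_+$ data, and unitarity reduces to $\tfrac{d}{dt}\|\psi\|^2 = \langle\psi,(\mc P+\mc P^*)\psi\rangle = 0$. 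The identity $q(t)=U(t;t_0)q(t_0)$ is then immediate from \eqref{one true P} and uniqueness; for the Lax identity in \eqref{L conj} I set $A(t):=U^*(t;t_0)\mc L(q(t))U(t;t_0)$ and use $\dot U=\mc P U$, $\dot U^*=-U^*\mc P$ to get $\dot A = U^*\bigl(\partial_t\mc L - [\mc P,\mc L]\bigr)U$, which vanishes by the Lax pair commutator relation noted in the introduction.

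Propagation of $H^\infty_+$ regularity is by iterated commutator estimates: commuting $(\mc L_0+\kappa)^n$ past $\mc P(q(t))$ produces an operator of order at most $n+1$ with smooth bounded coefficients built from $q(t)$, so a Gronwall estimate on $\|(\mc L_0+\kappa)^n\psi(t)\|_{L^2}^2$ propagates each $H^n_+$ norm on compact time intervals. The weighted estimate is the genuine obstacle. Setting $F(t)=\|x\psi(t)\|_{L^2}^2$ and exploiting that $\mc P$ is formally skew-adjoint on the full $L^2(\R)$, I obtain $F'(t)=\langle\psi,[x^2,\mc P]\psi\rangle$ with no boundary contribution. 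The piece $[x^2,i\partial^2]=-2i-4ix\partial$ is harmless, giving a term controlled by $\|x\psi\|\cdot\|\psi\|_{H^1}$. For $[x^2,q\partial C_+\ol q]$ the delicate object is $[x,C_+]$; on the Fourier side $C_+$ is multiplication by $1_{\xi>0}$ and $x$ acts as $i\partial_\xi$, so $[x,C_+]f$ picks up a multiple of $\widehat f(0)\cdot 1$, a \emph{constant} function. The outer $\partial$ annihilates this constant outright, leaving only a manageable residual of the form $\tfrac{iq}{2\pi}\langle q,\psi\rangle$ and a term $-2xq\,C_+(\ol q\psi)$; both are bounded by $\|C_+(\ol q\psi)\|_{L^2}\leq\|q\|_{L^2}\|\psi\|_{L^\infty}$ and Sobolev embedding, producing $|F'(t)|\lesssim A(t)\sqrt{F(t)}+B(t)$ with $A,B$ depending only on $\|\psi(t)\|_{H^1}$, $\|q(t)\|_{H^1}$, and $\|xq(t)\|_{L^2}$. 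To close, one first runs the identical argument with $\psi=q$, whose weighted norm Gronwalls from $\langle x\rangle q(0)\in L^2$ using only $\|q(t)\|_{H^1}$-type quantities; feeding the resulting bound on $\|xq(t)\|_{L^2}$ back into the estimate for general $\psi$ closes the Gronwall on any compact time interval. The real challenge throughout is justifying the commutator manipulations in genuine function spaces, which is where the observation that $\partial$ kills the constant output of $[x,C_+]$ does the heavy lifting.
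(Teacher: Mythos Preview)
Your route is sound and genuinely different from the paper's. Rather than invoking Kato's abstract non-autonomous evolution theorem, the paper regularizes $\mc P$ by replacing $\partial$ with $\tfrac{\partial}{1-\eps^2\partial^2}$, solves the resulting bounded-perturbation problem by contraction mapping in $C_tL^2_+$ (using Duhamel around $e^{it\Delta}$), obtains $H^n$-bounds uniform in $\eps$ by Gronwall, and then proves Cauchy convergence as $\eps\to0$ by directly comparing two regularizations. This is more hands-on and avoids any black box; your approach is more conceptual but requires checking Kato's hypotheses carefully. Both establish \eqref{L conj} and persistence of regularity essentially as you describe. For the weighted estimate, the paper proceeds similarly in spirit (Gronwall on $\|\langle x\rangle\psi\|^2$ after first controlling $\|\langle x\rangle q(t)\|$), but justifies everything by working with bounded approximating weights rather than manipulating $[x,C_+]$ formally; for $q$ itself it derives the clean identity $\tfrac{d}{dt}\!\int w|q|^2=\int w'\bigl[2\Im(\ol q q')\mp|q|^4\bigr]$ directly from \eqref{CM-DNLS}.

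One point you should not leave implicit: uniqueness in the weak class $C_tL^2_+\cap C^1_tH^{-2}_+$ does not follow from the naive energy identity, since the pairing $\langle\psi,\dot\psi\rangle$ is undefined when $\psi\in L^2_+$ and $\dot\psi\in H^{-2}_+$. Kato's theorem gives uniqueness for strong solutions taking values in the domain, not for this larger class. The paper closes this with a short duality argument: for any $\phi_1\in H^\infty_+$, the smooth solution $\phi(t)=U(t;t_1)\phi_1$ (available by the regularity you propagate) makes $\langle\phi(t),\psi(t)-\tilde\psi(t)\rangle$ constant in $t$, forcing $\psi=\tilde\psi$. This fits seamlessly into your framework, but it should be stated.
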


\begin{proof}
Given $\eps>0$, we consider the regularized initial value problem
\begin{align}\label{Pode eps}
\tfrac{d }{dt} \psi_\eps(t) = i\partial^2 \psi_\eps(t)  \pm 2q(t) \tfrac{\partial}{1-\eps^2\partial^2} C_+\ol{q}(t) \psi_\eps(t) \qtq{with} \psi_\eps(t_0)=\psi_0 .
\end{align}
This is readily solved locally in time by employing Duhamel's formula,
\begin{align}\label{Pode eps'}
\psi_\eps(t) = e^{i(t-t_0)\Delta} \psi_\eps(t_0) \pm 2 \int_{t_0}^t e^{i(t-s)\Delta}  q(s) \tfrac{\partial}{1-\eps^2\partial^2} C_+\ol{q}(s) \psi_\eps(s)\,ds,
\end{align}
and contraction mapping in $C_t L^2_+$.

For any such solution, $e^{-i(t-t_0)\Delta} \psi_\eps(t)$ belongs to $C^1_t L^2_+$ and so one may readily verify that the $L^2$ norm is conserved.  This in turn yields global well-posedness by iterated contraction mapping. 

The $L^2$ conservation law also implies that the flow maps $U_\eps(t;t_0)$ are isometries.  Moreover, the uniqueness of solutions (guaranteed by contraction mapping) shows that $U_\eps(t;t_0)$ and $U_\eps(t_0;t)$ are inverses of one another and so both are unitary.

Proceeding by induction, taking spatial derivatives of \eqref{Pode eps'} and applying the Gronwall inequality, we find that for each $n\in \N$ and each $T>0$,
\begin{align}\label{1066}
 \| \psi_\eps(t) \|_{H^n_+} \leq C_n(q,T) \| \psi_0 \|_{H^n_+}  \qtq{for all} t,t_0\in [-T,T] .
\end{align}
Note that the constant $C_n(q,T)$ does not depend on $\eps>0$.  

Let us now compare differing regularizations, beginning with the case $\psi_0\in H^2_+$. Given $0<\eps<\eta$, taking the time derivative and integrating by parts to exploit the antisymmetry of the regularization, we obtain
\begin{align*}
\tfrac{d}{dt}\| \psi_\eta(t) - \psi_\eps&(t)\|_{L^2}^2 \\ &\lesssim \|\psi_\eta(t) - \psi_\eps(t) \|_{L^2} 
 	\bigl\| q(t) \tfrac{(\eta^2 -\eps^2)\partial^3}{(1-\eta^2\partial^2)(1-\eps^2\partial^2)} C_+\ol q(t)[\psi_\eta(t) + \psi_\eps(t)] \bigr\|_{L^2} \\
&\lesssim \eta \|\psi_\eta(t) - \psi_\eps(t) \|_{L^2} \| q(t) \|_{H^2}^2
 	\bigl\| \psi_\eta(t) + \psi_\eps(t) \bigr\|_{H^2}  .
\end{align*}
Combining this, \eqref{1066}, and Gronwall's inequality, we deduce that for each $T>0$,
\begin{align*}
 \| \psi_\eta(t) - \psi_\eps(t) \|_{L^2}^2 &\lesssim  \eta C( q , T ) \bnorm{ \psi_0 }_{H^2}^2 \qtq{for all} t,t_0\in [-T,T] .
\end{align*}
This proves that  $\psi_\eps(t)$ converges in $C_t([-T,T];L^2_+)$ as $\eps\to 0$ for any choice of $T>0$ and any $\psi_0\in H^2_+$.

Unitarity of $U_\eps(t;t_0)$ together with the density of $H^2$ show that this convergence carries over to every $T>0$ and every $\psi_0\in L^2_+$.  Note that the limiting functions $\psi(t)$ satisfy
\begin{align}\label{Pode'}
\psi(t) = e^{i(t-t_0)\Delta} \psi(t_0) \pm 2 \int_{t_0}^t e^{i(t-s)\Delta}  q(s) \partial C_+\ol{q}(s) \psi(s)\,ds,
\end{align}
and so are $C_tL^2_+\cap C^1_t H^{-2}_+$ solutions to \eqref{Pode}; moreover, these solutions inherit conservation of $L^2$ and the persistence of regularity estimate \eqref{1066}.
 
Earlier we saw that $U_\eps(t;t_0) U_\eps(t_0;t) = \Id$.  Sending $\eps\to 0$, we deduce that $U(t;t_0)$ is unitary for all $t,t_0$.

Suppose (toward a contradiction) that \eqref{Pode} admits two $C_t L^2_+$ solutions $\psi(t)$ and $\widetilde \psi(t)$ that differ at some time $t_1\in\R$. Evidently, there is a choice of $\phi_1\in H^\infty_+$ so that $\langle \phi_1, \psi(t_1) - \widetilde \psi(t_1)\rangle \neq 0$.  By the analysis presented above, we can find a $C_t H^\infty_+$ solution $\phi(t)=U(t;t_1)\phi_1$. Integrating by parts (to exhibit the antisymmetry of $\mc P$),  it follows that $\frac{d}{dt} \langle \phi(t),\,\psi(t)-\widetilde\psi(t)\rangle = 0$.  This yields a contradiction: 
$$
0 \neq \langle \phi_1, \psi(t_1) - \widetilde \psi(t_1)\rangle = \langle \phi(t_0),\,\psi(t_0)-\widetilde\psi(t_0)\rangle =0.
$$

We have now completed the proof of existence, uniqueness, and persistence of regularity for solutions to \eqref{Pode}, and unitarity of the associated propagator.

We now turn to \eqref{L conj}.  As noted in the introduction, one of the favorable attributes of our choice of $\mc P$ is that $\frac{d}{dt} q = \mc P(q) q$. The uniqueness of such solutions demonstrated above shows that $q(t) = U(t;t_0) q(t_0)$.  

For any $\phi_0\in H^\infty_+(\R)$, the fact that $\mc L$ and $\mc P$ form a Lax pair then shows that
\begin{align*}
t\mapsto \bigl[ \mc L(q(t)) U(t;t_0) - U(t;t_0) \mc L(q(t_0))\bigr]  \phi_0
\end{align*}
constitutes a solution to \eqref{Pode} with zero initial data at time $t=t_0$.  Uniqueness of such solutions combined with unitarity of $U(t;t_0)$ then proves \eqref{L conj}.

It remains to discuss weighted estimates.  We begin with $q(t)$ itself and assume that $\langle x\rangle q^0(x)\in L^2(\R)$.  Integration by parts shows that
\begin{align}\label{1066 xq}
 \frac{d}{dt} \int w(t) |q(t,x)|^2\,dx = \int w'(x) \bigl[ 2\Im(\ol q q') \mp |q|^4\bigr](t,x)\,dx
\end{align}
for any smooth and bounded weight $w$.  Observe that the rate of growth is mediated by the size of the derivative of $w$, as well as the assumed $H^1$ bounds on $q(t)$.  In this way, using a sequence of bounded approximating weights and a Grownwall argument, we obtain a quantitative bound on the $L^2(\R)$ norm of $\langle x\rangle q(t,x)$. 

The weighted norms of $\psi(t)$ do not follow such a simple evolution as \eqref{1066 xq}.  Nevertheless, proceeding in the same vein, we verify that
\begin{align}\label{1066 xpsi}
 \Bigl| \tfrac{d}{dt} \bigl\|\langle x\rangle \psi\bigr\|_{L^2}^2 \Bigr|
 	\lesssim \|\langle x\rangle \psi\|_{L^2} \|\psi\|_{H^1}
		+ \|\langle x\rangle q \|_{L^2} \|\langle x\rangle \psi\|_{L^2}\| q\|_{H^1}\|\psi\|_{H^1}
\end{align}
and then deduce that $\langle x\rangle \psi(t,x)\in L^2(\R)$ for all time via Gronwall.
\end{proof}

It is not difficult to see that the arguments just presented also prove an analogue of Proposition~\ref{P:Pflow} in the case that $q(t)$ is merely a local-in-time smooth solution.  We do not need such generality because smooth solutions with mass below the equicontinuity threshold are automatically global:

\begin{lemma}\label{L:horizon}
Let $q:[0,T) \to H^\infty_+(\R)$ be a solution to \eqref{CM-DNLS}.  If $\{q(t): t\in [0,T)\}$ is $L^2$-equicontinuous, then $q(t)$ extends $H^\infty_+(\R)$-continuously to $[0,T]$. 
\end{lemma}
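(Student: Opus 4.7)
The plan is to derive a uniform a priori bound on $\|q(t)\|_{H^n_+}$ for every integer $n\geq 1$ on $[0,T)$, and then invoke the local $H^n_+$ well-posedness of Theorem~\ref{T:GL} to continuously extend $q$ to $[0,T]$. Indeed, once the bound $\sup_{t\in[0,T)}\|q(t)\|_{H^n_+}<\infty$ is known, the local theory provides a lifespan $\tau_n^*>0$ uniform over the orbit; choosing $t_*\in(T-\tau_n^*,T)$, the local solution launched from $q(t_*)$ exists on $[t_*,t_*+\tau_n^*]\supset[t_*,T]$ and, by $C_tH^n_+$ uniqueness, coincides with $q$ on $[t_*,T)$. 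This yields the $H^n_+$-continuous extension to $[0,T]$ for each $n$, hence the desired $H^\infty_+$ extension.

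The heart of the argument is a uniform $H^1_+$ bound. A direct computation from \eqref{Lax pair} and \eqref{E:Hamil} gives the identity $\|\mc L_q q\|_{L^2}^2 = 2H_\pm(q)$, and both $M(q(t))$ and $H_\pm(q(t))$ are conserved along the smooth flow. Expanding and using the self-adjointness of $\mc L_{q(t)}$ together with Cauchy--Schwarz,
\begin{align*}
\|(\mc L_{q(t)}+\kappa)q(t)\|_{L^2}^2 &= 2H_\pm(q^0) + 2\kappa\langle q(t),\mc L_{q(t)}q(t)\rangle + \kappa^2 M(q^0) \\
&\leq 2H_\pm(q^0) + 2\kappa\sqrt{2M(q^0)\,H_\pm(q^0)} + \kappa^2 M(q^0).
\end{align*}
By hypothesis and mass conservation, the orbit $Q:=\{q(t):t\in[0,T)\}$ is bounded and equicontinuous in $L^2_+$, so Proposition~\ref{P:Lax} furnishes $\kappa_0>0$ for which the norm equivalence \eqref{equiv norms} at $s=1$ holds uniformly on $Q$ at $\kappa=\kappa_0$. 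Setting $\kappa=\kappa_0$ in the above display then yields $\sup_{t\in[0,T)}\|q(t)\|_{H^1_+}<\infty$.

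Upgrading to uniform $H^n_+$ bounds for $n\geq 2$ proceeds via persistence of regularity: a careful energy estimate at the $H^n$ level, with the derivative nonlinearity handled through Hardy--Sobolev product estimates in the spirit of \eqref{277}, gives a differential inequality of the form $\tfrac{d}{dt}\|q(t)\|_{H^n_+}^2 \leq C_n(\|q(t)\|_{H^1_+})\|q(t)\|_{H^n_+}^2$, whence Gronwall's inequality together with the $H^1$ bound of the previous step closes the estimate on $[0,T)$. Alternatively, one may deploy higher conservation laws in the Calogero--Moser hierarchy, with coercivity secured via Proposition~\ref{P:Lax} rather than via a small-mass assumption as in \cite{GerardLenzmann}.

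The main obstacle is the $H^1_+$ bound in the focusing case: here $H_+$ is not a priori coercive on $H^1_+$ (the nonlinear cross term in $|q'-iqC_+(|q|^2)|^2$ can in principle dominate), so the conservation of $H_+$ alone does not deliver an $H^1$ bound. The role of the $L^2$-equicontinuity hypothesis is precisely to unlock this coercivity via the norm equivalence \eqref{equiv norms} of Proposition~\ref{P:Lax}, thereby converting the conserved quantity $\|\mc L_{q(t)}q(t)\|_{L^2}$ into control of $\|q(t)\|_{H^1_+}$ with constants independent of $t\in[0,T)$.
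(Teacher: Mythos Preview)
Your proof is correct, and the overall architecture (obtain a uniform $H^1_+$ bound from equicontinuity and conservation of the Hamiltonian, then bootstrap to higher regularity and invoke the local theory) matches the paper's. The mechanism for the $H^1_+$ bound, however, is genuinely different.

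The paper works directly with the expansion of $H_\pm(q)$: it controls the discrepancy $|H_\pm(q)-\tfrac12\|q'\|_{L^2}^2|$ by $\eta\|q'\|_{L^2}^2+\eta^{-1}\|q\|_{L^6}^6$, and then uses a Bernstein/Gagliardo--Nirenberg splitting together with $L^2$-equicontinuity to absorb the $L^6$ term. Your argument is more operator-theoretic: you recognize the identity $\|\mc L_q q\|_{L^2}^2=2H_\pm(q)$, bound $\|(\mc L_{q(t)}+\kappa)q(t)\|_{L^2}^2$ purely in terms of the conserved quantities $M$ and $H_\pm$, and then invoke the $s=1$ norm equivalence \eqref{equiv norms} from Proposition~\ref{P:Lax} to convert this into an $H^1_+$ bound. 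Equicontinuity enters only through the choice of $\kappa_0(Q)$ in Proposition~\ref{P:Lax}. This is cleaner and makes more systematic use of the machinery already assembled in Section~\ref{S:2}; the paper's argument is more self-contained and avoids appealing to the full strength of \eqref{equiv norms}. For the higher norms and the extension step, both the paper and your proposal defer to the persistence-of-regularity arguments of \cite{GerardLenzmann}.
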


\begin{proof}
We will use conservation of the Hamiltonian to show that $q(t)$ is $H^1$-bounded on $[0,T)$.  This guarantees $H^1$-continuous extension by the results of \cite{GerardLenzmann} described in Theorem~\ref{T:GL}.  It also provides the base step of their inductive argument yielding control over all higher Sobolev norms.

Given a frequency cutoff parameter $N$, we may combine the Bernstein and Gagliardo--Nirenberg inequalities to see that
\begin{align*}
\| q\|_{L^6}^6 \lesssim N^2 \| q_{\leq N} \|_{L^2}^6 + \| q_{>N}\|_{L^2}^4 \| q_{>N}' \|_{L^2}^2
	\lesssim N^2 M(q)^3 + \| q_{>N}\|_{L^2}^4 \| q' \|_{L^2}^2
\end{align*}
uniformly for $0\leq t <T$.   As $q(t)$ is assumed to be $L^2$-equicontinuous, given $\eta>0$, we may choose $N(\eta)$ so that
\begin{align}\label{h1}
   \| q(t) \|_{L^6}^6 \lesssim \eta^2 \| q'(t)  \|_{L^2}^2 +  N(\eta)^2 M(q) \quad\text{uniformly for $t\in[0,T)$.}
\end{align}
The role of this estimate will be to control the discrepancy between the full Hamiltonians \eqref{E:Hamil} and their quadratic part: $H_\pm^{[2]}:=\int \frac12 |q'|^2\, dx$.

Using $L^3$-boundedness of $C_+$, we find that for any $0<\eta<1$,
\begin{align*}
\Bigl| H_\pm(q(t)) - H_\pm^{[2]}(q(t))\Bigr| \lesssim \eta \|q'(t)\|_{L^2}^2 + \eta^{-1}  \| q(t)\|_{L^6}^6 .
\end{align*}
Combining this with \eqref{h1} and choosing $\eta$ sufficiently small, we deduce that $q(t)$ is bounded in $H^1$ from the conservation of the Hamiltonian.
\end{proof}

Schatten-class properties of the resolvent of the Lax operator $\mc L_q$ will play a key role in proving equicontinuity of orbits in the next section.  To demonstrate these properties, we will employ the following information about the free resolvent:

\begin{lemma}\label{L:HS est}
For $q\in L^2_+(\R)$ and $\kappa >0$,
\begin{align}
\norm{ C_+ \ol q R_0(\kappa) }_{\hs}^2 = \norm{ R_0(\kappa)q }_{\hs}^2
	&= \frac{1}{2\pi} \int_0^\infty \frac{|\wh{q}(\xi)|^2}{\xi+\kappa}\,d\xi, \label{hs}\\
\norm{ C_+\ol{q}R_0(\kappa)q }_{\hs}&\lesssim \|q\|_{L^2}^2. \label{hs improved}
\end{align}
\end{lemma}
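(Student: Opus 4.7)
The identity \eqref{hs} reduces to a Fourier-side kernel computation for $R_0(\kappa)q$, once one observes that its adjoint on $L^2_+(\R)$ is precisely $C_+\ol{q}R_0(\kappa)$. The estimate \eqref{hs improved} is more delicate: neither $C_+\ol{q}\sqrt{R_0(\kappa)}$ nor $\sqrt{R_0(\kappa)}q$ is Hilbert--Schmidt on $L^2_+(\R)$---a direct Fourier computation shows that the HS norm of either factor carries a logarithmic divergence in $\kappa$---so one cannot simply factor and apply $\|AB\|_{\hs}\leq \|A\|_{\op}\|B\|_{\hs}$. The two copies of $q$ must be treated together. The heart of the argument is a weighted Cauchy--Schwarz that places the entire resolvent weight $(\xi+\kappa)^{-2}$ on a single copy of $\wh{q}$, leaving the second copy to be absorbed by a plain $\|q\|_{L^2}^2$.

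\textbf{Proof of \eqref{hs}.} For $f,g\in L^2_+(\R)$, the chain
\[
\langle R_0(\kappa)(qf),g\rangle = \langle qf,R_0(\kappa)g\rangle = \langle f,\ol{q}R_0(\kappa)g\rangle = \langle f,C_+[\ol{q}R_0(\kappa)g]\rangle,
\]
(using self-adjointness of $R_0(\kappa)$ and $f\in L^2_+$) identifies $(R_0(\kappa)q)^* = C_+\ol{q}R_0(\kappa)$, so the two operators have equal HS norms and it suffices to compute one. Since $\wh{q}$ and $\wh{f}$ are supported on $[0,\infty)$,
\[
\widehat{R_0(\kappa)(qf)}(\xi) = \tfrac{1}{\sqrt{2\pi}(\xi+\kappa)}\int_0^\xi \wh{q}(\xi-\eta)\wh{f}(\eta)\,d\eta \qquad (\xi\geq 0),
\]
exhibiting $R_0(\kappa)q$ on the Fourier side as an integral operator with kernel $\frac{\wh{q}(\xi-\eta)}{\sqrt{2\pi}(\xi+\kappa)}\cdot 1_{\{0\leq\eta\leq\xi\}}$. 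Squaring, substituting $\zeta=\xi-\eta$, applying Fubini, and evaluating $\int_\zeta^\infty (\xi+\kappa)^{-2}\,d\xi = (\zeta+\kappa)^{-1}$ yields \eqref{hs}.

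\textbf{Proof of \eqref{hs improved}.} Unfolding both convolutions against $\wh{f}$ identifies $C_+\ol{q}R_0(\kappa)q$ as a (self-adjoint) operator on $L^2_+(\R)$ with Fourier-side kernel
\[
K(\mu,\eta) = \tfrac{1}{2\pi}\int_{\max(\mu,\eta)}^\infty \tfrac{\overline{\wh{q}(\xi-\mu)}\,\wh{q}(\xi-\eta)}{\xi+\kappa}\,d\xi, \qquad \mu,\eta\geq 0.
\]
By the symmetry $K(\mu,\eta)=\overline{K(\eta,\mu)}$, it suffices to estimate the contribution from $0\leq\mu\leq\eta$. On this region I apply Cauchy--Schwarz in $\xi$ by pairing $\wh{q}(\xi-\mu)/(\xi+\kappa)$ against $\wh{q}(\xi-\eta)$:
\[
|K(\mu,\eta)|^2 \leq \tfrac{1}{(2\pi)^2}\left(\int_\eta^\infty \tfrac{|\wh{q}(\xi-\mu)|^2}{(\xi+\kappa)^2}\,d\xi\right)\left(\int_\eta^\infty |\wh{q}(\xi-\eta)|^2\,d\xi\right) \leq \tfrac{\|q\|_{L^2}^2}{(2\pi)^2}\int_\eta^\infty \tfrac{|\wh{q}(\xi-\mu)|^2}{(\xi+\kappa)^2}\,d\xi.
\]
I then integrate successively in $\mu\in[0,\eta]$ (substituting $\zeta=\xi-\mu$), in $\eta\in[0,\xi]$, and in $\xi$, invoking Fubini at each stage. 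The inner double integration in $(\mu,\eta)$ collapses to $\int_0^\xi \zeta\,|\wh{q}(\zeta)|^2\,d\zeta$, and a final application of $\int_\zeta^\infty (\xi+\kappa)^{-2}\,d\xi=(\zeta+\kappa)^{-1}$ leaves
\[
\int_0^\infty\!\!\int_0^\eta |K(\mu,\eta)|^2\,d\mu\,d\eta \leq \tfrac{\|q\|_{L^2}^2}{(2\pi)^2}\int_0^\infty \tfrac{\zeta\,|\wh{q}(\zeta)|^2}{\zeta+\kappa}\,d\zeta \leq \tfrac{\|q\|_{L^2}^4}{(2\pi)^2},
\]
where the crucial gain $\zeta/(\zeta+\kappa)\leq 1$ is what yields a $\kappa$-uniform bound. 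Doubling by the $\mu\leftrightarrow\eta$ symmetry proves \eqref{hs improved}.
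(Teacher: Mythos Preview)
Your proof is correct. For \eqref{hs}, your kernel computation is essentially the paper's argument, just phrased via the integral kernel rather than the trace formula $\tr\{R_0(\kappa)qC_+\ol{q}R_0(\kappa)\}$.

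For \eqref{hs improved}, your route is genuinely different from the paper's and in fact more elementary. The paper does not write down the explicit kernel $K(\mu,\eta)$; instead it performs a dyadic decomposition
\[
C_+\ol{q}R_0(\kappa)q = \sum_{N_1,N_2}\ \sum_{M\geq \frac{1}{10}N_1\vee N_2} C_+\smash[b]{\ol q}_{N_1}R_0(\kappa)P_M q_{N_2},
\]
bounds each piece in $\hs$ by passing through the $\I_4$ class via $\|fg(-i\partial)\|_{\I_4}\lesssim\|f\|_{L^4}\|g\|_{L^4}$ (Simon's Trace Ideals, Theorem~4.1), uses Bernstein to convert $L^4$ to $L^2$, and closes with a Schur test over the dyadic indices. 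Your weighted Cauchy--Schwarz, placing the full resolvent weight $(\xi+\kappa)^{-2}$ on a single copy of $\wh q$, accomplishes the same cancellation in one stroke and even yields the explicit constant $\|C_+\ol q R_0(\kappa)q\|_{\hs}\leq \tfrac{1}{\pi\sqrt{2}}\|q\|_{L^2}^2$. The paper's approach has the advantage of being a template that generalizes to operators whose kernels are less explicit, but for the statement at hand your argument is cleaner.
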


\begin{proof}
The two operators in \eqref{hs} are adjoints of one another.  Their Hilbert--Schmidt norm follows from an explicit computation in Fourier variables:
\begin{align*}
\norm{ R_0(\kappa)q }_{\hs}^2 = \tr\big\{ R_0(\kappa)qC_+\ol{q} R_0(\kappa) \big\}
&= \frac{1}{2\pi}  \int_0^\infty \!\! \int_0^\infty \!\!\frac{1}{(\xi+\kappa)^2}\widehat{q}(\xi-\eta)\widehat{\ol{q}}(\eta-\xi)\, d\eta\,d\xi\\
&= \frac{1}{2\pi} \!\!\int_0^\infty  \!\!\int_0^\infty \frac{|\wh{q}(\xi)|^2}{(\xi+\eta+\kappa)^2}\, d\eta\,d\xi\\
&= \frac{1}{2\pi} \!\!\int_0^\infty \frac{|\wh{q}(\xi)|^2}{\xi+\kappa}\,d\xi.
\end{align*}

We turn now to \eqref{hs improved}.  A simple frequency analysis shows that on $L^2_+$ we have the operator identity
$$
C_+\ol{q}R_0(\kappa)q = \sum_{N_1,N_2} \ \sum_{M\geq \frac1{10} N_1\vee N_2}C_+\smash[b]{\ol q}_{N_1}R_0(\kappa)P_M q_{N_2},
$$
where we employ the notation $ N_1\vee N_2 = \max\{ N_1, N_2\}$. Thus, using the H\"older inequality for Schatten classes together with \cite[Theorem~4.1]{Trace Ideals} and the Bernstein inequality, we may bound
\begin{align*}
\norm{ C_+\ol{q}R_0(\kappa)q }_{\hs}
&\lesssim \sum_{N_1,N_2}\sum_{M\geq \frac1{10} N_1\vee N_2} \norm{\ol{q}_{N_1}\sqrt{R_0(\kappa)}P_M }_{\I_4}\norm{P_M \sqrt{R_0(\kappa)} q_{N_2}}_{\I_4}\\
&\lesssim \sum_{N_1,N_2}\sum_{M\geq \frac1{10} N_1\vee N_2} \norm{q_{N_1}}_{L^4}\norm{q_{N_2}}_{L^4} \bigl\| \sqrt{\tfrac1{\xi+\kappa}}\bigr\|_{L^4(|\xi|\sim M)}^2\\
&\lesssim \sum_{N_1,N_2}\sum_{M\geq \frac1{10} N_1\vee N_2} (N_1 N_2)^{\frac14} \frac{M^{\frac12}}{M+\kappa} \norm{q_{N_1}}_{L^2}\norm{q_{N_2}}_{L^2}\\
&\lesssim \sum_{N_1,N_2}\frac{(N_1 N_2)^{\frac14}}{(N_1\vee N_2)^\frac12}  \norm{q_{N_1}}_{L^2}\norm{q_{N_2}}_{L^2}\\
&\lesssim \|q\|_{L^2}^2,
\end{align*}
where the last line follows from applying the Schur test to sum in $N_1$ and $N_2$.
\end{proof}

We adopt the following notation for the resolvent of $\mc L_q$:
\begin{equation}\label{R series}
R(\kappa,q) = (\mc L_q +\kappa)^{-1} .
\end{equation}
Although $R(\kappa)$ does not belong to trace class, the difference $R(\kappa)-R_0(\kappa)$ does, at least for $\kappa$ sufficiently large.  This is the topic of the next lemma.  More striking is the fact that this trace is conserved under the \eqref{CM-DNLS} flow; this is the subject of the subsequent Proposition~\ref{L:conserv law}.

\begin{lemma}\label{t:beta}
Given $q\in L^2_+(\R)$ and $\kappa>0$ such that $\mc L_q+\kappa$ is positive definite, 
\begin{equation}\label{loop}
R(\kappa)-R_0(\kappa), \quad R_0(\kappa)qC_+\ol q R_0(\kappa),  \qtq{and}  R_0(\kappa)qC_+\ol{q} R(\kappa) qC_+\ol{q} R_0(\kappa)   
\end{equation}
all belong to the trace class $\I_1.$ Moreover, for any bounded and equicontinuous $Q$,
\begin{gather}\label{beta est}
\lim_{\kappa\to\infty} \sup_{q\in Q} \kappa \big\| R_0(\kappa)qC_+\ol{q} R(\kappa) qC_+\ol{q} R_0(\kappa) \big\|_{\I_1} =0.
\end{gather}
\end{lemma}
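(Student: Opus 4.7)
Let $V := qC_+\ol q$ and write $V = W^*W$ with $W := C_+\ol q$ viewed on $L^2_+(\R)$; this positivity structure is what will allow each trace-norm estimate to be split into Hilbert--Schmidt pieces. I will dispatch the three operators in \eqref{loop} in turn.

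For the middle operator, I would just observe
\[
R_0(\kappa) V R_0(\kappa) = \bigl(R_0(\kappa) q C_+\bigr)\bigl(C_+\ol q R_0(\kappa)\bigr),
\]
which exhibits it as a product of two Hilbert--Schmidt operators that are adjoints of one another on $L^2_+$, with HS norm furnished directly by Lemma~\ref{L:HS est}. For the third operator, the key move is the $TT^*$ factorisation with $T := R_0 V R(\kappa)^{1/2}$, which gives
\[
\bigl\|R_0 V R V R_0\bigr\|_{\I_1} = \|T\|_{\I_2}^2 \leq \bigl\|R_0 qC_+\bigr\|_{\I_2}^2 \, \bigl\|C_+\ol q R^{1/2}\bigr\|_\op^2.
\]
The first factor is HS by Lemma~\ref{L:HS est}; the second is bounded because $\|C_+\ol q R^{1/2}\|_\op^2 = \|R^{1/2}VR^{1/2}\|_\op$, which one controls via the sharp form bound $V \leq (2\pi)^{-1}\|q\|_{L^2}^2\,\mc L_0$ of \eqref{op est 4} together with the positive-definiteness of $\mc L_q+\kappa$. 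Once these two operators are trace class, membership of $R(\kappa)-R_0(\kappa)$ in $\I_1$ drops out of
\[
R - R_0 = \pm R_0 V R_0 + R_0 V R V R_0,
\]
obtained by substituting $R = R_0 \pm RVR_0$ (the resolvent identity) into $R-R_0 = \pm R_0 V R$.

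To prove the uniform limit \eqref{beta est}, I would retain the $TT^*$ decomposition and make the factor of $\kappa$ be absorbed by the HS piece: Lemma~\ref{L:HS est} supplies
\[
\bigl\|R_0 qC_+\bigr\|_{\I_2}^2 = \tfrac{1}{2\pi}\int_0^\infty \tfrac{|\wh q(\xi)|^2}{\xi+\kappa}\,d\xi \leq \tfrac{M(q)}{2\pi\kappa}.
\]
For the operator-norm factor, I would invoke the equivalence \eqref{equiv norms} to replace $R^{1/2}$ by $R_0^{1/2}$ up to a constant uniform in $q\in Q$ and $\kappa\geq \kappa_0(Q)$, and then apply \eqref{rc equi} of Lemma~\ref{L:rc}, which delivers $\sup_{q\in Q}\|C_+\ol q R_0^{1/2}\|_\op^2 \to 0$ as $\kappa\to\infty$. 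Putting these together,
\[
\kappa\,\bigl\|R_0 V R V R_0\bigr\|_{\I_1} \,\lesssim\, M(q)\,\bigl\|C_+\ol q R_0^{1/2}\bigr\|_\op^2 \longrightarrow 0
\]
uniformly for $q\in Q$.

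The principal obstacle I anticipate is the control of $\|C_+\ol q R^{1/2}\|_\op$: because $R^{1/2}$ is the perturbed object, there is no direct Hilbert--Schmidt identity for it as there is for $R_0^{1/2}$, so the estimate must be routed through \eqref{equiv norms}. This is the step where the equicontinuity of $Q$ genuinely enters the argument via Lemma~\ref{L:rc}, providing the uniformly small coupling between the perturbation $V$ and high spectral scales of $\mc L_0$ that ultimately drives \eqref{beta est}.
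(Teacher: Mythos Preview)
Your argument is correct and takes essentially the same approach as the paper. The only variation is cosmetic: where you factor the third operator as $TT^*$ with $T=R_0VR^{1/2}$ and invoke \eqref{equiv norms} to pass from $R^{1/2}$ to $R_0^{1/2}$, the paper instead uses the form bound \eqref{form comp} (i.e., $R\leq 2R_0$ for $\kappa\geq\kappa_0$) to obtain $R_0VRVR_0 \leq 2\,R_0VR_0VR_0$ and then bounds this by $\|R_0 q\|_{\I_2}^2\,\|C_+\ol q\,R_0\, q\|_\op$, with both routes ultimately resting on \eqref{hs} and \eqref{rc equi}.
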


\begin{proof}
The resolvent identity 
\begin{equation}\label{beta12}
R(\kappa)-R_0(\kappa) = \pm R_0(\kappa)qC_+\ol q R_0(\kappa) + R_0(\kappa)qC_+\ol{q} R(\kappa) qC_+\ol{q} R_0(\kappa)  
\end{equation}
reduces consideration of the first operator in \eqref{loop} to treatment of the other two. 

The second operator in \eqref{loop} has the form $TT^*$ with $T=R_0(\kappa)q$ and so is positive definite.  Together with \eqref{hs}, this yields
\begin{equation}
\big\| R_0(\kappa)qC_+\ol q R_0(\kappa) \big\|_{\I_1} = \tr\bigl\{ R_0(\kappa)qC_+\ol q R_0(\kappa) \bigr\}
	= \frac1{2\pi}\int_0^\infty \frac{|\wh{q}(\xi)|^2}{\xi+\kappa}\,d\xi.
\end{equation}
Thus our claims about this operator hold for all $\kappa>0$.

To treat the third operator in \eqref{loop}, we need some information about $R(\kappa)$.  If we choose $\kappa_0=\kappa_0(\{q\})$ according to Proposition~\ref{P:Lax}, then \eqref{form comp} guarantees that $0 \leq R(\kappa) \leq 2 R_0(\kappa)$ as quadratic forms whenever $\kappa\geq \kappa_0$.  In this way, we see that
\begin{align*}
0 \leq R_0(\kappa)qC_+\ol{q} R(\kappa) qC_+\ol{q} R_0(\kappa) \leq 2 \, R_0(\kappa)qC_+\ol{q} R_0(\kappa) qC_+\ol{q} R_0(\kappa),
\end{align*}
still in the sense of quadratic forms. From \eqref{hs} and \eqref{hs improved}, it follows that
\begin{equation}\label{2.23}\begin{aligned}
\big\| R_0(\kappa)qC_+\ol{q} R(\kappa) qC_+\ol{q} R_0(\kappa) \big\|_{\I_1} 
	&\leq 2\, \big\| R_0(\kappa)q \bigr\|_{\I_2}^2 \big\| C_+ \ol{q} R_0(\kappa)\,q \bigr\|_\op \\
	&\lesssim \kappa^{-1}\|q\|_{L^2}^4.
\end{aligned}\end{equation}

Claim \eqref{beta est} follows from \eqref{2.23}, \eqref{hs}, and \eqref{rc equi} after choosing $\kappa_0=\kappa_0(Q)$.
\end{proof}

\begin{lemma}[A conservation law]\label{L:conserv law}
Let $q(t)$ denote an $H^\infty_+(\R)$ solution to~\eqref{CM-DNLS} and suppose $\kappa>0$ is such that $\mc L_{q(0)}+\kappa$ is positive definite.  Then $\mc L_{q(t)}+\kappa$ is positive definite for all times $t$ and
\begin{equation}\label{beta consv}
\tfrac{d}{dt}\tr\{R(\kappa, q(t))-R_0(\kappa)\} = 0 .
\end{equation}
\end{lemma}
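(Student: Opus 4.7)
The plan is to reduce positive definiteness to unitary equivalence, and then to differentiate $\tr\{R(\kappa,q(t)) - R_0\}$ in $t$ and show the derivative vanishes using the Lax pair structure.  First, Proposition~\ref{P:Pflow} provides a unitary propagator $U(t;0)$ satisfying $\mc L_{q(t)} = U(t;0)\mc L_{q(0)}U(t;0)^*$, so $\mc L_{q(t)}$ and $\mc L_{q(0)}$ share spectra.  In particular, $\mc L_{q(t)} + \kappa$ is positive definite for all $t$, and the resolvents obey $R(\kappa,q(t)) = U(t;0)R(\kappa,q(0))U(t;0)^*$.

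Differentiating this identity in $t$ using $\dot U = \mc P U$ produces $\dot R(\kappa,q(t)) = [\mc P(q(t)),\,R(\kappa,q(t))]$.  The decomposition $\mc P = -i\mc L_q^2 + \widetilde{\mc P}$ from \eqref{bad P} is useful here:  since $\mc L_q^2$ commutes with the resolvent, this simplifies to
\[
\dot R(\kappa,q(t)) = [\widetilde{\mc P}(q(t)),\,R(\kappa,q(t))] .
\]
Crucially, $\widetilde{\mc P}$ is bounded on $L^2_+(\R)$ for $q\in H^\infty_+(\R)$, unlike $\mc P$.

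To show $\tr\{\dot R\} = 0$, split $R = R_0 + (R-R_0)$.  The piece $[\widetilde{\mc P},\,R-R_0]$ is a commutator of a bounded operator with a trace-class one (Lemma~\ref{t:beta}) and has zero trace by cyclicity.  For the remaining piece, use the identity $[\widetilde{\mc P},\,R_0] = R_0[\mc L_0,\widetilde{\mc P}]R_0$ and compute $[\mc L_0,\widetilde{\mc P}]$ from \eqref{bad P}.  The potentially problematic $q'C_+\ol q'$ contributions cancel inside the bracket, leaving $\pm i(q''C_+\ol q - qC_+\ol q'')$ from the quadratic part of $\widetilde{\mc P}$ together with three summands from the cubic piece $iqC_+|q|^2C_+\ol q$.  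A direct Fourier-kernel computation shows that $\tr\{R_0 q''C_+\ol q R_0\} = \tr\{R_0 qC_+\ol q'' R_0\}$, so the quadratic contribution vanishes.  For the cubic part, the three traces all share the integrand $\wh q(w_3)\wh{|q|^2}(w_2-w_3)\overline{\wh q(w_2)}/(w_2\vee w_3 + \kappa)$ but with coefficients $w_3$, $w_2-w_3$, and $-w_2$ (respectively), which sum to zero.

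Finally, one must justify the interchange of $\tfrac{d}{dt}$ and $\tr$.  This follows from showing that $\dot R(\kappa,q(t))$ is continuous in $t$ in the $\I_1$ topology, established by differentiating the representation \eqref{beta12} term-by-term using $\dot q = \mc P q$ and the $H^\infty_+$ regularity of $q(t)$.  The main obstacle is the explicit Fourier computation verifying the cancellations in the third paragraph; while elementary, it requires careful bookkeeping of the Hankel-type structure of $C_+\ol q$ and the various $\zeta$-integrations arising from the resolvents.
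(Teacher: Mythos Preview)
Your argument is correct and takes a genuinely different route from the paper's.  The paper never computes $\tr\{[\widetilde{\mc P},R_0]\}$ explicitly.  Instead, it works with the \emph{square} of the resolvent: it shows that both $\widetilde{\mc P}(q(t))\,R(\kappa,q(t))^2$ and $R(\kappa,q(t))^2\,\widetilde{\mc P}(q(t))$ are individually trace class (via the factorization $C_+fC_+gR_0^2 = C_+fR_0\bigl[C_+gR_0 - iC_+g'R_0^2\bigr]$), so that $\tr\{[\widetilde{\mc P},R^2]\}=0$ follows from cyclicity without any Fourier calculation.  This yields $\tr\{R(\kappa,q(t))^2 - R(\kappa,q(0))^2\}=0$, which the paper then integrates in the spectral parameter over $[\kappa,\varkappa]$, rewrites using the telescoping $[R(\kappa)-R_0(\kappa)]-[R(\varkappa)-R_0(\varkappa)]$, and finally sends $\varkappa\to\infty$ using the $O(\varkappa^{-1})$ bound from Lemma~\ref{t:beta}.

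Your approach is more direct---no passage to $R^2$, no $\kappa$-integration, no limit in $\varkappa$---at the price of the explicit Fourier-kernel computation for $\tr\{R_0[\mc L_0,\widetilde{\mc P}]R_0\}$.  That computation is necessary in your scheme because $R_0$ is not trace class, so cyclicity cannot be invoked for $[\widetilde{\mc P},R_0]$ abstractly.  The paper's approach is cleaner in that respect (no bookkeeping of Hankel-type kernels), but it trades one technicality for another: the $R^2$ detour followed by spectral integration and a limiting argument.  Both arguments ultimately rest on the same structural input, namely the boundedness of $\widetilde{\mc P}$ and the trace-class nature of $R-R_0$.
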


\begin{proof}
From \eqref{L conj} we see that $\mc L_{q(t)}+\vk$ is positive definite for all $t$ and all $\vk\geq\kappa$.  Together with Lemma~\ref{t:beta}, this ensures that the trace \eqref{beta consv} is well defined.

We claim that both  $\widetilde{\mc P}(q(t)) R(\kappa, q(t))^2$ and $R(\kappa, q(t))^2\widetilde{\mc P}(q(t))$ are trace class operators, where $\widetilde{\mc P}(q(t))$ is the alternative form of the Lax operator presented in \eqref{bad P}.  As a first step to verifying this, we observe that for any $f,g \in H^\infty(\R)$,
\begin{align*}
C_+ f C_+ g R_0(\kappa)^2 &= C_+f R_0(\kappa) \cdot (\mc L_0 + \kappa) C_+ g R_0(\kappa)^2 \\
&=C_+f R_0(\kappa) \Bigl[ C_+ g R_0(\kappa) - i  C_+ g' R_0(\kappa)^2 \Bigr]
\end{align*}
and so by \eqref{hs}, this operator is trace class.  Combining this with Lemma~\ref{t:beta}, we deduce that $C_+ f C_+ g R(\kappa)^2$ is also trace class.  This in turn proves that the operator $\widetilde{\mc P}(q(t)) R(\kappa, q(t))^2$ is trace class.  The fact that the adjoint of this operator is also trace class settles the analogous question for $R(\kappa, q(t))^2\widetilde{\mc P}(q(t))$.

As both products are trace class, it follows that
$$
\tr \bigl\{[\widetilde{\mc P}(q(t)), R(\kappa, q(t))^2]\bigr\}=0.
$$
By the Fundamental Theorem of Calculus, this yields 
$$
\tr \bigl\{R(\kappa, q(t))^2 - R(\kappa, q(0))^2\bigr\}=0.
$$
Integrating the spectral parameter over the interval $[\kappa, \varkappa]$ and rearranging, we get
\begin{align*}
\tr \bigl\{[R(\kappa, q(t)) - R_0(\kappa)] &-[R(\varkappa, q(t)) - R_0(\varkappa)] \bigr\}\\
&= \tr \bigl\{[R(\kappa, q(0)) - R_0(\kappa)] -[R(\varkappa, q(0)) - R_0(\varkappa)] \bigr\}.
\end{align*}
Using \eqref{beta12}, \eqref{hs}, and \eqref{beta est}, we see that uniformly in $t$,
\begin{align*}
 \bigl\| R(\varkappa, q(t)) - R_0(\varkappa) \bigr\|_{\I_1} = O(\vk^{-1}).
\end{align*}
Thus sending $\varkappa\to \infty$ we obtain the desired conservation law.
\end{proof}

\section{Equicontinuity in $L^2$}\label{S:3}

In this section we prove $L^2$-equicontinuity of orbits of $H^\infty_+$ solutions to \eqref{CM-DNLS}, both in the focusing and defocusing settings.  In the focusing case, we will require that the initial data satisfies \eqref{M<M*}.

Our key quantity for detecting equicontinuity in $L^2(\R)$ is
\begin{equation}\label{beta defn'}
\beta(\kappa,q) := M(q) \mp 2\pi\kappa \tr\{ R(\kappa)-R_0(\kappa) \}  .
\end{equation}
In view of Lemma~\ref{L:conserv law}, this quantity is conserved by $H^\infty_+$ solutions to \eqref{CM-DNLS}.  Moreover, using \eqref{hs} we find that the quadratic term is given by
\begin{equation}\label{beta0 quadratic 2}
\beta\sbrack{2}(\kappa,q) = M(q) -2\pi\kappa \tr\{R_0(\kappa)qC_+\ol{q}R_0(\kappa)\}= \int_0^\infty \frac{\xi}{\xi+\kappa} |\wh{q}(\xi)|^2\,d\xi .
\end{equation}
By the resolvent identity \eqref{beta12}, the remainder can be written
\begin{equation}\label{beta0 higher}
\beta(\kappa,q) - \beta\sbrack{2}(\kappa,q) = \mp 2\pi\kappa \tr\{R_0(\kappa)qC_+\ol qR(\kappa) qC_+\ol qR_0(\kappa)\}. 
\end{equation}

In the defocusing case, it will suffice to exploit the direct connection between $\beta\sbrack{2}$ and $\wh q$ visible in \eqref{beta0 quadratic 2}.  In the focusing case, the remainder \eqref{beta0 higher} is more troublesome and for this purpose we introduce $\mc K$ norms inspired by \cite{MR4628747}:  Given an infinite subset $\mathcal K\subseteq 2^{\mathbb N}$, we define 
\begin{align}\label{K defn}
    \|q\|_{\mathcal K}^2&:=\|q\|_{L^2}^2 + \sum_{\kappa\in \mathcal K} \beta^{[2]}(\kappa,q).
\end{align}

The connections between $\beta\sbrack{2}(\kappa,q)$, $\mc K$ norms, equicontinuity, and $\beta(\kappa,q)$ can be summarized thus:

\begin{lemma}\label{L:equi via K}
Let $Q$ be a bounded subset of $L^2_+(\R)$.  The following are equivalent:
\begin{enumerate}
\item $Q$ is equicontinuous in $L^2(\R)$.
\item $\sup_{q\in Q}\beta\sbrack{2}(\kappa,q)  \to 0$ as $\kappa\to\infty$.
\item There exists an infinite set $\mathcal K\subseteq 2^{\mathbb N}$ so that $\sup_{q\in Q}\|q\|_{\mathcal K}<\infty$.
\end{enumerate}
Moreover, if $Q$ is $L^2$-equicontinuous, then
\begin{align}\label{equi to beta}
     \sup_{q\in Q} \; \beta(\kappa,q)  \to 0 \qtq{as} \kappa\to\infty.
\end{align}
\end{lemma}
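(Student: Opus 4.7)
My plan is to establish the three equivalences in succession and then deduce \eqref{equi to beta} from (2) combined with Lemma~\ref{t:beta}.

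For (1) $\Leftrightarrow$ (2), I would argue directly from the formula \eqref{beta0 quadratic 2} and the Fourier characterization \eqref{equicty 1} of equicontinuity. Splitting the integral defining $\beta\sbrack{2}(\kappa,q)$ at $\xi=\sqrt{\kappa}$, the factor $\tfrac{\xi}{\xi+\kappa}$ is bounded by $\kappa^{-1/2}$ on $[0,\sqrt{\kappa}]$, contributing at most $\kappa^{-1/2}\|q\|_{L^2}^2$, while on $[\sqrt{\kappa},\infty)$ the same factor is bounded by $1$, contributing at most $\int_{\sqrt{\kappa}}^\infty |\wh{q}(\xi)|^2\,d\xi$. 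Under (1) both pieces vanish uniformly for $q\in Q$ as $\kappa\to\infty$, yielding (2). Conversely, since $\tfrac{\xi}{\xi+\kappa}\geq \tfrac12$ on $[\kappa,\infty)$, we have $\int_\kappa^\infty|\wh{q}(\xi)|^2\,d\xi \leq 2\beta\sbrack{2}(\kappa,q)$, so (2) implies (1).

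For (2) $\Leftrightarrow$ (3), the direction (2) $\Rightarrow$ (3) is routine: one selects $\kappa_n\in 2^{\N}$ large enough that $\sup_{q\in Q}\beta\sbrack{2}(\kappa_n,q)\leq 2^{-n}$ and takes $\mc{K}=\{\kappa_n\}$. The reverse direction rests on the monotonicity of $\kappa\mapsto \beta\sbrack{2}(\kappa,q)$, which is immediate from \eqref{beta0 quadratic 2}. For any $\kappa^*\in\mc{K}$, letting $n(\kappa^*)$ denote the number of elements of $\mc{K}$ that are $\leq \kappa^*$, monotonicity yields
\begin{equation*}
n(\kappa^*)\,\beta\sbrack{2}(\kappa^*,q) \leq \sum_{\kappa\in\mc{K},\,\kappa\leq \kappa^*}\beta\sbrack{2}(\kappa,q)\leq \|q\|_{\mc{K}}^2.
\end{equation*}
Since $\mc{K}$ is infinite, $n(\kappa^*)\to\infty$ as $\kappa^*\to\infty$ through $\mc{K}$, so $\sup_{q\in Q}\beta\sbrack{2}(\kappa^*,q)\to 0$; monotonicity then extends this to all $\kappa\to\infty$, not only through $\mc{K}$.

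For the final assertion \eqref{equi to beta}, I would decompose $\beta(\kappa,q)=\beta\sbrack{2}(\kappa,q)+[\beta(\kappa,q)-\beta\sbrack{2}(\kappa,q)]$. The first summand vanishes uniformly for $q\in Q$ by (2). The remainder is expressed in \eqref{beta0 higher} as $\mp 2\pi\kappa$ times the trace of $R_0(\kappa)qC_+\ol{q}R(\kappa)qC_+\ol{q}R_0(\kappa)$, and \eqref{beta est} in Lemma~\ref{t:beta} guarantees precisely that this quantity tends to zero uniformly on bounded equicontinuous sets. The only mildly delicate point in the whole argument is the pigeonhole/monotonicity step in (3) $\Rightarrow$ (2); the remainder reduces to elementary Fourier manipulations and direct appeal to the operator-theoretic estimates already assembled in Section~\ref{S:2}.
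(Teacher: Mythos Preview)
Your proof is correct and follows essentially the same route as the paper for (1)$\Leftrightarrow$(2), for (2)$\Rightarrow$(3), and for the deduction of \eqref{equi to beta} from \eqref{beta est}. The one genuine difference is in closing the cycle of equivalences: the paper proves (3)$\Rightarrow$(1) directly via the Littlewood--Paley counting identity
\[
\|q\|_{\mathcal K}^2 \gtrsim \|q\|_{L^2}^2 + \sum_{N\in 2^{\mathbb N}} \#\{\kappa\in \mathcal K: \kappa< N\}\,\|q_N\|_{L^2}^2,
\]
whereas you prove (3)$\Rightarrow$(2) using the monotonicity of $\kappa\mapsto\beta\sbrack{2}(\kappa,q)$ together with the pigeonhole bound $n(\kappa^*)\beta\sbrack{2}(\kappa^*,q)\leq\|q\|_{\mathcal K}^2$. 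Both arguments are short and complete; yours is perhaps marginally more elementary in that it avoids any dyadic decomposition, while the paper's version has the mild advantage of displaying explicitly how the $\mathcal K$-norm weights the frequency shells, which is suggestive for how these norms are used later in the focusing equicontinuity argument.
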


\begin{proof}
To see that (1) and (2) are equivalent, we note that 
for any $0<\eta\leq 1$,
\begin{equation}
\norm{q_{>\kappa}}_{L^2}^2 \lesssim \beta\sbrack{2}(\kappa,q) \lesssim \eta \norm{q}_{L^2}^2 + \norm{q_{>\eta\kappa}}_{L^2}^2 .
\label{beta0 quadratic}
\end{equation}
Clearly, (2) implies (3).  That (3) implies (1) follows from the observation that 
\begin{align*}
    \|q\|_{\mathcal K}^2 &\gtrsim \|q\|_{L^2}^2 + \sum_{N\in 2^{\mathbb N}} \#\{\kappa\in \mathcal K: \kappa< N\}\;\|q_N\|_{L^2}^2.
\end{align*}

In order to deduce \eqref{equi to beta} from property (2), we must show that \eqref{beta0 higher} converges to zero as $\kappa\to \infty$ uniformly for $q\in Q$.  This follows from \eqref{beta est}.
\end{proof}

\begin{theorem}[Defocusing case]\label{T:defoc equi}
If $Q\subset H^\infty_+(\R)$ is bounded and equicontinuous in $L^2(\R)$, then the totality of states $Q^*=\{ e^{tJ\nabla\ham_-} q : q\in Q,\ t\in\R \} $ reached under the defocusing~\eqref{CM-DNLS} flow is also bounded and equicontinuous in $L^2(\R)$.
\end{theorem}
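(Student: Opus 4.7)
The plan is to exploit the conserved functional $\beta(\kappa,q)$ together with Lemma~\ref{L:equi via K}. Boundedness of $Q^*$ in $L^2(\R)$ is immediate from mass conservation, since $\|q(t)\|_{L^2}=\|q^0\|_{L^2}$ forces $\sup_{q\in Q^*}\|q\|_{L^2}=\sup_{q\in Q}\|q\|_{L^2}<\infty$. For equicontinuity, by Lemma~\ref{L:equi via K} it suffices to prove
\[
\sup_{q^0\in Q,\,t\in\R}\, \beta\sbrack{2}\bigl(\kappa, q(t)\bigr) \longrightarrow 0 \qtq{as} \kappa\to\infty,
\]
where $q(t)=e^{tJ\nabla \ham_-}q^0$ denotes the $H^\infty_+$ solution supplied by Theorem~\ref{T:GL}.

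First observe that in the defocusing case $\mc L_q = \mc L_0 + qC_+\ol q$ is a non-negative perturbation of $\mc L_0\geq 0$, because the quadratic form $\langle f,qC_+\ol q f\rangle = \|C_+(\ol q f)\|_{L^2}^2$ is non-negative. Hence $\mc L_q+\kappa$ is positive definite for every $\kappa>0$, Lemma~\ref{L:conserv law} applies at every positive spectral parameter, and
\[
\beta\bigl(\kappa, q(t)\bigr) = \beta\bigl(\kappa, q^0\bigr) \qtq{for all} t\in\R,\ \kappa>0.
\]
Since $Q$ is bounded and equicontinuous in $L^2$, the conclusion \eqref{equi to beta} of Lemma~\ref{L:equi via K} gives $\sup_{q^0\in Q}\beta(\kappa,q^0)\to 0$, and combining with the previous display yields $\sup_{q^0\in Q,\,t\in\R}\beta(\kappa,q(t))\to 0$ as $\kappa\to\infty$.

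The final step is to invoke the favorable sign in \eqref{beta0 higher}. In the defocusing case $\mp=+$, so
\[
\beta(\kappa,q) - \beta\sbrack{2}(\kappa,q) = 2\pi\kappa\, \tr\bigl\{ R_0(\kappa)\, qC_+\ol q\, R(\kappa)\, qC_+\ol q\, R_0(\kappa) \bigr\} \geq 0,
\]
the non-negativity coming from the form $B^*R(\kappa)B$ of the operator inside the trace, with $B=qC_+\ol q\,R_0(\kappa)$ and $R(\kappa)\geq 0$. Thus $\beta\sbrack{2}(\kappa,q(t))\leq \beta(\kappa,q(t))$ pointwise in $(t,q^0)$, and the uniform decay of $\beta\sbrack{2}$ along the orbit follows at once from that of $\beta$. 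The implication $(2)\Rightarrow(1)$ of Lemma~\ref{L:equi via K} then delivers the $L^2$-equicontinuity of $Q^*$.

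There is no serious obstacle in this argument: the entire theorem reduces to the observation that in the defocusing setting the higher-order part of $\beta$ has the right sign to be dominated by $\beta$ itself, so conservation of $\beta$ does all the work. The hard part of the story is the focusing analogue, treated next, where this sign is reversed and the higher-order term must be absorbed by hand using \eqref{op est 4}.
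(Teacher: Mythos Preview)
Your proof is correct and follows the same approach as the paper: conservation of $\beta$ together with the favorable sign of the remainder \eqref{beta0 higher} in the defocusing case gives $\beta\sbrack{2}\leq\beta$, and then Lemma~\ref{L:equi via K} closes the argument. Your direct observation that $\mc L_q=\mc L_0+qC_+\ol q\geq 0$ in the defocusing case (so positivity holds for every $\kappa>0$) is a small simplification over the paper's invocation of $\kappa_0(Q)$ from Proposition~\ref{P:Lax}, but the logic is otherwise identical.
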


\begin{proof}
By Proposition~\ref{P:Lax}, we may chose $\kappa_0=\kappa_0(Q)$ so that $\mc L_q+\kappa$ is positive definite for all $q\in Q$ and $\kappa\geq \kappa_0$.  Moreover, as noted in Lemma~\ref{L:conserv law}, this property remains true at all later times and $R(\kappa,q(t))$ is positive definite.  As we are in the defocusing case, \eqref{beta0 higher} then shows that $\beta\sbrack{2}(\kappa,q(t)) \leq \beta(\kappa,q(t))$.  Combining this with the conservation law \eqref{beta consv}, we deduce that
\begin{equation}
\sup_{q\in Q^*} \beta\sbrack{2}(\kappa,q) 
\leq \sup_{q\in Q^*} \beta(\kappa,q) 
= \sup_{q\in Q} \beta(\kappa,q) .
\label{beta defocus}
\end{equation}
By \eqref{equi to beta}, the right-hand side converges to zero as $\kappa\to\infty$.  Therefore, we conclude that the left-hand side converges to zero as $\kappa\to\infty$, which in view of Lemma~\ref{L:equi via K} yields the equicontinuity of the set $Q^*$.\end{proof}

The argument just presented does not adapt to the focusing case because the remainder \eqref{beta0 higher} has an unfavorable sign.

\begin{theorem}[Focusing case]\label{T:foc equi}
If $Q\subset H^\infty_+(\R)$ is bounded and equicontinuous in $L^2(\R)$ and satisfies \eqref{M<M*}, then the set of orbits $Q^*=\{ e^{tJ\nabla\ham_+} q : q\in Q,\ t\in\R\}$ reached under the focusing~\eqref{CM-DNLS} flow is bounded and equicontinuous in $L^2(\R)$.
\end{theorem}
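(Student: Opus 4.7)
\emph{Strategy.} Mirror the defocusing proof (Theorem~\ref{T:defoc equi}) via conservation of $\beta$ from Lemma~\ref{L:conserv law}. In the focusing case, the quantity $H(\kappa, q) := 2\pi\kappa \tr\{R_0 qC_+\bar q R qC_+\bar q R_0\} \geq 0$ satisfies $\beta\sbrack{2}(\kappa,q) - \beta(\kappa,q) = H(\kappa,q)$. The naive comparison $\beta\sbrack{2} \geq \beta$ is in the wrong direction, so one must \emph{subordinate} $H$ to $\beta\sbrack{2}$ using the mass hypothesis \eqref{M<M*} and the sharp inequality \eqref{op est 4}.

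\emph{Step 1 (positivity of $\mc L_q$).} The identity $\langle f, \mc L_q f\rangle = \||\partial|^{1/2}f\|^2 - \|C_+\bar q f\|^2$ together with \eqref{op est 4} yields $\mc L_q \geq \alpha(q) \mc L_0$ with $\alpha(q) := 1 - M(q)/(2\pi) > 0$. Consequently $R_0 \leq R \leq \alpha^{-1}R_0$ and $R(\kappa)^{1/2}$ is well-defined. Since $M$ is conserved along the flow, these properties persist for $q(t)$ at all times.

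\emph{Step 2 (operator bound).} I claim that
\[
\|C_+\bar q R(\kappa)^{1/2}\|_{\op}^2 \leq \gamma(q), \qquad \gamma(q) := \frac{M(q)/(2\pi)}{1 - M(q)/(2\pi)}.
\]
Applying \eqref{op est 4} to $R^{1/2} f$ and invoking $\mc L_0 = \mc L_q + qC_+\bar q$,
\[
\|C_+\bar q R^{1/2} f\|^2 \leq \tfrac{M}{2\pi}\bigl[\langle f, (I - \kappa R) f\rangle + \|C_+\bar q R^{1/2} f\|^2\bigr].
\]
Rearranging and using $I - \kappa R = \mc L_q R \geq 0$ gives the claim.

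\emph{Step 3 (main obstacle: sharp subordination).} The heart of the proof is the estimate
\[
H(\kappa, q) \leq c(M(q))\, \beta\sbrack{2}(\kappa, q), \qquad c(M) < 1 \text{ whenever } M < 2\pi.
\]
Naive Hilbert--Schmidt factorizations yield only $H \leq \gamma(M - \beta\sbrack{2})$, which does \emph{not} tend to zero as $\kappa\to\infty$. The key new ingredient is the Fourier-side identity
\[
\|C_-\bar q R_0(\kappa)\|_{\hs}^2 = \beta\sbrack{2}(\kappa, q)/(2\pi\kappa),
\]
which I would verify by direct kernel computation, combined with the sesquilinear decomposition $qC_+\bar q = |q|^2 - qC_-\bar q$ (valid on $H^{1/2}_+$). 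Expanding $H$ against this decomposition, the $qC_-\bar q$-diagonal contribution is controlled by $\gamma\beta\sbrack{2}$ via Step 2. Taming the $|q|^2$- and cross-terms (which individually possess divergent Hilbert--Schmidt norms) is the genuinely novel difficulty; I would employ the resolvent identities $R_0\, qC_+\bar q = I - R_0 R^{-1}$ and $qC_+\bar q\, R_0 = I - R^{-1} R_0$ to produce a telescoping cancellation, pinning down the sharp constant $c = M/(2\pi)$.

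\emph{Step 4 (conclusion).} Combining Step 3 with the identity $\beta\sbrack{2} = \beta + H$ and conservation of $\beta, M$ yields
\[
\beta\sbrack{2}(\kappa, q(t)) \leq \beta(\kappa, q^0)/\bigl(1 - c(M(q^0))\bigr)
\]
uniformly in $t$. By \eqref{equi to beta}, the right-hand side vanishes as $\kappa \to \infty$ uniformly on the equicontinuous set $Q$; Lemma~\ref{L:equi via K} then delivers equicontinuity of $Q^*$. Boundedness of $Q^*$ in $L^2$ is immediate from conservation of $M$; global existence of the smooth solutions defining $Q^*$ follows from Lemma~\ref{L:horizon} applied with the local-in-time equicontinuity just obtained.
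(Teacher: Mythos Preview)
Your Steps 1 and 2 are correct, as is the Fourier identity $\|C_-\bar q R_0(\kappa)\|_{\hs}^2 = \beta\sbrack{2}(\kappa,q)/(2\pi\kappa)$.  The gap is Step~3: you do not actually prove the subordination $H(\kappa,q)\leq c(M)\,\beta\sbrack{2}(\kappa,q)$ with $c(M)<1$, you only outline a plan.  The decomposition $qC_+\bar q = |q|^2 - qC_-\bar q$ produces terms such as $2\pi\kappa\,\tr\{R_0|q|^2 R\,|q|^2 R_0\}$ whose individual traces need not be finite for $q\in L^2_+$, and the promised ``telescoping cancellation'' via $R_0\,qC_+\bar q = I - R_0 R^{-1}$ is nowhere carried out.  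Even granting that some cancellation occurs, nothing you have written forces the resulting constant to be strictly below~$1$, which is exactly what Step~4 requires.  As written, the heart of the argument is an assertion, not a proof.

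The paper avoids this difficulty altogether by switching from criterion~(2) to criterion~(3) in Lemma~\ref{L:equi via K}.  Rather than seek a pointwise-in-$\kappa$ inequality with a sharp constant, it selects an infinite set $\mathcal K\subseteq 2^{\mathbb N}$ adapted to $Q$ and shows merely that
\[
\sum_{\kappa\in\mathcal K}\bigl|\beta(\kappa,q)-\beta\sbrack{2}(\kappa,q)\bigr|
\ \lesssim_M\ \|q\|_{L^2}^4
\]
\emph{uniformly} over $Q^*$ (bounded, not small).  The mechanism is purely operator-theoretic: after using your Step~1 to replace the middle $R$ by $R_0$, one exploits the monotonicity $R_0(\kappa)\leq R_0(1)$ and the summation identity $\sum_{\kappa\in\mathcal K}\kappa R_0(\kappa)^2 \leq R_0(1)$ to collapse the entire sum to the single quantity $\|C_+\bar q\,R_0(1)\,q\|_{\hs}^2$, which \eqref{hs improved} bounds by $\|q\|_{L^2}^4$.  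Conservation of $\beta$ then yields $\sup_{Q^*}\|q\|_{\mathcal K}<\infty$, and equicontinuity follows.  The mass hypothesis $M<2\pi$ enters only through the comparison $R\lesssim_M R_0$; no constant below~$1$ is ever needed.
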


\begin{proof}
Let
\begin{equation*}
M := \sup \{ \norm{q}_{L^2}^2 : q\in Q \}.
\end{equation*}
Under our hypotheses, $M<2\pi$.  As the $L^2$ norm is conserved by solutions to \eqref{CM-DNLS}, we also have that
\begin{align}\label{mass bdd}
\sup_{q\in Q^*} \norm{q}_{L^2}^2\leq M<2\pi.
\end{align}

In view of \eqref{mass bdd} and \eqref{op est 4},
\begin{align*}
\bigl\langle f, q C_+ (\ol q f)\bigr\rangle  = \| C_+ (\ol q f) \|^2_{L^2}
	\leq \tfrac{M}{2\pi} \langle f, \mc L_0 f \rangle,
\end{align*}
for every $q\in Q^*$.  This implies
\begin{align}\label{777}
\mc L_q \geq \bigl(1-\tfrac{M}{2\pi}\bigr) \mc L_0   \qtq{and so also }  R(\kappa, q) \leq \tfrac{2\pi}{2\pi - M}  R_0(\kappa)
\end{align}
for any $\kappa>0$ and $q\in Q^*$. 

As $Q$ is equicontinuous in $L^2(\R)$, by Lemma~\ref{L:equi via K} we may find an infinite set $\mathcal K\subseteq 2^{\mathbb N}$ so that
\begin{align}\label{10:07}
\sup_{q\in Q}\|q\|_{\mathcal K}<\infty.
\end{align}

Using \eqref{beta0 higher} and \eqref{777}, we may bound
\begin{align*}
\sum_{\kappa\in \mathcal K} \bigl| \beta(\kappa,q)-\beta\sbrack{2}(\kappa,q) \bigr| 
\lesssim_M  \sum_{\kappa\in \mathcal K} \kappa \tr\bigl\{R_0(\kappa)q C_+ \ol{q}R_0(\kappa)qC_+\ol{q}R_0(\kappa)\bigr\}.
\end{align*}

To continue, we note that for $\kappa\geq 1$ we have $R_0(\kappa)\leq R_0(1)$ as operators on $L^2_+$ and so
$$
R_0(\kappa)q C_+ \ol{q}R_0(\kappa)qC_+\ol{q}R_0(\kappa)\leq R_0(\kappa)q C_+ \ol{q}R_0(1)qC_+\ol{q}R_0(\kappa).
$$
Thus, cycling the trace we obtain
\begin{align*}
\sum_{\kappa\in \mathcal K} \bigl| \beta(\kappa,q)-\beta\sbrack{2}(\kappa,q) \bigr| 
&\lesssim   \tr\bigl\{\sqrt{R_0(1)} qC_+\ol{q}\sum_{\kappa\in \mathcal K} \kappa R_0(\kappa)^2 q C_+ \ol{q} \sqrt{R_0(1)}\bigr\}.
\end{align*}
Observing that $\sum_{\kappa\in \mathcal K} \kappa R_0(\kappa)^2 \leq R_0(1)$ and invoking \eqref{hs improved}, we deduce that
\begin{align*}
\sum_{\kappa\in \mathcal K} \bigl| \beta(\kappa,q)-\beta\sbrack{2}(\kappa,q) \bigr| 
&\lesssim   \tr\bigl\{\sqrt{R_0(1)} qC_+\ol{q}R_0(1) q C_+ \ol{q} \sqrt{R_0(1)}\bigr\}\\
&\lesssim \norm{C_+\ol{q} R_0(1) q}_{\hs}^2 \lesssim \|q\|_{L^2}^4 \lesssim M^2,
\end{align*}
uniformly for $q\in Q^*$.  Using the conservation of $\beta$ (cf. Lemma~\ref{L:conserv law}) and \eqref{10:07}, we conclude that 
\begin{align*}
\sup_{q\in Q^*} \sum_{\kappa\in \mathcal K} \beta\sbrack{2}(\kappa,q)  \lesssim \sup_{q\in Q} \sum_{\kappa\in \mathcal K} \beta\sbrack{2}(\kappa,q)  + M^2<\infty,
\end{align*}
which in view of Lemma~\ref{L:equi via K} establishes the equicontinuity of $Q^*$ in the $L^2(\R)$ topology.
\end{proof}

\section{Explicit formula}\label{S:4}

The climax of this section is a proof of the explicit formula \eqref{magic formula} in a restricted case:

\begin{theorem}\label{T:smooth EF}
For any  $H^\infty_+(\R)$ solution $q(t)$ to~\eqref{CM-DNLS} with initial data satisfying $\langle x\rangle q^0 \in L^2(\R)$,
\begin{equation}\label{smooth EF}
q(t,z) = \tfrac{1}{2\pi i} I_+\big( (X + 2t\mc{L}_{q^0} - z)^{-1} q^0 \bigr)
\end{equation}
for all $z$ with $\Im z>0$.
\end{theorem}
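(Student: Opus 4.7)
The plan is to reduce \eqref{smooth EF} at general $t$ to a base case at $t=0$ by conjugating the operator $X + 2t\mc L_{q^0}$ with the propagator $U(t;0)$ from Proposition~\ref{P:Pflow}. The base case is the Cauchy integral formula for Hardy-space functions: for $f\in L^2_+(\R)$ with $\langle x\rangle f\in L^2(\R)$,
\begin{equation*}
\tfrac{1}{2\pi i}\, I_+\bigl((X - z)^{-1} f\bigr)
= \tfrac{1}{2\pi i}\int \tfrac{f(x)}{x-z}\,dx
= f(z) \qtq{for} \Im z > 0,
\end{equation*}
the last equality being the complex form of the harmonic extension \eqref{PIF11}.

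The core of the argument is the operator identity
\begin{equation*}
[X, \mc P(q)] = 2\,\mc L(q)
\end{equation*}
on Schwartz-decaying Hardy functions. Splitting $\mc P = i\partial^2 \pm 2q\partial C_+\ol q$, one checks directly that $[X, i\partial^2] = -2i\partial = 2\mc L_0$. For the nonlinear piece, $X$ commutes with multiplication by $q$ and $\ol q$, and $\partial$ commutes with $C_+$, so $[X, q\partial C_+\ol q] = q\bigl(-C_+\ol q + \partial[X, C_+]\ol q\bigr)$; the commutator $[X, C_+]$ is a rank-one map into the constants (as seen on the Fourier side by differentiating the indicator $\mathbf{1}_{\xi\geq 0}$), and $\partial$ annihilates constants, so the remainder vanishes. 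Assembling the pieces gives $[X, \mc P(q)] = 2\mc L_0 \mp 2qC_+\ol q = 2\mc L(q)$.

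Now set $Y(t) := U(t;0)^* X U(t;0)$ on the $U$-invariant class of $H^\infty_+(\R)$ functions with $\langle x\rangle\cdot \in L^2(\R)$ (invariance coming from the last assertion of Proposition~\ref{P:Pflow}). Using $\tfrac{d}{dt}[U(t;0)^* X U(t;0)] = U(t;0)^*[X, \mc P(q(t))] U(t;0)$, the commutator identity, and \eqref{L conj},
\begin{equation*}
\tfrac{d}{dt} Y(t) = 2\, U(t;0)^* \mc L(q(t)) U(t;0) = 2\mc L_{q^0},
\end{equation*}
so $Y(t) = X + 2t\mc L_{q^0}$. Combined with $q^0 = U(t;0)^* q(t)$ from \eqref{L conj}, this says
\begin{equation*}
(X + 2t\mc L_{q^0} - z)^{-1} q^0 = U(t;0)^* (X - z)^{-1} q(t).
\end{equation*}
Applying $\tfrac{1}{2\pi i} I_+$ and using $I_+ \circ U(t;0)^* = I_+$ — the dual of $U(t;0) \cdot 1 = 1$, which is in turn the integrated form of $\mc P \cdot 1 = 0$ noted in the introduction — collapses the right-hand side to $\tfrac{1}{2\pi i}I_+((X - z)^{-1} q(t))$, which the base case applied to $q(t)$ identifies with $q(t,z)$.

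The principal obstacle is operator-theoretic. The multiplication $X$ is only symmetric on $L^2_+(\R)$, the formal inverse $(X - z)^{-1}$ fails to map Hardy into Hardy, and the whole meaning of $(X + 2t\mc L_{q^0} - z)^{-1}$ as a bounded operator on $L^2_+(\R)$ rests on a realization of $X + 2t\mc L_{q^0}$ as a self-adjoint operator constructed earlier in Section~\ref{S:4}. The identities above are therefore first established on the dense subspace of $f\in H^\infty_+(\R)$ with $\langle x\rangle f \in L^2(\R)$, where multiplication by $1/(x-z)$ is unambiguous, $I_+$ converges absolutely, and the weighted persistence bound in Proposition~\ref{P:Pflow} ensures the subspace is preserved by $U(t;0)$; passage to the genuine resolvent identity on $L^2_+(\R)$ then follows by continuity within the operator-theoretic framework set up earlier in Section~\ref{S:4}.
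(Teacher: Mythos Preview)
Your approach is the paper's: the commutator $[X,\mc P]=2\mc L_q$, the conjugation $U(t;0)^*XU(t;0)=X+2t\mc L_{q^0}$ (the paper packages this as showing $\psi(t):=[XU(t;0)-U(t;0)(X+2t\mc L_{q^0})]\psi_0$ solves the $\mc P$-ODE with zero data), the resolvent identity \eqref{1222}, and the reduction to the Cauchy integral formula via $I_+\circ U(t;0)^*=I_+$.

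Two corrections to your last paragraph. First, $X$ and $X+2t\mc L_{q^0}$ are maximally \emph{accretive} on $L^2_+(\R)$, not self-adjoint, and $(X-z)^{-1}$ \emph{does} map $L^2_+$ to $L^2_+$ for $\Im z>0$; see \eqref{X 2}. Second, the step $I_+\circ U(t;0)^*=I_+$ is not merely formal duality with ``$U(t;0)1=1$'' since $1\notin L^2_+$: the paper makes this rigorous via Lemma~\ref{t:P1=0}, showing $\|U(t;0)^*\chi_y-\chi_y\|_{L^2}\to 0$ as $y\to\infty$ with $\chi_y(x)=\tfrac{iy}{x+iy}$, which is exactly the quantitative form of $\mc P 1=0$ you allude to.
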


Ultimately, we will prove Theorem~\ref{t:magic formula}, which covers all $L^2_+(\R)$ solutions by taking a limit of smooth solutions. As a foundation for this, we will work to understand the basic objects of the explicit formula in the full generality of $q\in L^2_+(\R)$, rather than just the case $q\in H^\infty_+(\R)$ treated in Theorem~\ref{T:smooth EF}.

We begin with a few necessary preliminaries.  Our first lemma represents the key outgrowth of the fact that $\mc P1 = 0$.  As the constant function $1$ does not belong to the natural domain of $\mc P$, we use the sequence of approximations $\chi_y(x):=\tfrac{iy}{x+iy}$ which converge to unity (uniformly on compact sets) as $y\to\infty$.

\begin{lemma}\label{t:P1=0}
Let $q(t)$ be a global $H^\infty_+(\R)$ solution of \eqref{CM-DNLS} and let $U(t;t_0)$ denote the associate family of unitary operators constructed in Proposition~\ref{P:Pflow}.  Then 
\begin{gather}\label{U chi} 
\limsup_{y\to\infty}   \| U^*(t;0)\chi_y - \chi_y\|_{L^2} = 0 
\end{gather}
for any time $t\in\R$.  Recall that $\chi_y(x) := \tfrac{iy}{x+iy}$.
\end{lemma}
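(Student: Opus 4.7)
The strategy is to reduce the claim to an estimate on $\|\mc P(q(s))\chi_y\|_{L^2}$ and to extract decay in $y$ from the heuristic $\mc P\cdot 1 = 0$. The latter is made quantitative by the cancellation $C_+\ol{q} = 0$, which holds because $q\in L^2_+$ forces $\wh{\ol{q}}$ to be supported on $(-\infty,0]$.

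First I would establish a Duhamel identity of the form
\begin{equation*}
U^*(t;0)\chi_y - \chi_y = -\int_0^t U^*(s;0)\,\mc P(q(s))\chi_y\,ds,
\end{equation*}
from which unitarity of $U^*(s;0)$ yields
\begin{equation*}
\|U^*(t;0)\chi_y - \chi_y\|_{L^2} \leq |t|\sup_{s\in[0,t]}\|\mc P(q(s))\chi_y\|_{L^2}.
\end{equation*}
To derive this identity, I would pair with $f\in H^\infty_+(\R)$, use $\langle U^*(t;0)\chi_y,f\rangle = \langle \chi_y, U(t;0)f\rangle$, differentiate in $t$ via \eqref{Pode}, and then move $\mc P$ onto $\chi_y$ using the formal anti-self-adjointness $\mc P^*=-\mc P$ on smooth vectors. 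This latter property is a routine integration-by-parts calculation that uses $[C_+,\partial]=0$, and it is justified because $\chi_y\in H^\infty_+(\R)$ (all derivatives of the rational function $\chi_y$ are square-integrable) and $U(s;0)f\in H^\infty_+(\R)$ by the persistence of regularity in Proposition~\ref{P:Pflow}.

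Next I would compute $\mc P(q)\chi_y = i\partial^2\chi_y \pm 2q\,\partial C_+(\ol{q}\chi_y)$ in $L^2$. The explicit Fourier transform $\wh{\chi_y}(\xi) = \sqrt{2\pi}\,y\,e^{-y\xi}\mathbf{1}_{\xi>0}$ gives $\|\partial^2\chi_y\|_{L^2} = O(y^{-3/2})$ immediately. For the nonlinear term, the cancellation $C_+\ol{q} = 0$ combined with a Fourier computation of the convolution $\wh{\ol{q}}*\wh{\chi_y}$ yields the explicit identity
\begin{equation*}
C_+(\ol{q}\chi_y)(x) = \frac{y\,A_y(q)}{\sqrt{2\pi}\,(y-ix)},\qquad A_y(q) := \int_0^\infty \overline{\wh{q}(\eta)}\,e^{-y\eta}\,d\eta,
\end{equation*}
with $|A_y(q)|\leq \|q\|_{L^2}/\sqrt{2y}$ by Cauchy--Schwarz. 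Differentiating, bounding pointwise by $y|A_y(q)|/(x^2+y^2)$, and combining with $\|q\|_{L^2}$ gives $\|q\,\partial C_+(\ol{q}\chi_y)\|_{L^2}\lesssim \|q\|_{L^2}^2\,y^{-3/2}$.

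Since the mass $\|q(s)\|_{L^2}=\|q(0)\|_{L^2}$ is conserved, these bounds hold uniformly in $s$, and so
\begin{equation*}
\|U^*(t;0)\chi_y-\chi_y\|_{L^2}\lesssim |t|\bigl(1+\|q(0)\|_{L^2}^2\bigr)\,y^{-3/2}\to 0 \quad\text{as}\quad y\to\infty.
\end{equation*}
The main obstacle is pinpointing and exploiting the cancellation $C_+\ol{q}=0$: naive bounds on $q\,\partial C_+(\ol{q}\chi_y)$ using $\|\chi_y\|_{L^\infty}=O(1)$ would deliver only an $O(1)$ estimate, not decay in $y$. This cancellation is the rigorous incarnation of $\mc P\cdot 1=0$ and the precise reason $\chi_y$ behaves as an approximate fixed vector of the $\mc P$-flow.
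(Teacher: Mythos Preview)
Your proof is correct and follows the same overall architecture as the paper: reduce to bounding $\|\mc P(q(s))\chi_y\|_{L^2}$ via the Duhamel/fundamental-theorem identity, then show this quantity vanishes as $y\to\infty$ uniformly over the time interval.

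The difference lies in how the nonlinear part of $\mc P\chi_y$ is handled. The paper writes
\[
\mc P\chi_y = i\chi_y'' \pm 2qC_+\bigl(\ol q'(\chi_y-1)\bigr) \pm 2qC_+(\ol q\,\chi_y'),
\]
and then appeals to the $H^1_+$-precompactness of the orbit $\{q(s):|s|\le|t|\}$ (so in particular $\|q\|_{L^\infty}$ and $\|q'\|_{L^2}$ bounds) together with dominated convergence for the term $\ol q'(\chi_y-1)$; this yields only qualitative convergence. You instead compute $C_+(\ol q\,\chi_y)$ explicitly in Fourier variables and discover that it equals $\tfrac{A_y(q)}{\sqrt{2\pi}}\chi_y$, a scalar multiple of $\chi_y$ itself. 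This is a cleaner incarnation of the cancellation $C_+\ol q=0$ and buys you two things the paper's argument does not: a quantitative rate $\|\mc P(q)\chi_y\|_{L^2}=O(y^{-3/2})$, and dependence only on the conserved mass $\|q\|_{L^2}$ rather than on any $H^1$ control of the orbit. Your route is thus more elementary and more informative, at the cost of a short explicit Fourier calculation.
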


\begin{proof}
Given $t\in\R$, let $Q=\{q(s):|s|\leq |t|\}$, which is evidently precompact in $H^1_+(\R)$.  We first claim that
\begin{equation}\label{E:P1=0}
\lim_{y\to\infty} \bnorm{ \mc P(q) \chi_y }_{L^2} = 0\quad\text{uniformly for $q\in Q$.}
\end{equation}
To see this, we write 
\begin{equation*}
\mc{P} \chi_y = i\chi''_y \pm 2qC_+\big( \ol{q}'(\chi_y-1) \big) \pm 2qC_+(\ol{q}\chi'_y) ,
\end{equation*}
and use~\eqref{op est 4} to estimate
\begin{equation*}
\norm{\mc P\chi_y}_{L^2} \lesssim \snorm{\chi''_y}_{L^2} + \norm{q}_{L^\infty} \norm{\ol{q}'(\chi_y-1)}_{L^2} + \norm{q}_{L^\infty} \norm{q}_{L^2} \snorm{ \chi'_y}_{L^\infty} .
\end{equation*}
As $Q\subset H^1_+(\R)$ is precompact, we deduce \eqref{E:P1=0}.

Turning now to \eqref{U chi}, we note that $\chi_y\in H^\infty_+(\R)$ and so
\begin{align*}
\tfrac{d}{dt}  \langle \chi_y, U(t;0)\psi_0 \rangle 
	= - \langle \mc P(q(t)) \chi_y, U(t;0)\psi_0 \rangle
\end{align*}
for any $\psi_0\in L^2_+$.  Thus, by the Fundamental Theorem of Calculus, 
\begin{align*}
\bigl\| U(t;0)^*\chi_y-\chi_y\bigr\|_{L^2} \leq \int_{-|t|}^{|t|} \| \mc P(q(s)) \chi_y\|_{L^2}\,ds , 
\end{align*}
which converges to zero as $y\to\infty$ by \eqref{E:P1=0}.
\end{proof}

As noted above, $\chi_y(x)\to 1$ as $y\to\infty$, uniformly on compact sets.  Correspondingly, one may view $\langle \chi_y, q\rangle$ as an approximation for the total integral of $q$.  Building on this idea, we now define the \emph{unbounded} linear functional $I_+$ appearing in the \eqref{smooth EF}.  As we will see from the equivalent representations in \eqref{I+ def'}, this is closer to twice the integral of $q$.

\begin{definition} The (unbounded) linear functional $I_+$ is defined by
\begin{equation}\label{I+ def}
I_+(f) := \lim_{y\to\infty} \sqrt{2\pi} \int_0^\infty y e^{-y\xi} \, \wh f(\xi) \,d\xi 
\end{equation}
with domain $D(I_+)$ given by the set of those $f\in L^2_+(\R)$ for which the limit exits.
\end{definition}

By computing the Fourier transform of $\chi_y$ and by noting that $\chi_y+\smash[b]{\ol\chi_y}$ is equal to $2\pi y$ times the Poisson kernel, we find the following equivalent definitions of $I_+$:
\begin{equation}\label{I+ def'}
I_+(f) =  \lim_{y\to\infty} \bigl\langle \chi_y, f \bigr\rangle =  \lim_{y\to\infty}  \int \tfrac{2y^2}{x^2+y^2} f(x) \,dx = \lim_{y\to\infty} 2\pi y f(iy) 
\end{equation}
and that $D(I_+)$ is comprised of those $f\in L^2_+(\R)$ for which these limits exist.

Let us now describe the operator $X$ appearing in \eqref{smooth EF}.  If we were working in $L^2(\R)$, then $f(x)\mapsto e^{-itx}f(x)$ defines a unitary group, the generator of which is multiplication by $x$. (The factor $-i$ appearing here is the usual convention from quantum mechanics.)  However, we are working in the Hardy space $L^2_+(\R)$ and for $t>0$, multiplication by $e^{-itx}$ does not preserve this space.  The natural analogue on the Hardy space $L^2_+(\R)$ is
\begin{equation}\label{X 1}
e^{-itX} f = C_+\bigl( e^{-itx} f \bigr) = \tfrac{1}{\sqrt{2\pi}} \int_0^\infty e^{i\xi x} \widehat f(\xi + t)\,d\xi ,
\end{equation}
and we define the operator $X$ as the generator of this semigroup:
\begin{equation*}
D(X) = \bigl\{ f\in L^2_+ (\R): \widehat f\in H^1\bigl([0,\infty)\bigr) \bigr\} \qtq{and} \widehat{Xf}(\xi) =i\tfrac{d{\widehat f}}{d\xi}(\xi)\qtq{for} f\in D(X).
\end{equation*}

The semigroup \eqref{X 1} is clearly a (strongly continuous) contraction semigroup.  The famous Hille--Yoshida Theorem identifies the generators of such semigroups; see \cite{EN,Reed1975}.  Unfortunately, the nomenclature for these generators is not fully settled: they may be \emph{accretive} or \emph{dissipative}, there is a sign ambiguity, as well as the choice of whether or not to include the imaginary unit $i$.  Evidently, one should select the convention that best describes the operators one is working with and for this reason, we adopt the following:

\begin{convent}
An operator $T$ on a Hilbert space will be called \emph{accretive} if
\begin{equation}\label{accretive}
\Im \langle f, Tf\rangle \leq 0 \qtq{for all}  f\in D(T).
\end{equation}
It is further \emph{maximally accretive} if it admits no proper accretive extension.
\end{convent}

In this way, $T$ is accretive in our sense if and only if $iT$ is accretive in the sense of \cite{Reed1975} if and only if $-iT$ is dissipative in the sense of \cite{EN}.  Moreover, the Hille--Yoshida Theorem then says that an operator $T$ is maximally accretive if and only if $e^{-itT}$ defines a contraction semigroup.  In particular, the operator $X$ introduced above is maximally accretive.

The spectrum of $X$ constitutes the closed lower half-plane.  For $\Im z>0$, the resolvent is given by
\begin{equation}\label{X 2}
(X-z)^{-1} f = \tfrac{f(x)-f(z)}{x-z}
\end{equation}
where $f(z)$ is defined by \eqref{PIF11}.  Together with the functional $I_+$, this gives rise to the following formulation of the Cauchy integral formula:
\begin{equation}\label{CIF}
f(z)  = \tfrac{1}{2\pi i} I_+\bigl( (X-z)^{-1} f \bigr) = \lim_{y\to\infty} \tfrac{1}{2\pi i} \bigl\langle \chi_y, (X-z)^{-1} f \bigr\rangle
\end{equation}
valid for all $f\in L^2_+(\R)$ and $\Im z>0$.

The commutator of $\mc P$ and the operator $X$ will be important for deriving the explicit formula \eqref{smooth EF}.  As a stepping stone, we record the commutator of $X$ with a generic Toeplitz operator $f\mapsto C_+(gf)$.  This may be obtained from straightforward computations in Fourier variables. It also arises naturally in the setting of the Benjamin--Ono equation, see \cite{Sun2021}*{Lem.~3.1}.  

\begin{lemma}\label{L:comm}
If $g\in H^{\infty}(\R)$ and $f\in D(X)$, then $C_+(gf)\in D(X)$ and
\begin{equation}\label{comm 1}
[X,C_+g] f = \tfrac{i}{2\pi} I_+(f) C_+ g.
\end{equation}
\end{lemma}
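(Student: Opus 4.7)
The plan is to verify \eqref{comm 1} directly on the Fourier side. Since $\wh f$ is supported in $[0,\infty)$, the convolution identity \eqref{268} gives
\begin{equation*}
\widehat{C_+(gf)}(\xi) = \tfrac{1}{\sqrt{2\pi}} \int_0^\infty \wh g(\xi-\eta)\, \wh f(\eta)\, d\eta \qtq{for} \xi>0.
\end{equation*}
Applying $i\partial_\xi$, rewriting $\partial_\xi \wh g(\xi-\eta) = -\partial_\eta \wh g(\xi-\eta)$, and integrating by parts in $\eta$ on $[0,\infty)$ produces a boundary contribution at $\eta = 0$ together with a bulk term. Using $\widehat{Xf}(\eta) = i\tfrac{d\wh f}{d\eta}(\eta)$, the bulk term equals $\widehat{C_+(g\cdot Xf)}(\xi)$, while the boundary term equals $\tfrac{i}{\sqrt{2\pi}}\, \wh g(\xi)\, \wh f(0^+)$; note also that $\wh g(\xi) = \widehat{C_+g}(\xi)$ for $\xi>0$.

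The key identification that closes the loop is
\begin{equation*}
I_+(f) = \sqrt{2\pi}\, \wh f(0^+) \qtq{whenever} f\in D(X).
\end{equation*}
This is essentially an Abel-type lemma: the hypothesis $f\in D(X)$ means $\wh f\in H^1([0,\infty))$, so $\wh f$ admits a trace $\wh f(0^+)$ and $\int_0^\infty y e^{-y\xi}\, \wh f(\xi)\,d\xi \to \wh f(0^+)$ as $y\to\infty$, which, substituted into \eqref{I+ def}, produces the asserted formula. Combined with the Fourier-side computation above, this yields
\begin{equation*}
\widehat{X C_+(gf)}(\xi) = \tfrac{i}{2\pi} I_+(f)\, \widehat{C_+g}(\xi) + \widehat{C_+(g\cdot Xf)}(\xi) \qtq{for} \xi>0,
\end{equation*}
which rearranges into the claimed commutator identity \eqref{comm 1}.

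Finally I would verify that $C_+(gf)\in D(X)$, a membership needed merely to make sense of the left-hand side of \eqref{comm 1}. The bulk term above lies in $L^2([0,\infty))$ because $g\cdot Xf\in L^2(\R)$, and the boundary term lies in $L^2([0,\infty))$ because $g\in H^\infty$ forces $\wh g\in L^2$. Hence $\partial_\xi \widehat{C_+(gf)}\in L^2([0,\infty))$, which places $C_+(gf)$ in $D(X)$. The only mildly delicate point is the identification $\wh f(0^+) = I_+(f)/\sqrt{2\pi}$; once that is in hand, everything else reduces to routine Fourier calculus and a single integration by parts.
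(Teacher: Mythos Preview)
Your proof is correct and follows exactly the route the paper indicates: the paper gives no detailed argument, remarking only that \eqref{comm 1} ``may be obtained from straightforward computations in Fourier variables'' (with a pointer to \cite{Sun2021}), and your Fourier-side integration by parts together with the identification $I_+(f)=\sqrt{2\pi}\,\wh f(0^+)$ for $f\in D(X)$ is precisely such a computation. The only cosmetic point is that $g\in H^\infty$ does not ensure $\wh g$ is classically differentiable, so the cleanest way to justify your step is to place the $\xi$-derivative on the convolution via $\partial_\xi(\wh g*\wh f)=\wh g*\partial_\xi\wh f$ and compute $\partial_\xi\wh f$ distributionally on $\R$ (picking up $\wh f(0^+)\delta_0$), which reproduces your boundary and bulk terms without needing smoothness of $\wh g$.
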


We are now ready to compute the commutator of $\mc P$ with $X$:

\begin{lemma}
If $q\in H^\infty_+(\R)$ and $\langle x\rangle q \in L^2(\R)$, then
\begin{equation}
[X,\mc P] = 2\mc L_q .
\label{comm 2}
\end{equation}
\end{lemma}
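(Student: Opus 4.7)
The proof is a direct commutator calculation. Expanding $\mc P = i\partial^2 \pm 2 q\partial C_+\ol q$ and working on the Fourier side (where $X$ acts as $i\partial_\xi$), two elementary computations give
\begin{equation*}
[X, i\partial^2] = 2\mc L_0 \qtq{and} [X, \partial] = -\Id.
\end{equation*}
The first already accounts for the free part of $2\mc L_q$, so the remaining task is to show $[X, q\partial C_+\ol q] = -qC_+\ol q$.

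To apply Lemma~\ref{L:comm}, I would first use the product rule to split
\begin{equation*}
q\partial C_+\ol q = \partial\bigl(qC_+\ol q\bigr) - q'\,C_+\ol q,
\end{equation*}
which gives
\begin{equation*}
[X, q\partial C_+\ol q] = [X,\partial]\,qC_+\ol q + \partial\,[X, qC_+\ol q] - [X,q']\,C_+\ol q - q'\,[X, C_+\ol q].
\end{equation*}
The decisive observation is that, for $q\in H^\infty_+(\R)$, both $\wh{\ol q}$ and $\wh{\ol{q}'}$ are supported in $(-\infty,0]$, so the $L^2$ functions $C_+(\ol q)$ and $C_+(\ol{q}')$ vanish identically. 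Applying Lemma~\ref{L:comm} with $g=\ol q$ then gives $[X, C_+\ol q]\,f = \tfrac{i}{2\pi}I_+(f)\,C_+(\ol q) = 0$, eliminating the last term. Expanding the middle commutator as multiplication by $q$ composed with the Toeplitz operator $C_+\ol q$, only the $[X,q]$ contribution survives, and Lemma~\ref{L:comm} yields
\begin{equation*}
[X, qC_+\ol q]\, f = \tfrac{i}{2\pi}\, I_+\bigl(C_+(\ol q f)\bigr)\, q = \tfrac{i}{2\pi}\, \langle q, f\rangle\, q,
\end{equation*}
the second equality using the identification $I_+(h)=\sqrt{2\pi}\,\wh h(0^+)$ together with the convolution formula at $\xi=0$. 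The same reasoning gives $[X, q']\,C_+\ol q\, f = \tfrac{i}{2\pi}\langle q,f\rangle\, q'$.

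Assembling the four terms and using $[X,\partial] = -\Id$,
\begin{equation*}
[X, q\partial C_+\ol q]\, f = -qC_+(\ol q f) + \partial\bigl(\tfrac{i}{2\pi}\langle q, f\rangle\, q\bigr) - \tfrac{i}{2\pi}\langle q, f\rangle\, q' = -qC_+(\ol q f),
\end{equation*}
since $\partial(\langle q,f\rangle q) = \langle q,f\rangle\, q'$ so the two rank-one terms cancel. Multiplication by $\pm 2$ and adding $[X, i\partial^2] = 2\mc L_0$ yields $[X, \mc P] = 2\mc L_0 \mp 2 qC_+\ol q = 2\mc L_q$. The main obstacle is not any single conceptual step but rather careful sign bookkeeping and domain management: the hypotheses $q\in H^\infty_+$ and $\langle x\rangle q\in L^2$ put $q$ and $q'$ in $D(X)$ and make every multiplication and Toeplitz factor bounded on $L^2_+$, so each use of Lemma~\ref{L:comm} and each operator product above is legitimate when $f$ ranges over a dense subspace of $D(X)$, with the identity extending by continuity to the natural domain.
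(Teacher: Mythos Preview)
Your proof is correct and follows essentially the same approach as the paper: both compute the commutator via Lemma~4.3, use $[X,\partial]=-\Id$, and exploit that $C_+\ol q=0$ as a function since $\ol q$ has nonpositive Fourier support. The only organizational difference is that you rewrite $q\partial C_+\ol q = \partial(qC_+\ol q) - q'C_+\ol q$ and then observe that the two resulting rank-one terms $\tfrac{i}{2\pi}\langle q,f\rangle\,q'$ cancel, whereas the paper expands $[X, q\partial C_+\ol q]$ directly as $[X,q]\partial C_+\ol q + q[X,\partial]C_+\ol q + q\partial[X,C_+\ol q]$ and disposes of the first term in one stroke by noting $I_+\bigl(C_+(\ol q f)'\bigr)=0$ (``derivatives vanish at zero frequency''); this avoids the product-rule detour and the subsequent cancellation.
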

\begin{proof}
For $f\in L^2_+(\R)$ with $\langle x\rangle f\in H^2(\R)$, we use Lemma~\ref{L:comm} to compute
\begin{equation*}
[X,\mc P]f = -2i f'\pm \tfrac{i}{\pi}I_+\big\{ C_+(\ol{q}f)' \big\} q \mp 2qC_+(\ol{q}f) .
\end{equation*}
As derivatives vanish at zero frequency, we recognize that the right-hand side above is equal to $2\mc L_q f$.
\end{proof}

We now turn our attention to the operator $X+2t\mc L_q$, whose resolvent lies at the center of the explicit formula.  We begin with the case $q\equiv 0$.

As a a preliminary notion of $X+2t\mc L_0$, we observe that
\begin{align}\label{1513}
(Xf+2t\mc L_0f)\fwh(\xi) &= i \partial_\xi \wh{f}(\xi) + 2 t \xi \wh f(\xi) \\
&= e^{it\xi^2} i\partial_\xi \bigl[e^{-it\xi^2} \wh f(\xi)\bigr] \qtq{for all} f\in D(X)\cap D(\mc L_0). \notag
\end{align}
While it is evident that with this domain $X+2t\mc L_0$ is accretive, it is not maximally accretive.  This can be remedied by taking the closure:

\begin{lemma}
The closure of the naive sum \eqref{1513} is given by
\begin{equation}\label{A0 1}
[ (X+2t\mc L_0) f ] \fwh (\xi) = e^{it\xi^2} i\partial_\xi \big[ e^{-it\xi^2} \wh{f}(\xi) \big] ,
\end{equation}
with domain
\begin{equation}\label{A0 2}
D(X+2t\mc L_0) = \{ f \in L^2_+(\R) : e^{-it\xi^2}\wh{f} \in H^1([0,\infty)) \}.
\end{equation}
The operator $X+2t\mc L_0$ is maximally accretive, generating the contraction semigroup
\begin{equation}\label{BCH formula}
\big[ e^{-is(X+2t\mc L_0)} f \big]\fwh(\xi) = e^{is^2t} e^{-2ist(\xi +s)} \wh{f}(\xi+s),
\end{equation}
and is unitarily conjugated to $X$ by the free Schr\"odinger flow:
\begin{equation}\label{conj to X}
X+2t\mc L_0 =  e^{-it\Delta} X e^{it\Delta}  .
\end{equation}
\end{lemma}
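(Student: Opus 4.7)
The cleanest path is to take the conjugation identity \eqref{conj to X} as the \emph{defining} property and read off everything else from it. Concretely, since $e^{it\Delta}$ acts on the Fourier side as multiplication by the unimodular function $e^{-it\xi^2}$, it preserves $L^2_+(\R)$ and is unitary there. Define
\begin{equation*}
A_t := e^{-it\Delta} X e^{it\Delta}, \qquad D(A_t) = \{ f \in L^2_+(\R) : e^{it\Delta}f \in D(X)\}.
\end{equation*}
Being a unitary conjugate of the maximally accretive operator $X$, the operator $A_t$ is automatically maximally accretive. Since $D(X) = \{f \in L^2_+(\R) : \wh f \in H^1([0,\infty))\}$ and $\wh{e^{it\Delta}f}(\xi) = e^{-it\xi^2}\wh f(\xi)$, the domain $D(A_t)$ is exactly \eqref{A0 2}. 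Applying $X$ on the Fourier side as $i\partial_\xi$ and then multiplying back by $e^{it\xi^2}$ yields the action formula \eqref{A0 1}.

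The next task is to show that $A_t$ is actually the closure of the naive sum defined on $D(X)\cap D(\mc L_0)$. On that smaller domain the product rule
\[
e^{it\xi^2}\, i\partial_\xi\bigl[e^{-it\xi^2}\wh f(\xi)\bigr] = i\partial_\xi \wh f(\xi) + 2t\xi \wh f(\xi)
\]
shows $A_t$ agrees with $X+2t\mc L_0$ pointwise. It remains to check that $D(X)\cap D(\mc L_0)$ is a core for $A_t$. Given $f\in D(A_t)$, set $g := e^{-it\xi^2}\wh f \in H^1([0,\infty))$ and introduce a frequency cutoff $f_N$ via $\wh{f_N}(\xi) := e^{it\xi^2}\varphi(\xi/N) g(\xi)$, where $\varphi$ is a smooth compactly supported bump equal to $1$ near the origin. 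Compact support of $\varphi(\cdot/N)$ immediately places $f_N$ in $D(X)\cap D(\mc L_0)$, and the identity
\[
(A_t f_N)\fwh(\xi) = \varphi(\xi/N)(A_t f)\fwh(\xi) + \tfrac{i}{N}\varphi'(\xi/N)\, g(\xi)
\]
combined with dominated convergence shows $f_N\to f$ and $A_t f_N \to A_t f$ in $L^2$. This proves convergence in the graph norm, so $D(X)\cap D(\mc L_0)$ is a core and the closure of the naive sum equals $A_t$, establishing \eqref{A0 1}-\eqref{A0 2} together with \eqref{conj to X} and maximal accretivity.

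Finally, the semigroup identity \eqref{BCH formula} follows at once from
\[
e^{-is A_t} = e^{-it\Delta}\, e^{-isX}\, e^{it\Delta},
\]
computed Fourier-side from right to left: $e^{it\Delta}$ multiplies by $e^{-it\xi^2}$, the formula \eqref{X 1} for $e^{-isX}$ then shifts $\xi\mapsto \xi+s$, and a second application of $e^{-it\Delta}$ restores the factor $e^{it\xi^2}$. Collecting the phases yields $e^{it\xi^2 - it(\xi+s)^2} = e^{-2its\xi - its^2} = e^{is^2 t}e^{-2ist(\xi+s)}$, which is precisely the right-hand side of \eqref{BCH formula}. I expect the only nontrivial step is the core argument for the naive sum, but the explicit Fourier-side cutoff above handles it cleanly; every other claim is a direct consequence of unitary conjugation to $X$.
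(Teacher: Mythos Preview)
Your proof is correct and follows essentially the same strategy as the paper: identify the target operator first (you via the unitary conjugate $e^{-it\Delta}Xe^{it\Delta}$, the paper via the generator of the semigroup \eqref{BCH formula}), read off maximal accretivity and the action/domain formulas, and then verify that $D(X)\cap D(\mc L_0)$ is a core. Your explicit frequency-cutoff argument spells out what the paper calls ``the elementary task of verifying that our operator is contained in the closure of the naive sum''; note only the harmless omission of a unimodular factor $e^{it\xi^2}$ in front of the $\tfrac{i}{N}\varphi'(\xi/N)g(\xi)$ term in your displayed identity for $(A_tf_N)\fwh$.
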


\begin{proof}
It is perhaps easiest to work backwards.  One first observes that \eqref{BCH formula} defines a strongly continuous semigroup of contractions and then that the generator is the operator defined by \eqref{A0 1} and \eqref{A0 2} and satisfies \eqref{conj to X}.

As the last step, we check that this generator is indeed the closure of $X+2t\mc L_0$ when defined on $D(X)\cap D(\mc L_0)$.  As maximally accretive operators are closed, this amounts to the elementary task of verifying that our operator is contained in the closure of the naive sum.  
\end{proof}

With the proper meaning of $X+2t\mc L_0$ now set, we may turn our attention to its resolvent:

\begin{proposition}\label{t:A0}
For $t\in \R$ and $\Im z > 0$, 
\begin{equation}\label{A0 def}
A_0(t,z) := (X+2t\mc L_0-z)^{-1} \qtq{satisfies} \norm{A_0}_{\op} \leq \tfrac{1}{\Im z}.
\end{equation}
For any $t\neq 0$, we have
\begin{align}
\norm{A_0(t,z)}_{L^2_+ \to L^\infty_+} + \norm{A_0(t,z)}_{L^1_+ \to L^2_+} &\lesssim (|t| \Im z)^{-\frac12}, \label{A0 5}\\
\norm{A_0(t,z)}_{L^1_+ \to L^\infty_+} &\lesssim |t|^{-1}. \label{A0 4} 
\end{align}
The range of $A_0$ lies in the domain of $I_+$,
\begin{equation}
I_+\bigl( A_0(t,z) f \bigr)  = 2\pi i \big[ e^{it\Delta} f \big](z) \qtq{for all} t\in\R,\ \ \Im z>0,
\label{A0 6} 
\end{equation}
and the composition $I_+\circ A_0$ is bounded:
\begin{align}\label{Schrod Poisson}
\bigl| I_+\bigl( A_0(t,z) f \bigr) \bigr| \lesssim \min\bigl\{ |\Im z|^{-1/2} \|f\|_{L^2}, \ |t|^{-1/2} \| f\|_{L^1} \bigr\}.
\end{align}
\end{proposition}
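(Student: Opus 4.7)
The plan is to use the conjugation identity $A_0(t, z) = e^{-it\Delta}(X - z)^{-1} e^{it\Delta}$ from \eqref{conj to X} as the structural backbone. The operator norm bound \eqref{A0 def} is then immediate: $e^{\pm it\Delta}$ are $L^2$-unitaries preserving $L^2_+(\R)$, and $\|(X-z)^{-1}\|_\op \leq 1/\Im z$ by the Hille--Yoshida bound for the maximally accretive generator $X$.

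For the Schr\"odinger-flow identity \eqref{A0 6}, I would apply the Cauchy integral formula \eqref{CIF} to $(X - z)^{-1} e^{it\Delta} f \in L^2_+(\R)$, obtaining $I_+((X-z)^{-1}[e^{it\Delta} f]) = 2\pi i [e^{it\Delta} f](z)$. It then remains to observe that $I_+$ is invariant under $e^{-it\Delta}$ on its domain, which is transparent from \eqref{I+ def}: as $y\to\infty$ the weight $y e^{-y\xi}$ concentrates at $\xi = 0$, where the multiplier $e^{it\xi^2}$ equals $1$. The bound \eqref{Schrod Poisson} is then immediate from \eqref{A0 6}: the estimate $|g(z)| \leq \|g\|_{L^\infty}$ from the Poisson integral, combined with the dispersive estimate $\|e^{it\Delta} f\|_{L^\infty} \lesssim |t|^{-1/2}\|f\|_{L^1}$, gives the $|t|^{-1/2}\|f\|_{L^1}$ bound, while \eqref{266} together with the $L^2$-isometry of $e^{it\Delta}$ gives the $(\Im z)^{-1/2}\|f\|_{L^2}$ bound.

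The dispersive estimates \eqref{A0 5} and \eqref{A0 4} form the technical heart. The $L^1 \to L^2$ half of \eqref{A0 5} falls out of the conjugation: $\|A_0 f\|_{L^2} = \|(X-z)^{-1}[e^{it\Delta} f]\|_{L^2}$, and using the formula $(X-z)^{-1} g = (g - g(z))/(y-z)$ one has $\|(X-z)^{-1} g\|_{L^2} \leq 2\|g\|_{L^\infty}\|(y-z)^{-1}\|_{L^2} \lesssim \|g\|_{L^\infty}/\sqrt{\Im z}$, so plugging in $\|e^{it\Delta} f\|_{L^\infty} \lesssim |t|^{-1/2}\|f\|_{L^1}$ closes this estimate. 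For the $L^2 \to L^\infty$ half and the $L^1 \to L^\infty$ bound \eqref{A0 4}, I would expand $A_0$ via the semigroup integral $i\int_0^\infty e^{isz} e^{-is(X + 2t\mc L_0)}\, ds$ together with \eqref{BCH formula}, reducing both bounds to controlling oscillatory integrals with quadratic phases $t\xi^2$ and $t\eta^2$. The key ingredients are truncated Fresnel-type estimates for $e^{it\xi^2}$ (via completion of the square and contour deformation) together with the nonstationary-phase lower bound $|\phi'(\eta)| = |(z-y) - 2t\eta| \geq \max(\Im z, 2|t|\eta)$ on the real axis, which after a single integration by parts yields enough decay to push through the factor of $|t|^{-1}$ in \eqref{A0 4}. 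The delicate point — which I expect to be the main obstacle — is orchestrating the interplay between the damping $e^{-\eta\Im z}$ and the oscillatory cancellation so that the pointwise kernel bound $|K(x,y)| \lesssim |t|^{-1}$ is truly uniform in $x, y$, and not polluted by the growing prefactors that appear if one naively completes the square.
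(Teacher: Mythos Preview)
Your treatment of \eqref{A0 def}, \eqref{A0 6}, \eqref{Schrod Poisson}, and the $L^1_+\to L^2_+$ half of \eqref{A0 5} matches the paper's argument essentially verbatim: conjugation by $e^{it\Delta}$, the Cauchy integral formula \eqref{CIF}, the dispersive estimate, and \eqref{266}.

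For the $L^2_+\to L^\infty_+$ bound and for \eqref{A0 4}, your route diverges. You propose to pass through the semigroup integral and \eqref{BCH formula}, arriving at a double oscillatory integral in frequency. The paper instead stays with the conjugation $A_0 = e^{-it\Delta}(X-z)^{-1}e^{it\Delta}$ and the explicit formula \eqref{X 2}, feeding in the single Van der Corput estimate
\[
\bigl\|e^{-it\Delta}\tfrac{1}{x-z}\bigr\|_{L^\infty} \lesssim |t|^{-1/2},
\]
obtained from $\mc F\bigl(\tfrac{1}{x-z}\bigr) = i\sqrt{2\pi}\,e^{-i\xi z}\mathbf{1}_{\xi<0}$ and the quadratic-phase Van der Corput lemma. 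For $L^2_+\to L^\infty_+$ this yields a two-line proof. For \eqref{A0 4}, inserting the explicit Schr\"odinger kernel into $e^{-it\Delta}(X-z)^{-1}e^{it\Delta}$ produces the closed form
\[
[A_0 f](x) = \tfrac{i}{2|t|}\!\int_{y/t<x/t}\! e^{-i(x^2-y^2)/4t + i(x-y)z/2t} f(y)\,dy \;-\; (e^{it\Delta}f)(z)\,\Bigl[e^{-it\Delta}\tfrac{1}{\cdot-z}\Bigr](x),
\]
whose $L^1_+\to L^\infty_+$ bound by $|t|^{-1}$ is then immediate from the two estimates already in hand.

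Your sketch for \eqref{A0 4} has a genuine gap. The claimed bound $|\phi'(\eta)| = |(z-y)-2t\eta| \geq 2|t|\eta$ is false: when $\Re z - y = 2t\eta_0$ for some $\eta_0>0$ one has $|\phi'(\eta_0)| = \Im z$, not $2|t|\eta_0$, so a single integration by parts against this phase delivers only $(\Im z)^{-1}$, not the required $|t|^{-1}$. The oscillatory-integral route can in fact be pushed through---changing variables to $s=\eta-\xi$ linearizes the $\xi$-integral and recovers exactly the paper's kernel after recognizing $\int_0^\infty e^{i\xi\alpha}\,d\xi$ as a boundary distribution---but the nonstationary-phase shortcut you outline does not close. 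The paper's explicit kernel computation sidesteps this difficulty entirely and is the cleaner path.
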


\begin{proof}
The existence of the inverse \eqref{A0 def} for $\Im z>0$ and the associated norm bound are basic consequences of the fact that $X+2t\mc L_0$ is maximally accretive.

From~\eqref{conj to X} we see that
\begin{equation}
A_0(t,z) = e^{-it\Delta} (X-z)^{-1} e^{it\Delta} .
\label{A0 3}
\end{equation}
For $t\neq 0$, the free Schr\"odinger propagator is given by the explicit formula
\begin{equation}
\big[e^{it\Delta}f\big](x) = \tfrac{1}{\sqrt{4\pi i t}}\int e^{i(x-y)^2/4t}f(y)\,dy, 
\label{Schrod kernel}
\end{equation}
where we use the principal branch of the square root, so that the argument of $\sqrt{4\pi it}$ is $\tfrac{\pi}{4}\sgn(t)$.  From this, one easily derives the standard dispersive estimates
\begin{equation}
\bnorm{e^{it\Delta}}_{L^{\frac{p}{p-1}}\to L^p} \lesssim |t|^{\frac{1}{p}-\frac{1}{2}} \quad\text{for }2\leq p\leq \infty .
\label{Schrod dispersion}
\end{equation}
Combining this with \eqref{266}, we deduce
\begin{align}\label{Schrod Poisson 0}
\bigl|[e^{it\Delta}f](z)\bigr|\lesssim\min\bigl\{|t|^{-\frac12} \|f\|_{L^1} , |\Im z|^{-\frac12} \|f\|_{L^2} \bigr\}
\end{align}
for all $t\neq 0$ and $\Im z>0$.

We now turn to \eqref{A0 5}.  Using that $\mc F(\frac1{x-z}) = \sqrt{2\pi} i e^{-i\xi z} \chi_{\xi<0}$ and the Van der Corput Lemma in the form given in \cite[Corollary~VIII.1.2]{big Stein}, we may estimate
\begin{align}\label{2:31}
\bigl\|e^{-it\Delta} \bigl(\tfrac1{x-z}\bigr)\bigr\|_{L^\infty}\lesssim \Bigl\| \int_{-\infty}^0 e^{ix\xi+it\xi^2 -i\xi \Re z}e^{\xi \Im z}\,d\xi \Bigr\|_{L^\infty}\lesssim |t|^{-\frac12}.
\end{align}
Employing \eqref{A0 3}, \eqref{Schrod Poisson 0}, and~\eqref{X 2}, we may thus bound
\begin{align*}
\norm{A_0(t,z)f}_{L^\infty}
&\lesssim |t|^{-\frac12} \bigl\| \tfrac{(e^{it\Delta}f)(x)}{x-z}\bigr\|_{L^1} +\bigl\|e^{-it\Delta} \bigl(\tfrac1{x-z}\bigr)\bigr\|_{L^\infty} \bigl|(e^{it\Delta}f)(z)\bigr|\\
& \lesssim |t|^{-\frac12}\|e^{it\Delta}f\|_{L^2} \| (x-z)^{-1}\|_{L^2} +(|t| \Im z)^{-\frac12} \|f\|_{L^2}\\
&  \lesssim (|t| \Im z)^{-\frac12} \|f\|_{L^2}.
\end{align*}

Proceeding similarly, we obtain
\begin{align*}
\|A_0(t,z)f\|_{L^2}
&\lesssim \bigl\|(X-z)^{-1} e^{it\Delta} f\bigr\|_{L^2} \\
& \lesssim\| (x-z)^{-1}\|_{L^2} \bigl[\|e^{it\Delta}f\|_{L^\infty}+ \bigl|(e^{it\Delta}f)(z)\bigr|\bigr] \\
&  \lesssim (|t| \Im z)^{-\frac12} \|f\|_{L^1},
\end{align*}
which completes the proof of \eqref{A0 5}.

Employing \eqref{A0 3}, \eqref{Schrod kernel}, and~\eqref{X 2}, a straightforward computation yields
\begin{align*}
[A_0(t,z)f](x)&= \tfrac{i}{2|t| } \int_{\frac{y}t<\frac{x}t} e^{-\frac{i(x^2-y^2)}{4t} + \frac{i(x-y)z}{2t}} f(y)\, dy  - \bigl( e^{it\Delta}f\bigr)(z)\cdot \Bigl[e^{-it\Delta} \bigl(\tfrac1{\cdot-z}\bigr)\Bigr](x).
\end{align*}
Claim \eqref{A0 4} follows from this, \eqref{Schrod Poisson 0} and \eqref{2:31}.

By definition, the resolvent $A_0(t,z)$ carries $f\in L^2_+$ into the domain~\eqref{A0 2}, which is clearly a subset of $D(I_+)$.  The identity~\eqref{A0 6} follows immediately from~\eqref{A0 3} and~\eqref{CIF}.  Together with \eqref{Schrod Poisson 0}, this identity shows that $I_+\circ A_0$ satisfies \eqref{Schrod Poisson}.
\end{proof}

This completes our discussion of $X+2t \mc L_0$.   We turn now to the more difficult question of realizing $X+2t \mc L_q$ as a maximally accretive operator.  

If $q\in L^\infty\cap L^2_+$, then $qC_+\ol q$ is an $L^2$-bounded self-adjoint operator.  Under this hypothesis, $X+2t \mc L_q$ is naturally defined on \eqref{A0 2} and is maximally accretive; see, for example, \cite[\S III.1]{EN}.  In this way, we may define
\begin{align}\label{A for nice}
A(t,z;q) := (X + 2t\mc L_q - z)^{-1} \qtq{for all} t\in\R \qtq{and} \Im z >0,
\end{align}
whenever $q\in L^2_+(\R)\cap L^\infty(\R)$.

For general $q\in L^2_+(\R)$, the construction of this operator is not so simple.  Indeed, we find ourselves unable to apply the textbook theorems for perturbations of accretive operators, such as the analogue of the Kato--Rellich Theorem.  Rather, we will build the operator as a limit of operators with bounded potentials $q_n$ in concert with the abstract theory developed by Trotter \cite{Trotter} and Kato \cite{Kato}.  Our first step in this direction is the following lemma, which we will also need for the proof of Proposition~\ref{t:q formula}:

\begin{lemma}\label{L:882}
Fix $T>0$ and let $Q\subset L^2_+(\R)$ be bounded and equicontinuous.  For every $\eps>0$ there exists $b=b(\eps, Q)$ so that $\Im z\geq b$ implies both
\begin{equation}\label{1555}
\bnorm{ 2t \,C_+\ol{g}A_0(t,z) q }_{\op} \leq \eps \qtq{and}
	|t| \bnorm{ C_+ \ol q A_0(t,z) }_{\op} \bnorm{ A_0(t,z) q }_{\op} \leq \tfrac{\eps}{\Im z}
\end{equation}
uniformly for $|t|\leq T$ and $g,q \in Q$.
\end{lemma}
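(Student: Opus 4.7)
The plan is to prove both inequalities via a Littlewood--Paley decomposition $q=q_{\leq N}+q_{>N}$, with $N=N(\eps,Q)$ chosen large first by equicontinuity and $b=b(\eps,Q,N)$ chosen large afterwards. The key tools are the three dispersive bounds for $A_0(t,z)$ from Proposition~\ref{t:A0}, namely $\|A_0\|_{\op}\leq(\Im z)^{-1}$, $\|A_0\|_{L^1_+\to L^2_+}+\|A_0\|_{L^2_+\to L^\infty_+}\lesssim(|t|\Im z)^{-1/2}$, and $\|A_0\|_{L^1_+\to L^\infty_+}\lesssim|t|^{-1}$, together with Bernstein's inequality $\|q_{\leq N}\|_{L^\infty}\lesssim N^{1/2}\|q\|_{L^2}$. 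Throughout, set $M_0:=\sup_{q\in Q}\|q\|_{L^2}$.

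For the first inequality, I would split $q=q_{\leq N}+q_{>N}$ and treat the two pieces of $2t\,C_+\ol g A_0 q$ separately. For the low piece, Bernstein makes $q_{\leq N}f\in L^2_+$ with norm $\lesssim N^{1/2}\|q\|_{L^2}\|f\|_{L^2}$, so the $L^2_+\to L^\infty_+$ bound on $A_0$, pairing with $\ol g\in L^2$, and $L^2$-boundedness of $C_+$ give
\[
\bnorm{2t\, C_+\ol g\, A_0(t,z)\, q_{\leq N}}_{\op}\lesssim T^{1/2}N^{1/2}(\Im z)^{-1/2}M_0^2.
\]
For the high piece, $q_{>N}f\in L^1_+$ has norm at most $\|q_{>N}\|_{L^2}\|f\|_{L^2}$, and the $L^1_+\to L^\infty_+$ bound contributes $|t|^{-1}$, which cancels the outer $2|t|$ exactly, yielding
\[
\bnorm{2t\, C_+\ol g\, A_0(t,z)\, q_{>N}}_{\op}\lesssim M_0\sup_{q\in Q}\|q_{>N}\|_{L^2}.
\]
Choose $N$ via equicontinuity so the high piece is $\leq\eps/2$, then choose $b$ large so the low piece is $\leq\eps/2$.

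For the second inequality, the naive uniform bounds $\|C_+\ol q A_0\|_{\op}+\|A_0 q\|_{\op}\lesssim(|t|\Im z)^{-1/2}\|q\|_{L^2}$ (from the $L^2_+\to L^\infty_+$ and $L^1_+\to L^2_+$ bounds respectively) give $|t|\|C_+\ol q A_0\|_{\op}\|A_0 q\|_{\op}\lesssim M_0^2/\Im z$, short of the target only by a constant. I would therefore split each factor along $q=q_{\leq N}+q_{>N}$ and expand into four products. The low-low term uses $\|A_0\|_{\op}\leq(\Im z)^{-1}$ with Bernstein to give $TNM_0^2/(\Im z)^2$; the two mixed terms pair Bernstein on one side against the dispersive $(|t|\Im z)^{-1/2}$ on the other, yielding $T^{1/2}N^{1/2}M_0\|q_{>N}\|_{L^2}/(\Im z)^{3/2}$; the high-high term contributes $\|q_{>N}\|_{L^2}^2/\Im z$. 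Choose $N$ by equicontinuity so $\sup_{q\in Q}\|q_{>N}\|_{L^2}^2\leq\eps/4$ to control the high-high term, and then $b\gtrsim TNM_0^2/\eps$ absorbs the low-low and mixed terms into $\eps/\Im z$.

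The only mildly delicate bookkeeping is keeping the net power of $|t|$ non-negative inside each piece so the bounds remain finite as $|t|\to 0^+$ and uniform as $|t|\to T$; in every case this is achieved either by $|t|^{1/2}\leq T^{1/2}$ absorption or by exact cancellation with the $|t|^{-1}$ from the $L^1_+\to L^\infty_+$ dispersive bound, so no genuine obstacle arises.
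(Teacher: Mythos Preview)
Your proposal is correct and follows essentially the same strategy as the paper: a high/low frequency splitting combined with the dispersive bounds \eqref{A0 def}, \eqref{A0 5}, \eqref{A0 4} for $A_0$, choosing $N$ first by equicontinuity and then $b$ large. The only cosmetic difference is in the first estimate: the paper decomposes both $g$ and $q$ and handles the low--low piece via Bernstein on each factor together with $\|A_0\|_{\op}\leq(\Im z)^{-1}$, whereas you decompose only $q$ and use the $L^2_+\to L^\infty_+$ bound on $A_0$ for the low-frequency piece---your variant is slightly more economical but otherwise equivalent.
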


\begin{proof}
We begin with the first claim in \eqref{1555}, decomposing $q = q_{\leq N} + q_{>N}$ and similarly $g = g_{\leq N} + g_{>N}$, for a frequency cutoff $N\geq 1$ that will be chosen shortly.  In this way, \eqref{A0 4} yields
\begin{align}\label{1565}
2|t| \bnorm{ C_+\ol{g}A_0 q_{> N} &+ C_+\ol{g}_{>N} A_0 q_{\leq N} }_{\op} \notag\\
&\lesssim |t| \|A_0\|_{L^1_+\to L^\infty_+} \big[\norm{q_{> N}}_{L^2} \norm{g}_{L^2} +  \norm{q}_{L^2} \norm{g_{>N}}_{L^2}\big] \\
&\lesssim \norm{q_{> N}}_{L^2} \norm{g}_{L^2} +  \norm{q}_{L^2} \norm{g_{>N}}_{L^2}.  \notag
\end{align}
As $Q$ is bounded and equicontinuous, we may choose $N=N(Q)$ large to ensure that LHS\eqref{1565} is bounded by $\frac{\eps}{2}$.

For the remaining low-low frequency contribution, we use Bernstein's inequality to estimate
\begin{equation*}
\bnorm{ 2tC_+\ol{g}_{\leq N}A_0q_{\leq N} }_{\op} 
\leq 2|t| \norm{A_0}_{\op} \norm{g_{\leq N}}_{L^\infty}\norm{q_{\leq N}}_{L^\infty} 
\lesssim \tfrac{TN}{\Im z} \norm{g}_{L^2} \norm{q}_{L^2} .
\end{equation*}
As $Q$ is bounded and $N(Q)$ is finite, we may choose $b\geq 1$ sufficiently large so as to render this contribution smaller that $\frac\eps2$.

The treatment of the second claim in  \eqref{1555} follows a parallel path: 
\begin{align*}
\norm{ C_+ \ol q A_0 }_{\op} 
	&\lesssim \| q_{> N}\|_{L^2} \|A_0\|_{L^2_+\to L^\infty_+} + \| q_{\leq N}\|_{L^\infty} \|A_0\|_\op ,\\
\norm{ A_0 q }_{\op}
	&\lesssim \| q_{> N}\|_{L^2} \|A_0\|_{L^1_+\to L^2_+} + \| q_{\leq N}\|_{L^\infty} \|A_0\|_\op,
\end{align*}
and so by \eqref{A0 def} and \eqref{A0 5},
\begin{align*}
 |t| \cdot \norm{ C_+ \ol q A_0 }_{\op} \cdot \norm{ A_0 q }_{\op}  &\lesssim \tfrac{1}{\Im z}\Bigl[ \| q_{> N}\|_{L^2}^2 + \tfrac{NT}{\Im z} \| q\|_{L^2}^2 \Bigr] .
\end{align*}
As previously, we first choose $N=N(Q)$ and then $b(\eps, Q)$ to render this contribution acceptable.
\end{proof}

We are now prepared to construct the operator $X + 2t\mc L_q$ for $q\in L^2_+(\R)$, as well as its resolvent.  Further mapping properties of the resolvent will be elaborated in Proposition~\ref{t:A}.

\begin{proposition}\label{t:A'}
Fix $q\in L^2_+(\R)$.   For any $t\in\R$, $\Im z>0$, and any sequence $q_n\in L^2_+(\R)\cap L^\infty_+(\R)$ converging to $q$ in $L^2$-sense, the operators
\begin{equation}\label{916}
A_n(t,z) := (X + 2t\mc L_0  \mp 2t q_n C_+ \ol{q}_n - z)^{-1}
\end{equation}
converge in norm to an operator $A(t,z;q)$ that satisfies
\begin{equation}\label{918}
A(t,z;q) (X f + 2t\mc L_q f - zf) = f \qtq{whenever} f\in D(X)\cap D(\mc L_0).
\end{equation} 
Moreover, $z\mapsto A(t,z)$ is the resolvent of a maximally accretive operator, which we denote $X + 2t\mc L_q$.  Thus
\begin{equation}\label{920}
A(t,z;q) = (X + 2t\mc L_q - z)^{-1}   \qtq{satisfies} \norm{A(t,z;q)}_{\op} \leq \tfrac{1}{\Im z}
\end{equation}
for any $t\in\R$, $\Im z>0$, and any $q\in L^2_+(\R)$.
\end{proposition}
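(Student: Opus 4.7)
The plan is to first handle each bounded approximation $q_n$ using textbook perturbation theory, then pass to the $q$-limit via a Woodbury-type factorization of the resolvent whose middle factor is controlled by Lemma~\ref{L:882}, and finally to upgrade the resulting norm-limit into a bona fide m-accretive resolvent through the abstract Trotter--Kato theory.  For each $q_n\in L^2_+\cap L^\infty_+$ the perturbation $V_n:=\mp 2t q_n C_+\ol{q}_n$ is bounded and self-adjoint on $L^2_+$, so $X+2t\mc L_0+V_n$ is maximally accretive on $D(X)\cap D(\mc L_0)$ (the accretive part of $X+2t\mc L_0$ being preserved by the real-symmetric perturbation), and hence $A_n(t,z)$ exists for every $\Im z>0$ with $\|A_n\|_\op\leq 1/\Im z$.

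The central idea is to factor $V_n=\mp 2t(q_n)\cdot(C_+\ol{q}_n)$ and apply the Woodbury identity to write
\begin{equation*}
A_n \;=\; A_0 \pm 2t\,A_0\,q_n\,\bigl(\Id \mp 2t\,C_+\ol{q}_n A_0 q_n\bigr)^{-1} C_+\ol{q}_n A_0,
\end{equation*}
valid whenever the middle factor is invertible.  Since $q_n\to q$ in $L^2$, the set $Q:=\{q_n\}\cup\{q\}$ is $L^2$-compact, hence bounded and equicontinuous, so Lemma~\ref{L:882} supplies a threshold $b=b(Q)$, uniform in $n$, with $\|2t\,C_+\ol{q}_n A_0 q_n\|_\op\leq\tfrac12$ whenever $\Im z\geq b$.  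The Neumann inverse is then well-defined with norm at most $2$.  The mapping bounds \eqref{A0 5} readily control the remaining factors: for example
\[
\|A_0(q_n-q)\|_\op \;\leq\; \|A_0\|_{L^1_+\to L^2_+}\,\|q_n-q\|_{L^2}\;\to\; 0,
\]
and the analogous estimate governs $\|C_+(\ol{q}_n-\ol{q})A_0\|_\op$.  Combining these, every factor of the Woodbury expression converges in operator norm as $n\to\infty$, so $A_n(t,z)$ converges in operator norm to the object $A(t,z;q)$ obtained by replacing $q_n$ by $q$ in the Woodbury formula.  This limit depends only on $q$.

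To verify \eqref{918}, I would fix $f\in D(X)\cap D(\mc L_0)$ and pass to the limit in $A_n(Xf+2t\mc L_0 f+V_n f-zf)=f$.  Applying the bound $\|gC_+\ol{h}R_0(\kappa)\|_\op\lesssim\|g\|_{L^2}\|h\|_{L^2}$ from Lemma~\ref{L:rc} with the telescoping $q_nC_+\ol{q}_n-qC_+\ol{q}=(q_n-q)C_+\ol{q}_n+qC_+(\ol{q}_n-\ol{q})$ yields $\|(q_nC_+\ol{q}_n-qC_+\ol{q})R_0(\kappa)\|_\op\to 0$, which after writing $f=R_0(\kappa)g$ with $g\in L^2$ gives $V_n f\to V f$ in $L^2$, where $V:=\mp 2tqC_+\ol{q}$.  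Together with $\|A_n\|_\op\leq 1/\Im z$ this delivers \eqref{918}.

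Finally, at a fixed $z_0$ with $\Im z_0\geq b$, the operators $A_n(t,z_0)$ are resolvents of maximally accretive operators with uniformly bounded norm converging in norm to $A(t,z_0;q)$, and by \eqref{918} the range of $A(t,z_0;q)$ contains the dense subspace $D(X)\cap D(\mc L_0)$.  The abstract Trotter--Kato approximation theorem then furnishes a unique maximally accretive operator---which we christen $X+2t\mc L_q$---such that $A(t,z_0;q)=(X+2t\mc L_q-z_0)^{-1}$; it also extends this identification to all of $\{\Im z>0\}$ and upgrades the norm convergence $A_n(t,z)\to A(t,z;q)$ to the whole upper half-plane, yielding \eqref{920}.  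The main obstacle---and what makes the whole scheme work---is the uniform Neumann-series bound provided by Lemma~\ref{L:882}: the Woodbury factorization succeeds only because that lemma tames $C_+\ol{q}_n A_0 q_n$ despite neither $q_n$ nor $\ol{q}_n$ being a bounded multiplier in the limit, a situation where no textbook perturbation theorem applies.
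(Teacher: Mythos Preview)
Your argument is correct and follows essentially the same strategy as the paper: expand the resolvent as a Neumann series (your Woodbury identity is precisely the paper's series \eqref{936} summed in closed form), use Lemma~\ref{L:882} to secure convergence for $\Im z\geq b$, and invoke Trotter--Kato.  Two small points are worth flagging.  First, the domain of the maximally accretive operator $X+2t\mc L_0+V_n$ is $D(X+2t\mc L_0)$ from \eqref{A0 2}, not $D(X)\cap D(\mc L_0)$; the naive sum on the latter is only accretive and must be closed (see the paragraph preceding \eqref{A for nice}).  Second, to verify that the limiting pseudo-resolvent is non-degenerate, the paper shows $iyA(iy)f\to -f$ via the second estimate in Lemma~\ref{L:882}, whereas you deduce dense range directly from \eqref{918}; both are valid hypotheses for Trotter--Kato, and your route has the virtue of not needing that second estimate.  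One imprecision: Trotter--Kato by itself yields only strong convergence of resolvents on the full half-plane; the extension of \emph{norm} convergence from $\Im z\geq b$ to all of $\Im z>0$ requires a short separate argument via the first resolvent identity and the uniform bound $\|A_n(z)\|\leq 1/\Im z$.
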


\begin{proof}
To treat the sequence $q_n$ and its limit $q$ in a parallel fashion, it is convenient to set $q_\infty=q$.   Convergence of the sequence guarantees that $Q:=\{q_n: 1\leq n\leq \infty \}$ is $L^2$ precompact and so bounded and equicontinuous.  By Lemma~\ref{L:882}, we may choose $b(\eps)$ so that
\begin{align}\label{930}
\sup_{1\leq n,m\leq\infty} \ \bnorm{ 2tC_+\smash[b]{\ol q}_nA_0(t,z) q_m }_{\op} \leq \eps \qtq{whenever} \Im z \geq b(\eps).
\end{align}
We particularly single out the value corresponding to $\eps=\tfrac12$, for which we write $b_0$.

For $\Im z \geq b_0$ and any $n\in \N$, the estimate \eqref{930} guarantees convergence of the resolvent series
\begin{equation}\label{936}
A_n(t,z) = A_0(t,z) \pm 2t A_0(t,z) q_n  \sum_{\ell \geq 0} \big[\pm 2t C_+ \smash[b]{\ol q}_n A_0(t,z) q_n \big]^{\ell} C_+\ol{q}_n A_0(t,z). 
\end{equation}
Note also that by \eqref{A0 5}, we have
\begin{equation}
\sup_{1\leq n\leq\infty} |t|^{\frac12} \| A_0(t,z) q_n \|_\op + |t|^{\frac12} \| C_+\smash[b]{\ol q}_n A_0(t,z) \|_\op  \lesssim_Q (\Im z)^{-\frac12} .
\end{equation}

It follows from the preceding discussion that for $\Im z \geq b_0$ we may define a bounded operator $A_\infty(t,z)$ by setting $n=\infty$ in \eqref{936} and, by a simple telescoping argument, we see that
\begin{equation}\label{944}
\| A_n(t,z) - A_\infty(t,z) \|_\op \to 0 \qtq{and so} \| A_\infty(t,z) \|_\op \leq \tfrac{1}{\Im z}
\end{equation}
for all $\Im z \geq b_0$ and $t\in\R$.

As the resolvents $A_n(t,z)$ converge at one point $z\in \C_+$, they must converge at all points in $\C_+$.  However, it is not true in general that the limit of resolvents of accretive operators is the resolvent of an accretive operator. Rather it is merely a pseudo-resolvent; see \cite{Kato}.  The kernel and range of a pseudo-resolvent are independent of the spectral parameter; however, it is only for a true resolvent that the kernel is trivial and the range is dense.

To verify that $A_\infty(z)$ is a true resolvent, we will show that 
\begin{equation}\label{952}
\lim_{y\to\infty} iy A_\infty(iy)f = -f \qtq{for all} f\in L^2_+(\R).
\end{equation}
This is always true for the resolvent of a maximally accretive operator (as one may see by integrating the semigroup); in particular, 
\begin{equation}\label{952'}
\lim_{y\to\infty} iy A_0(iy)f = -f \qtq{for all} f\in L^2_+(\R).
\end{equation}

By comparison, Lemma~\ref{L:882} shows that
\begin{align*}
\limsup_{y\to\infty}  y |t| \bigl\| A_0(iy) q \|_\op \bigl\| C_+ \ol q A_0(iy) \|_\op
 	\sum_{\ell \geq 0} \Big[|t| \bigl\| C_+ {\ol q} A_0(iy) q\bigr\|_\op \Big]^{\ell} =0
\end{align*}
for any fixed $q\in L^2_+(\R)$.  Thus \eqref{952} follows from \eqref{952'} and \eqref{936}.

We may now invoke the abstract Trotter--Kato theory; see \cite{EN}.  The resolvents of the maximally accretive operators $X+2t\mc L_{q_n}$ converge for all $z$ with $\Im z>0$. Moreover, \eqref{952} ensures that the limit $A_\infty(z)$ is injective with dense range.  It then follows that $A_\infty(z)$ is the resolvent of a densely defined operator.  By \eqref{944}, the operator is maximally accretive. 

Finally, we verify \eqref{918}, which illustrates the strong connection between the maximally accretive operator just constructed and the naive notion of the sum $X+2t \mc L_q$ defined on $D(X)\cap D(\mc L_0)$.  By definition of the resolvent, \eqref{918} holds if we replace $q$ by the bounded approximations $q_n$. To send $n\to\infty$, we first note that arguing as in the proof of Proposition~\ref{P:Lax} we have
\begin{align*}
\|C_+(\ol q f)\|_{L^\infty} \lesssim \|[C_+(\ol q f)]\fwh \|_{L^1}\lesssim \|\hlm \widehat q\|_{L^2} \|\langle \eta\rangle \widehat f\|_{L^2}\lesssim \|q\|_{L^2}\|f\|_{H^1}
\end{align*}
and consequently,
\begin{align*}
\bnorm{ [qC_+\ol q - q_n C_+\smash[b]{\ol q}_n]f  }_{L^2}
	&\lesssim \| q \|_{L^2} \| C_+[\ol q - \smash[b]{\ol q}_n ]f \|_{L^\infty} + \| q-q_n \|_{L^2} \| C_+\smash[b]{\ol q}_n f \|_{L^\infty} \\
&\lesssim \bigl[ \| q_n \|_{L^2} + \| q \|_{L^2} \bigr] \| q-q_n \|_{L^2} \| f\|_{H^1}.
\end{align*}
This converges to zero as $n\to \infty$, completing the proof of \eqref{918}.
\end{proof}

\begin{proposition}\label{t:A}
Given a bounded and equicontinuous set $Q\subset L^2_+(\R)$, the operators
\begin{equation}\label{A 1}
A(t,z;q) = (X + 2t\mc L_q - z)^{-1}
\end{equation}
introduced in Proposition~\ref{t:A'} satisfy 
\begin{gather}\label{A 4}
|t|^{\frac12}\norm{ A }_{L^2_+\to L^\infty_+} + |t|^{\frac12}\norm{ A }_{L^1_+\to L^2_+} \lesssim \tfrac{1}{\sqrt{\Im z}} + \tfrac{1}{\Im z}\\
|t| \norm{ A }_{L^1_+\to L^\infty_+} \lesssim 1 + \tfrac{1}{\Im z} \label{A 444}
\end{gather}
uniformly for $q\in Q$, $|t|\leq T$, and $\Im z >0$.  Likewise, for any $f\in L^2_+(\R)$,
\begin{gather}\label{1010}
\bigl| I_+\bigl[ A(t,z;q) f\bigr] \bigr| \lesssim \Bigl[\tfrac{1}{\sqrt{\Im z}} + \tfrac{1}{\Im z}\Bigr] \| f\|_{L^2}.
\end{gather}
\end{proposition}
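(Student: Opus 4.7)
The plan is to reduce every estimate to the mapping bounds for $A_0$ collected in Proposition~\ref{t:A0}, using the Neumann series representation \eqref{936}. I would proceed in two stages: first establishing all bounds for $\Im z \geq b_0 := b(\tfrac12, Q)$ from Lemma~\ref{L:882}, where the series converges absolutely; then extending to $0 < \Im z < b_0$ by a one-step resolvent identity.

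In the first stage, the inequality $\|\pm 2tC_+\ol{q}A_0 q\|_\op \leq \tfrac12$ from Lemma~\ref{L:882} guarantees that $S := \sum_{\ell \geq 0}[\pm 2tC_+\ol{q}A_0 q]^\ell$ is a bounded operator on $L^2_+$ with $\|S\|_\op \leq 2$, and \eqref{936} becomes $A = A_0 \pm 2t\, A_0 q\, S\, C_+\ol{q} A_0$. I would compound the three $A_0$-estimates of Proposition~\ref{t:A0} with the auxiliary bounds $\|A_0 q\|_{L^2_+ \to L^\infty_+} \lesssim |t|^{-1}\|q\|_{L^2}$ (since $qf \in L^1$ when $f \in L^2$, after which \eqref{A0 4} applies) and $\|C_+ \ol{q} A_0\|_{L^2_+ \to L^2_+} \lesssim \|q\|_{L^2}(|t|\Im z)^{-1/2}$ (since $A_0 f \in L^\infty$ when $f \in L^2$, so H\"older places $\ol{q}A_0 f$ in $L^2$). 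Straightforward bookkeeping then yields $|t|^{1/2}\|A\|_{L^2_+ \to L^\infty_+} + |t|^{1/2}\|A\|_{L^1_+ \to L^2_+} \lesssim (\Im z)^{-1/2}$ and $|t|\,\|A\|_{L^1_+ \to L^\infty_+} \lesssim 1$, uniformly for $q \in Q$ and $\Im z \geq b_0$, with constants depending on $Q$ only through $\sup_{q \in Q}\|q\|_{L^2}$.

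In the second stage, for $\Im z < b_0$, set $z' = \Re z + i b_0$ (so that $|z-z'| \leq b_0$) and invoke the resolvent identity
\[
A(t,z) = A(t,z') + (z-z')\, A(t,z')\, A(t,z).
\]
Combining the stage-one bounds for $A(t,z')$ with the universal estimate $\|A(t,z)\|_\op \leq 1/\Im z$ from Proposition~\ref{t:A'} contributes a single additional factor of $1/\Im z$, which is precisely the second summand in \eqref{A 4} and \eqref{A 444}. Merging the two regimes yields the claimed bounds for all $\Im z > 0$.

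The functional estimate \eqref{1010} is obtained by the same two-stage scheme, with the dispersive bound \eqref{Schrod Poisson} for $I_+\circ A_0$ playing the role of the $A_0$-mapping norms; in the correction term I would apply $|I_+(A_0 g)| \lesssim |t|^{-1/2}\|g\|_{L^1}$ with $g = q\,S\,C_+\ol{q}A_0 f \in L^1$ and $\|g\|_{L^1} \lesssim \|q\|_{L^2}^2(|t|\Im z)^{-1/2}\|f\|_{L^2}$. The only step I anticipate as nonroutine is ensuring the uniformity in $q \in Q$ throughout stage one: this is exactly where equicontinuity enters, via Lemma~\ref{L:882}, to fix the cutoff $b_0(Q)$. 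Once past this threshold, every subsequent estimate depends on $q$ solely through $\|q\|_{L^2}$, and the remainder is mechanical.
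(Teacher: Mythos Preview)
Your proposal is correct and follows essentially the same route as the paper: choose $b_0=b(\tfrac12,Q)$ via Lemma~\ref{L:882}, use the Neumann series \eqref{936} together with the $A_0$-bounds from Proposition~\ref{t:A0} to obtain \eqref{A 4}--\eqref{A 444} for $\Im z\geq b_0$, and then extend to all $\Im z>0$ via the resolvent identity with $z'=\Re z+ib_0$ and the universal bound $\|A\|_\op\leq 1/\Im z$. The only organizational difference is in \eqref{1010}: the paper applies the one-step identity $A=A_0\pm 2tA_0qC_+\ol q\,A$ together with the already-proven \eqref{A 4} (valid for all $\Im z>0$), which avoids repeating the two-stage split; your series-then-extend version is equally valid.
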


\begin{proof}
In view of Lemma~\ref{L:882}, one may choose $b_0=b_0(Q)$ so that
\begin{equation}
\sup_{q\in Q} \bnorm{ 2tC_+\ol{q}A_0(t,z) q }_{\op} \leq \tfrac{1}{2} \qtq{whenever $|t|\leq T$ and $\Im z \geq b_0$.}
\label{A0 12}
\end{equation}
In this regime, $A(t, z;q)$ is given by the series expansion \eqref{936} and correspondingly,
\begin{align*}
\norm{ A( t,z;q ) }_{L^2_+\to L^\infty_+} &\lesssim \norm{ A_0(t, z) }_{L^2_+\to L^\infty_+}
	+ |t| \;\!\| A_0(t,z) q \|_{L^2_+\to L^\infty_+} \| C_+ \ol q A_0(t,z) \|_\op \\
&\lesssim \Bigl[ 1  + |t| \;\! \|q\|_{L^2}^2 \| A_0(t,z) \|_{L^1_+\to L^\infty_+} \Bigr]\norm{ A_0( t,z) }_{L^2_+\to L^\infty_+}  .
\end{align*}
Similarly, we obtain
\begin{align*}
\norm{ A( t,z;q ) }_{L^1_+\to L^2_+} &\lesssim 
\Bigl[ 1 + |t| \;\! \|q\|_{L^2}^2  \| A_0(t,z) \|_{L^1_+\to L^\infty_+} \Bigr] \norm{ A_0( t,z) }_{L^1_+\to L^2_+} 
\end{align*}
and
\begin{align*}
\norm{ A( t,z;q ) }_{L^1_+\to L^\infty_+} &\lesssim 
\Bigl[ 1 + |t| \;\! \|q\|_{L^2}^2  \| A_0(t,z) \|_{L^1_+\to L^\infty_+} \Bigr] \norm{ A_0( t,z) }_{L^1_+\to L^\infty_+} .
\end{align*}
The claims \eqref{A 4} and \eqref{A 444} then follow from \eqref{A0 5} and \eqref{A0 4}, albeit only for $\Im z\geq b_0$.

As the resolvent of a maximally accretive operator,
\begin{align}\label{1037}
A( z ;q) = A(z';q)  + (z-z') A(z';q) A(z;q)  \qtq{and} \| A(z;q) \|_\op \lesssim \tfrac{1}{\Im z} .
\end{align}
This extends \eqref{A 4} and \eqref{A 444} to general $\Im z > 0$ by choosing $z'=\Re z + ib_0$.  

Turning now to \eqref{1010}, we use the resolvent identity
\begin{align}\label{4.44}
A( z ;q) = A_0(z) \pm 2 t A_0(z) q C_+ \ol q A(z;q)  
\end{align}
together with \eqref{Schrod Poisson} and \eqref{A 4} to bound
\begin{align*}
\bigl| I_+\bigl[ A(t,z;q) f\bigr] \bigr| &\lesssim \tfrac{1}{\sqrt{\Im z}} \| f\|_{L^2} + |t|^{\frac12} \| qC_+ \ol qA(t,z;q) f\|_{L^1} \\
&\lesssim \tfrac{1}{\sqrt{\Im z}} \| f\|_{L^2} + \| q\|_{L^2}^2 \Bigl[\tfrac{1}{\sqrt{\Im z}} + \tfrac{1}{\Im z}\Bigr] \| f\|_{L^2} .
\end{align*}
This proves \eqref{1010}.
\end{proof}

We end this section by proving the explicit formula for $H^\infty_+(\R)$ solutions $q$ whose initial data satisfy $\langle x\rangle q^0\in L^2(\R)$.

\begin{proof}[Proof of Theorem~\ref{T:smooth EF}]
Let $q(t)$ be an $H^\infty_+$ solution to~\eqref{CM-DNLS} with initial data $q^0$ satisfying $\langle x\rangle q^0 \in L^2(\R)$.  A simple Gronwall argument guarantees that this decay property is preserved by the \eqref{CM-DNLS} evolution; see the discussion surrounding \eqref{1066 xq}.

Let $U(t;0)$ denote the unitary flow maps discussed in Proposition~\ref{P:Pflow}.  Given $\psi_0\in H^\infty_+(\R)$ satisfying $\langle x\rangle \psi_0 \in L^2(\R)$, consider
\begin{align}
\psi(t) := \bigl[ X U(t;0) - U(t;0)\bigl(X+2t\mc L_{q^0}\bigr) \bigr] \psi_0 \qtq{noting} \psi(0)=0.
\end{align}
The smoothness and decay hypotheses on $\psi_0$ and the fact that $U(t;0)$ preserves these properties ensure that $\psi(t)$ is well defined.  In particular, one may verify that $\psi(t)\in C_t L^2_+\cap C_t^1 H^{-2}_+$. In fact, using \eqref{comm 2} and \eqref{L conj}, the time derivative of $\psi(t)$ simplifies dramatically:
\begin{align*}
\tfrac{d}{dt} \psi(t) &= \mc P(q(t)) \psi(t) + \bigl[ X, \mc P(q(t))\bigr] U(t;0)\psi_0 - 2 U(t;0)\mc L_{q^0}\psi_0 =  \mc P(q(t)) \psi(t).
\end{align*}
By the uniqueness of solutions proved in Proposition~\ref{P:Pflow}, we deduce $\psi(t)\equiv 0$.  By the density of allowable $\psi_0$, we then deduce that $X U(t;0) = U(t;0)(X+2t\mc L_{q^0})$ and from there, that
\begin{align}\label{1222}
U(t;0) \bigl(X+2t\mc L_{q^0}-z\bigr)^{-1} = \bigl(X- z \bigr)^{-1} U(t;0) .
\end{align}

From \eqref{CIF} and the first relation in \eqref{L conj}, we know that
\begin{align}
 q(t,z) = \tfrac{1}{2\pi i} I_+\bigl[ \bigl(X- z \bigr)^{-1} q(t) \bigr]
 	= \tfrac{1}{2\pi i} I_+\bigl[ \bigl(X- z \bigr)^{-1} U(t;0) q^0 \bigr] .
\end{align}
Using \eqref{1222} and the first relation in \eqref{I+ def'} together with Lemma~\ref{t:P1=0}, we obtain
\begin{align*}
 q(t,z)  &= \tfrac{1}{2\pi i} I_+\bigl[ U(t;0) \bigl(X+2t\mc L_{q^0}-z\bigr)^{-1} q^0 \bigr] \\
 &= \tfrac{1}{2\pi i} \lim_{y\to\infty} \big\langle U(t;0)^*\chi_y, \bigl(X+2t\mc L_{q^0}-z\bigr)^{-1} q^0 \big\rangle \\
 &= \tfrac{1}{2\pi i}I_+\bigl[  \bigl(X+2t\mc L_{q^0}-z\bigr)^{-1} q^0\bigr],
\end{align*}
which proves \eqref{smooth EF}.
\end{proof}

\section{Well-posedness in $L^2_+(\R)$}\label{S:5}

In this section we will demonstrate global well-posedness of \eqref{CM-DNLS} in $L^2_+(\R)$.  For each initial data $q^0\in L^2_+(\R)$, we will construct the solution as a limit of solutions with smooth and well decaying initial data $q_n^0$.  Our concrete hypotheses will be these:
\begin{equation}\label{5hyp}
\begin{gathered}
q^0\in L^2_+(\R) \qtq{with} \| q^0\|_{L^2}^2 < M^*, \quad q^0_n\in H^\infty_+(\R), \quad \langle x\rangle q_n^0 \in L^2(\R),\\
q_n^0\to q^0 \text{ in $L^2$-sense}, \qtq{and} \sup_n \| q_n\|_{L^2} < M^* .
\end{gathered}
\end{equation}
Recall that $M^*$ denotes the equicontinuity threshold introduced in Definition~\ref{t:M* def}.  In Section~\ref{S:3}, we showed that $M^*=\infty$ in the defocusing case and that $M^*\geq 2\pi$ in the focusing case.


\begin{proposition}\label{t:q formula}
Suppose $q^0$ and $\{q^0_n\}_{n\in\N}$ satisfy \eqref{5hyp} and let $q_n(t)$ denote the solutions to \eqref{CM-DNLS} with initial data $q_n(0)=q_n^0$.  Then as $n\to\infty$, 
\begin{equation}
q_n(t,z)  \longrightarrow \efg (t,z; q^0) := \tfrac{1}{2\pi i} I_+\bigl[ (X + 2t\mc L_{q^0} - z)^{-1} q^0 \bigr]
\label{q formula}
\end{equation}
for each $t \in \R$ and each $\Im z > 0$.
\end{proposition}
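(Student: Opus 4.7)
The plan is to leverage Theorem~\ref{T:smooth EF}, which represents each smooth solution as $q_n(t,z) = \tfrac{1}{2\pi i} I_+(A_n q_n^0)$ with $A_n := A(t,z;q_n^0)$. Writing $A_\infty := A(t,z;q^0)$, the task reduces to showing $I_+(A_n q_n^0) \to I_+(A_\infty q^0)$ for each fixed $t\in\R$ and $\Im z > 0$. I would split the difference as
\begin{equation*}
I_+(A_n q_n^0) - I_+(A_\infty q^0) = I_+\bigl(A_n(q_n^0 - q^0)\bigr) + I_+\bigl((A_n - A_\infty)q^0\bigr)
\end{equation*}
and control each piece separately.

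For the first piece, since $q_n^0 \to q^0$ in $L^2_+$, the set $Q := \{q_n^0\}_n \cup \{q^0\}$ is bounded and $L^2$-equicontinuous. Hence Proposition~\ref{t:A} applies uniformly, and \eqref{1010} yields
$$|I_+(A_n(q_n^0 - q^0))| \lesssim_{t,\Im z,Q} \|q_n^0 - q^0\|_{L^2} \longrightarrow 0.$$

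For the second piece, I would invoke the resolvent identity
\begin{equation*}
A_n - A_\infty = \pm 2t\, A_n (V_n - V) A_\infty, \qquad V_n := q_n^0 C_+ \ol q_n^0,\quad V := q^0 C_+\ol q^0,
\end{equation*}
so that $I_+\bigl((A_n - A_\infty)q^0\bigr) = \pm 2t\, I_+(A_n g_n)$ with $g_n := (V_n - V) h$ and $h := A_\infty q^0$. By \eqref{A 4}, the fixed function $h$ lies in $L^\infty_+$ with norm controlled by $\|q^0\|_{L^2}$, $t$, and $\Im z$. Writing
\begin{equation*}
g_n = (q_n^0 - q^0) C_+(\ol q_n^0 h) + q^0 C_+\bigl((\ol q_n^0 - \ol q^0) h\bigr)
\end{equation*}
and applying Cauchy--Schwarz together with $L^2$-boundedness of $C_+$ produces
\begin{equation*}
\|g_n\|_{L^1} \lesssim \|q_n^0 - q^0\|_{L^2} \bigl(\|q_n^0\|_{L^2} + \|q^0\|_{L^2}\bigr) \|h\|_{L^\infty} \longrightarrow 0.
\end{equation*}
To absorb the outer $A_n$, I would derive an $L^1$-analogue of \eqref{1010}, namely
$$|I_+(A_n f)| \lesssim_{t,\Im z,Q} \|f\|_{L^1} \qtq{for} f\in L^1_+,$$
from the resolvent expansion $A_n = A_0 \pm 2t A_0 V_n A_n$ combined with the identity $I_+(A_0 g) = 2\pi i [e^{it\Delta} g](z)$ in \eqref{A0 6}, the dispersive--Poisson bound \eqref{Schrod Poisson 0}, and the mapping estimates \eqref{A 4}--\eqref{A 444}.

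The main obstacle is that $I_+$ fails to be bounded on $L^2_+$, so the norm-resolvent convergence $A_n \to A_\infty$ supplied by Proposition~\ref{t:A'} cannot be invoked directly. The resolvent trick above converts that $L^2$-operator difference into an $L^1$-convergent quantity, which the $L^1$-type bound on $I_+ \circ A_n$ is designed to digest. The degenerate case $t = 0$ sits outside this scheme and is handled separately: there $A(0,z;q) = (X-z)^{-1}$ independent of $q$, so \eqref{CIF} gives $\efg(0,z;q) = q(z)$, and the convergence $q_n(0,z) \to q^0(z)$ follows from continuity of the Poisson integral on $L^2_+$.
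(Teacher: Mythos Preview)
Your proof is correct and takes a genuinely different route from the paper's.  The paper expands both $A_n$ and $A_\infty$ as Neumann series about $A_0$ via \eqref{936}, which requires first taking $\Im z \geq b_0$ so that $\|2tC_+\ol f A_0 g\|_\op \leq \tfrac12$ uniformly over $Q$; it then compares the series term by term, obtaining \eqref{1254}, and afterwards extends to all $\Im z > 0$ by the resolvent identity \emph{in the spectral parameter} \eqref{1037} together with the norm convergence $A_n \to A_\infty$ from Proposition~\ref{t:A'}.  Your approach instead invokes the resolvent identity \emph{in the potential}, $A_n - A_\infty = \pm 2t\,A_n(V_n - V)A_\infty$, which is valid for every $\Im z > 0$ and sidesteps both the series expansion and the two-step extension.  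The cost is that you must supply the $L^1$-analogue of \eqref{1010}, but this follows in one line from \eqref{4.44}, \eqref{Schrod Poisson}, and \eqref{A 444} exactly as you indicate; indeed, this same device is used implicitly in the paper's proof of Theorem~\ref{t:magic formula}.  Your argument is thus shorter and more self-contained once Proposition~\ref{t:A} is in hand, while the paper's version keeps closer to first principles and uses Proposition~\ref{t:A'} (norm-resolvent convergence) rather than the full mapping estimates \eqref{A 4}--\eqref{A 444}.
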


\begin{proof}
In view of Theorem~\ref{T:smooth EF}, each $q_n(t,z)$ can be expressed through the explicit formula.  Thus, our task is so show that
\begin{equation}\label{5.3}
\lim_{n\to\infty} \bigl| I_+\bigl[ A(t,z; q_n^0)q_n^0 - A(t,z; q^0)q^0 \bigr] \bigr| = 0
\end{equation}
for fixed $t\in \R$ and $\Im z > 0$.  To this end, we set $Q = \{q^0\}\cup\{q_n^0 : n\in\N \}$ and adopt the abbreviations $A_n(t,z):=A(t,z; q_n^0)$ and $A(t,z)=A(t,z; q^0)$.

The set $Q$ is precompact in $L^2$ and so bounded and equicontinuous.  By Lemma~\ref{L:882}, we may choose $b_0$ sufficiently large so that for $\Im z\geq b_0$ we have
\begin{equation*}
\bnorm{ 2tC_+\ol{f}A_0(t,z)g }_{\op} \leq \tfrac{1}{2} \qtq{for all} f,g\in Q .
\end{equation*}

Combining this with \eqref{A0 5} and \eqref{A0 4}, we may then estimate
\begin{align*}
\bnorm{ \big( 2tq_n^0C_+\smash[b]{\ol q}_n^0 A_0(t,z) \big)^\ell & - \big( 2tq^0C_+\ol q^0A_0(t,z) \big)^\ell }_{L^2_+\to L^1_+} \\
&\qquad \lesssim 2\ell (\tfrac{1}{2})^{\ell-1} |t|^{\frac12} |\Im z|^{-\frac12} \norm{ q_n^0 - q^0 }_{L^2} \sup_{q\in Q} \norm{q}_{L^2},
\end{align*}
with the implicit constant independent of $\ell\geq 1$.   The role of this estimate is to allow us to expand both resolvents $A$ and $A_n$ in the manner of \eqref{936}.  Proceeding in this way and using \eqref{Schrod Poisson}, we find that
\begin{align}\label{1254}
\big| I_+\big[ & A_n(t,z)f - A(t,z) f\big] \big|  \notag\\
&\lesssim \sum_{\ell\geq 1} \Bigl| I_+\Bigl[ A_0(t,z)\Bigl\{ \big( 2tq_n^0C_+\smash[b]{\ol q}_n^0A_0(t,z) \big)^\ell - \big( 2tq^0C_+\ol q^0A_0(t,z) \big)^\ell \Big\} f \Bigr]\Bigr| \\
&\lesssim_Q |\Im z|^{-\frac12} \norm{ q_n^0 - q^0 }_{L^2}  \norm{f}_{L^2} \notag
\end{align}
for any $\Im z\geq b_0$ and any $f\in L^2_+(\R)$.

Combining \eqref{1254} and \eqref{1010}, we see that
\begin{align}\label{1254'}
\big| I_+\big[ A_n(z)q_n^0 - A(z) q^0\big] \big| \leq \big| I_+\circ \big[ A_n(z)- A(z)\big] q_n^0 \big| 
	+ \big| I_+\circ A(z) [q^0_n - q^0\big] \big| 
\end{align}
converges to zero as $n\to\infty$.  This proves \eqref{5.3} provided $\Im z\geq b_0$.

It remains to prove convergence for general $\Im z >0$.  The restriction $\Im z\geq b_0$ was needed for our treatment of the first term in RHS\eqref{1254'}.  To overcome this, we employ \eqref{1037} to expand
\begin{align*}
I_+\circ [A_n(t,z) - A(t,z)] &= I_+\circ \bigl[ A_n(t,z')-A(t,z') \bigr] \\
&\quad+ (z-z') I_+\circ \bigl[ A_n(t,z') - A(t,z') \bigr] A_n(t,z) \\
& \quad + (z-z') I_+\circ A(t,z') \bigl[ A_n(t,z) - A(t,z) \bigr] 
\end{align*}
in which we choose $z'=\Re z + i b_0$.  The contribution of the first two terms is easily handled using \eqref{1254} and \eqref{920}.
For the third term, we use \eqref{1010} in concert with Proposition~\ref{t:A'}, which showed that $A_n(z)\to A(z)$ in operator norm.
\end{proof}

By the conservation of mass and the Banach--Alaoglu Theorem, the solutions $q_n(t)$ converge subsequentially weakly in $L^2_+$ for each $t\in \R$. The previous proposition allows us to uniquely identify this limit.

\begin{corollary}\label{C:weak}
Suppose $q^0$ and $\{q^0_n\}_{n\in\N}$ satisfy \eqref{5hyp} and let $q_n(t)$ denote the solutions to \eqref{CM-DNLS} with initial data $q_n(0)=q_n^0$.  Then
\begin{equation}\label{1272}
q_n(t) \rightharpoonup \efg(t;q^0) \quad \text{weakly in $L^2_+(\R)$}.
\end{equation}
In particular, $\efg(t;q^0)$ defined by~\eqref{q formula} belongs to $L^2_+(\R)$.
\end{corollary}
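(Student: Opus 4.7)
The plan is to combine the pointwise convergence from Proposition~\ref{t:q formula} with a weak-compactness argument based on Banach--Alaoglu, and then identify the weak limit by using the injectivity of the harmonic extension on $L^2_+(\R)$.

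First, the solutions $q_n(t)$ are all global in time: in the defocusing case this is Theorem~\ref{T:GL}, while in the focusing case globality follows from combining Theorem~\ref{T:foc equi} (which yields equicontinuity of $\{q_n(s):s\in\R\}$ thanks to the mass hypothesis in \eqref{5hyp}) with Lemma~\ref{L:horizon}. Conservation of $M$ then gives a bound on $\|q_n(t)\|_{L^2}$ that is uniform in both $n$ and $t$.

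Fix $t\in\R$ and take any subsequence of $\{q_n(t)\}_{n\in\N}$. Since $L^2_+(\R)$ is a norm-closed, hence weakly closed, subspace of $L^2(\R)$, Banach--Alaoglu produces a further subsequence $q_{n_k}(t)\rightharpoonup q_\infty$ with $q_\infty\in L^2_+(\R)$. For each $z$ with $\Im z>0$, the Poisson kernel $P_z(x):=\tfrac{\Im z}{\pi|x-z|^2}$ is a real-valued element of $L^2(\R)$, so testing weak convergence against $P_z$ yields $q_{n_k}(t,z)\to q_\infty(z)$ in the sense of \eqref{PIF11}. On the other hand, Proposition~\ref{t:q formula} forces $q_{n_k}(t,z)\to\efg(t,z;q^0)$. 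Hence $q_\infty(z)=\efg(t,z;q^0)$ throughout the upper half-plane.

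The harmonic extension uniquely determines an element of $L^2_+(\R)$, since $q_\infty(\,\cdot\,+iy)=P_y*q_\infty\to q_\infty$ in $L^2(\R)$ as $y\to 0^+$. Thus every weakly convergent subsequence of $\{q_n(t)\}$ shares the same weak limit $q_\infty$, and a standard subsequence argument upgrades subsequential weak convergence to weak convergence of the entire sequence. Since $q_\infty\in L^2_+(\R)$ has harmonic extension $\efg(t,\,\cdot\,;q^0)$, this both places $\efg(t;q^0)$ in $L^2_+(\R)$ and establishes \eqref{1272}. The argument is essentially soft: once Proposition~\ref{t:q formula} is in hand, the only nontrivial input is the elementary injectivity of the harmonic extension, so no genuine obstacle is anticipated.
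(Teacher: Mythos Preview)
Your argument is correct and follows essentially the same route as the paper: Banach--Alaoglu for weak subsequential limits in $L^2_+(\R)$, identification of the subsequential limit by testing against point evaluation in the upper half-plane (the paper cites \eqref{266}; you equivalently use the Poisson kernel), then Proposition~\ref{t:q formula} together with the injectivity of the harmonic extension to pin the limit down uniquely and upgrade to convergence of the full sequence. One minor imprecision: your justification of globality in the focusing case invokes Theorem~\ref{T:foc equi}, which only covers mass below $2\pi$, whereas \eqref{5hyp} permits mass up to $M^*$; the correct route (already established by the paper prior to Section~\ref{S:5}, and hence not repeated in its proof) is Definition~\ref{t:M* def} itself, which guarantees equicontinuity on bounded time intervals for mass below $M^*$, combined with Lemma~\ref{L:horizon}.
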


\begin{proof}
Fix $t\neq 0$, and consider an arbitrary subsequence of $\{ q_n(t) \}_{n\in\N}$.  By conservation of mass, we know that
\begin{equation*}
\norm{q_n(t)}_{L^2} = \snorm{q_n^0}_{L^2} \lesssim \snorm{q^0}_{L^2} .
\end{equation*}
Therefore, by Banach--Alaoglu we may pass to a further subsequence along which
\begin{equation}\label{weak conv}
q_n(t) \rightharpoonup \wt{q}(t) \quad\text{weakly in }L^2_+ (\R)
\end{equation}
for some function $\wt{q}(t)$ in the Hardy space $L^2_+(\R)$. In particular $\wt q(t,z)$ is holomorphic in the upper half-plane.

As noted in \eqref{266}, evaluation at any point $z$ with $\Im z>0$ is a bounded linear functional on $L^2_+$. Correspondingly, $q_n(t,z) \to \wt{q}(t,z)$ pointwise.  In this way, Proposition~\ref{t:q formula} implies  $\wt q (t,z)   = \efg (t,z; q^0)$ for all $\Im z>0$.  This shows that $\efg (t,z; q^0)$ is holomorphic and lies in $L^2_+(\R)$.

Moreover, $\wt q (t,z)   = \efg (t,z; q^0)$ shows that the subsequential limit $\widetilde q\in L^2_+(\R)$ does not depend on the subsequence chosen.
Thus, we conclude that $q_n(t) \rightharpoonup \efg (t; q^0)$ along the whole sequence.
\end{proof}

With little additional effort, Corollary~\ref{C:weak} can be used to guarantee the existence of weak/distributional solutions to \eqref{CM-DNLS}.  To obtain well-posedness, however, we need to prove convergence in a stronger topology, one that guarantees that the limit depends continuously on both time and the initial data.  To this end, we seek to upgrade the weak convergence of $q_n(t)$ to $\efg (t,z;q^0)$ to strong convergence in $C_tL^2_x([-T,T]\times\R)$ for all $T>0$.  

\begin{theorem}\label{t:conv}
Suppose $q^0$ and $\{q^0_n\}_{n\in\N}$ satisfy \eqref{5hyp} and let $q_n(t)$ denote the solutions to \eqref{CM-DNLS} with initial data $q_n(0)=q_n^0$. Then for all $T>0$, the solutions $q_n(t)$ converge in $C_tL^2_x([-T,T]\times\R)$ to the function $\efg(t;q^0)$.
\end{theorem}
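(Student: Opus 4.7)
My plan is to deduce strong $C_tL^2_x$ convergence from the pointwise-in-$t$ weak convergence already established in Corollary~\ref{C:weak} by demonstrating that the family $\{q_n : n\in\N\}$ is precompact in $C([-T,T];L^2_+(\R))$.  An Arzel\`a--Ascoli style argument in $L^2_+(\R)$ requires four uniform-in-$n$ ingredients: boundedness, $L^2$-equicontinuity in $x$, tightness in $x$, and equicontinuity in $t$.  Once these are in hand, any $C_tL^2_x$ subsequential limit must coincide (by Corollary~\ref{C:weak}) with $\efg(\cdot;q^0)$, forcing convergence of the full sequence.

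Boundedness is immediate: mass conservation and the hypotheses \eqref{5hyp} give $\sup_{n,|t|\leq T}\|q_n(t)\|_{L^2}^2 = \sup_n\|q_n^0\|_{L^2}^2 < M^*$.  Spatial equicontinuity is, by design, a direct consequence of Definition~\ref{t:M* def} combined with Section~\ref{S:3}: the convergent sequence $\{q_n^0\}$ is $L^2$-equicontinuous and strictly below the threshold $M^*$, so the partial orbits $\{q_n(t):n\in\N,\,|t|\leq T\}$ are $L^2$-equicontinuous.  This is the only point at which the mass restriction is needed.

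The next step is tightness in $x$ (Proposition~\ref{t:tight 2}).  Weak $L^2$-convergence plus spatial equicontinuity do not by themselves preclude escape of mass to spatial infinity, so I would exploit the explicit formula directly: since each $q_n(t,\cdot)$ lies in the Hardy space, it is determined by the harmonic extension $q_n(t,z) = \tfrac{1}{2\pi i}I_+[(X+2t\mc L_{q_n^0}-z)^{-1}q_n^0]$, which is controlled uniformly through the resolvent bounds of Propositions~\ref{t:A0} and \ref{t:A}.  The plan is to convert decay of this extension as $|\Re z|\to\infty$, obtained from the $L^1_+\to L^\infty_+$ and $L^2_+\to L^\infty_+$ estimates \eqref{A 4} and \eqref{A 444}, into smallness of $\int_{|x|\geq R}|q_n(t,x)|^2\,dx$.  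Spatial equicontinuity enters by allowing us to replace $q_n^0$ with its low-frequency truncation at controllable cost; the truncated part then has an improved mapping behaviour through the resolvent, while the high-frequency remainder is absorbed by the already-established equicontinuity.  For equicontinuity in $t$ (Lemma~\ref{L:equi in t}), I would write the Duhamel formula against the free Schr\"odinger group: the linear increment is uniformly small over short time intervals by virtue of spatial equicontinuity (Plancherel on the Schr\"odinger symbol), while the nonlinear increment $q C_+(|q|^2)'$ would be estimated in a negative-regularity Hardy--Sobolev space using the product law \eqref{277} and the boundedness of $C_+$, the resulting smallness being inherited from the shrinking time window.

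The main obstacle will be the tightness step.  Nothing in the abstract conservation-law machinery used in Section~\ref{S:3} prevents wholesale translation of mass to $|x|=\infty$ over the time interval $[-T,T]$, and in the focusing case the attractive nonlinearity makes heuristic dispersive arguments unreliable.  The explicit formula is the only tool that directly tracks the spatial profile, and the delicate point is to balance the dependence of the resolvent bounds on $\Im z$, on $|t|$, and on $|\Re z|$ so as to extract uniform spatial decay of the harmonic extension and translate it back into an $L^2$-tail estimate, uniformly in $n$ and in $|t|\leq T$.
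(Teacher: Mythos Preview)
Your overall architecture is exactly that of the paper: reduce to Arzel\`a--Ascoli via Corollary~\ref{C:weak}, then verify boundedness, $L^2$-equicontinuity in $x$ (from Section~\ref{S:3} and the definition of $M^*$), tightness in $x$, and equicontinuity in $t$.  Your treatment of boundedness and spatial equicontinuity is correct, and your plan for equicontinuity in $t$ is close to the paper's (the paper differentiates $P_{\leq N}q_n$ directly rather than writing Duhamel, but the key estimates are the same: spatial equicontinuity controls $P_{>N}q_n$, while the nonlinearity is placed in $H^{-5}_+$ via \eqref{277}).

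The gap is in your tightness mechanism.  The bounds \eqref{A 4} and \eqref{A 444} are uniform in $\Re z$; they give no decay as $|\Re z|\to\infty$, so there is nothing to ``balance'' against $\Im z$ and $|t|$ to extract spatial decay of the harmonic extension.  Pointwise decay of $q_n(t,x+ib)$ as $|x|\to\infty$ is simply not available from these operator-norm estimates, and even if it were, converting pointwise decay of the extension into an $L^2$-tail bound on the real axis is not automatic.

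The paper circumvents this by a two-stage argument.  First (Proposition~\ref{t:tight 1}), it proves $L^2$-tightness of $x\mapsto q_n(t,x+ib)$ for fixed $b>0$: the resolvent identity splits the explicit formula into a term $I_+[A_0 q_n^0]=2\pi i\,[e^{it\Delta}q_n^0](z)$, whose tightness comes from precompactness of $\{q_n^0\}$ in $L^2$ transported by the Schr\"odinger group, and a term supported on $q_n^0 C_+\ol{q}_n^0 A q_n^0$, whose $L^1$-tightness comes directly from the $L^2$-tightness of the prefactor $q_n^0$ together with \eqref{A 4}.  Second (Proposition~\ref{t:tight 2}), it descends from $b>0$ to $b=0$ by writing $P_{\leq N}q_n(t) = K * [e^{-b|\partial|}q_n(t)]$ for a Schwartz kernel $K$ depending on $b,N$, so that tightness at height $b$ transfers to the low-frequency part on the real line; the high-frequency tail is again absorbed by spatial equicontinuity.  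The essential idea you are missing is that tightness comes not from decay of the resolvent in $\Re z$, but from the \emph{tightness of the initial data} fed through the identity \eqref{A0 6} and the $L^1$ structure of the nonlinear remainder.
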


By Corollary~\ref{C:weak} and the Arzel\`a--Ascoli Theorem, to prove Theorem~\ref{t:conv} it suffices to show that the subset $\{ q_n(t): n\geq 1 \text{ and } |t|\leq T\}$ of $C_tL^2_x([-T,T]\times\R)$ is equicontinuous in both the time and space variables and tight in the space variable.  Recall that equicontinuity in the spatial variable was demonstrated in Section~\ref{S:3}.

Our next result proves equicontinuity in the time variable.
 
\begin{lemma}[Equicontinuity in time]\label{L:equi in t}
Suppose $q^0$ and $\{q^0_n\}_{n\in\N}$ satisfy \eqref{5hyp} and let $q_n(t)$ denote the solutions to \eqref{CM-DNLS} with initial data $q_n(0)=q_n^0$.   Then for each $T>0$ we have
\begin{equation}\label{equicty in t}
\sup_{n\geq 1} \norm{ q_n(t+h) - q_n(t) }_{C_tL^2_x([-T,T]\times\R)} \to 0 \quad\text{as }h\to 0.
\end{equation}
\end{lemma}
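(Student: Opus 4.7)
The plan is to decompose the time increment $q_n(t+h)-q_n(t)$ into low- and high-frequency parts and control each separately: the high-frequency part via the spatial equicontinuity from Section~\ref{S:3}, and the low-frequency part via the equation itself.

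Under hypothesis \eqref{5hyp}, the family $\{q_n(s):n\geq 1,\,|s|\leq T\}$ is $L^2$-bounded (by mass conservation) and $L^2$-equicontinuous (by Definition~\ref{t:M* def}, applied to $Q=\{q_n^0\}$, which is convergent, hence bounded and equicontinuous, and satisfies $\sup_n \|q_n^0\|_{L^2}^2<M^*$). Given $\eps>0$, I would first choose $N=N(\eps,T)$ so that $\sup_{n,\,|s|\leq T}\|P_{>N}q_n(s)\|_{L^2}<\eps/4$; the high-frequency part of the increment is then bounded by $\eps/2$ uniformly in $n$ and in $t,t+h\in[-T,T]$. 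For the low-frequency part, since each $q_n$ is a smooth global solution of \eqref{CM-DNLS}, the Fundamental Theorem of Calculus gives
\[
P_{\leq N}\bigl[q_n(t+h)-q_n(t)\bigr] = \int_t^{t+h}P_{\leq N}\bigl[iq_n''(s)\pm 2q_n(s)C_+(|q_n(s)|^2)'\bigr]\,ds.
\]

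The linear piece is bounded by $\|P_{\leq N}q_n''\|_{L^2}\lesssim N^2\|q_n\|_{L^2}$. For the nonlinear piece, I would work on the Fourier side: since $q_n\in L^2_+$ and $C_+$ preserves Fourier support in $[0,\infty)$, for $\xi\in[0,2N]$ we have
\[
\bigl|\wh{q_n C_+(|q_n|^2)'}(\xi)\bigr| \lesssim \int_0^{\xi}|\wh{q_n}(\xi-\eta)|\,\eta\,|\wh{|q_n|^2}(\eta)|\,d\eta \lesssim N\,\|\wh{|q_n|^2}\|_{L^\infty}\!\int_0^{2N}|\wh{q_n}(\zeta)|\,d\zeta.
\]
Combined with the elementary inequalities $\|\wh{|q_n|^2}\|_{L^\infty}\lesssim \|q_n\|_{L^2}^2$ and $\int_0^{2N}|\wh{q_n}|\,d\zeta\leq N^{1/2}\|q_n\|_{L^2}$, integration in $\xi$ over $[0,2N]$ yields $\|P_{\leq N}[q_nC_+(|q_n|^2)']\|_{L^2}\lesssim N^2\|q_n\|_{L^2}^3$. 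Using $\sup_n\|q_n\|_{L^2}^2<M^*$ then produces $\|P_{\leq N}[q_n(t+h)-q_n(t)]\|_{L^2}\leq C(M^*)\,|h|\,N^2$, which is at most $\eps/2$ whenever $|h|\leq \eps/(2C(M^*)N^2)$.

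The main technical obstacle is that the $L^2$ norm of $q_nC_+(|q_n|^2)'$ is not uniformly controlled—indeed, applying \eqref{op est 4} would require $H^{1/2}$-smoothness of $q_n$, which is unavailable uniformly in $n$. We circumvent this by estimating only the low-frequency portion of the product: restricting the output frequency to $[0,2N]$ cuts the inner Fourier convolution off to a bounded region in $\eta$, and the elementary bound $\|\wh{|q_n|^2}\|_{L^\infty}\leq (2\pi)^{-1/2}\|q_n\|_{L^2}^2$ together with Cauchy--Schwarz supplies the missing Sobolev information.
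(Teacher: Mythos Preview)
Your proof is correct and follows the same overall strategy as the paper: split $q_n(t+h)-q_n(t)$ into high and low frequencies, control the high-frequency part by the spatial $L^2$-equicontinuity established in Section~\ref{S:3}, and bound the low-frequency part via $\|\tfrac{d}{dt}P_{\leq N}q_n\|_{L^2}$ using the equation.

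The only difference lies in the treatment of the nonlinear low-frequency term. The paper invokes the Hardy-space product estimate~\eqref{277} together with the embedding $L^1\hookrightarrow H^{-1}$ to obtain
\[
\bigl\|P_{\leq N}\bigl[q_nC_+(|q_n|^2)'\bigr]\bigr\|_{L^2}\lesssim N^5\|q_n^0\|_{L^2}^3,
\]
whereas you carry out a direct Fourier-side computation exploiting the support constraint $\eta\in[0,\xi]\subset[0,2N]$, arriving at the sharper bound $N^2\|q_n\|_{L^2}^3$. Your argument is more elementary in that it avoids the abstract product lemma, and the improved power of $N$ is a pleasant bonus, though of course irrelevant for the conclusion here. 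One cosmetic remark: since $M^*=\infty$ in the defocusing case, the constant you write as $C(M^*)$ is really $C\bigl(\sup_n M(q_n^0)\bigr)$.
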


\begin{proof}
Throughout the proof, all spacetime norms will be over the slab $[-T,T]\times\R$.

For $N\geq1$ to be chosen later and $t,t+h \in [-T,T]$, we may estimate
\begin{align}
\norm{ q_n(t+h) - q_n(t) }_{C_tL^2_x}
&\leq \norm{ P_{\leq N}[q_n(t+h) -q_n(t)] }_{C_tL^2_x} + 2 \norm{P_{>N}q_n}_{C_tL^2_x}\notag\\
&\leq |h| \bigl\|\tfrac{d}{dt} P_{\leq N} q_n \bigr\|_{L^\infty_t L^2_x}+ 2\norm{P_{>N}q_n}_{C_tL^2_x}.\label{1:1}
\end{align}
As the set $\{ q_n(t): n\geq 1 \text{ and } |t|\leq T\}$ is equicontinuous in the spatial variable (see Section~\ref{S:3}), for any $\eps>0$ we may choose $N=N(\eps)\geq 1$ large enough so that 
\begin{align}\label{1:2}
\sup_{n\geq 1} \norm{P_{>N}q_n}_{C_tL^2_x}<\tfrac\eps{10}.
\end{align}

Using the equation \eqref{CM-DNLS}, the Bernstein inequality, and the conservation of mass, we may bound
\begin{align*}
\bigl\|\tfrac{d}{dt} P_{\leq N} q_n \bigr\|_{L^\infty_t L^2_x}
\lesssim N^2 \|q_n^0\|_{L^2} + \bigl\|P_{\leq N} [q_nC_+\big( |q_n|^2 \big)' ]\bigr\|_{L^\infty_t L^2_x}.
\end{align*}
Applying \eqref{277} and recalling the embedding $L^1\hookrightarrow H^{-1}$, we may bound the contribution of the nonlinearity as follows:
\begin{align*}
\bigl\|P_{\leq N} [q_nC_+\big( |q_n|^2 \big)'] \bigr\|_{L^\infty_t L^2_x}
&\lesssim N^5  \|q_nC_+\big( |q_n|^2 \big)' \|_{L^\infty_t H^{-5}_x}\\
&\lesssim N^5\|q_n\|_{L^\infty_tH^{-2}_x} \||q_n|^2\|_{L^\infty_t H^{-1}_x }\\
&\lesssim N^5 \|q_n^0\|_{L^2}^3 .
\end{align*}
Thus, 
\begin{equation*}
\bigl\|\tfrac{d}{dt} P_{\leq N} q_n \bigr\|_{L^\infty_t L^2_x} \lesssim N^2 \|q_n^0\|_{L^2} + N^5 \|q_n^0\|_{L^2}^3,
\end{equation*}
uniformly for $n\geq 1$.  Choosing $|h|$ sufficiently small, we may ensure that 
$$
\sup_{n\geq 1} |h| \bigl\|\tfrac{d}{dt} P_{\leq N} q_n \bigr\|_{L^\infty_t L^2_x}<\tfrac\eps{10}.
$$
Combining this with \eqref{1:1} and \eqref{1:2} and recalling that $\eps>0$ was arbitrary, completes the proof of the lemma.
\end{proof}

It remains to prove tightness in the spatial variable.  This will be accomplished in two distinct steps.  First, we will show that for any $b>0$, the functions $x\mapsto q_n(t,x+ib)$ are tight in $L^2(\R)$; this is proved in Proposition~\ref{t:tight 1} below.  In the second step, we will upgrade this statement to tightness on the real line ($b=0$); this is realized in Proposition~\ref{t:tight 2}.

\begin{proposition}\label{t:tight 1}
Suppose $q^0$ and $\{q^0_n\}_{n\in\N}$ satisfy \eqref{5hyp} and let $q_n(t)$ denote the solutions to \eqref{CM-DNLS} with initial data $q_n(0)=q_n^0$.  For any $b>0$ and $T>0$, the set
\begin{equation*}
\big\{ q_n(t,\cdot+ib) : n\geq 1,\ |t|\leq T \big\}
\end{equation*}
is tight in $L^2(\R)$.
\end{proposition}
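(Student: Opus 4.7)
The plan is to exploit the explicit formula \eqref{q formula} together with the resolvent identity \eqref{4.44} for $A(t,z;q_n^0)$.  Using \eqref{A0 6} for the free part and extending it to $L^1_+$-arguments via Proposition~\ref{t:A}, I obtain the decomposition
\[
q_n(t, x+ib) = \bigl(e^{it\Delta} q_n^0\bigr)(x+ib) \pm 2t\, \bigl(e^{it\Delta}\bigl[q_n^0\, g_n(t, \cdot; x+ib)\bigr]\bigr)(x+ib),
\]
where $g_n(t, z) := C_+\bigl[\overline{q_n^0}\, A(t, z; q_n^0)\, q_n^0\bigr] \in L^2_+(\R)$. The point is that the Fourier support of $q_n^0\,g_n(t,z)$ lies in $[0,\infty)$, so the identity \eqref{A0 6} applies even though this product lies only in $L^1$.

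For the linear term, tightness follows from a compactness argument.  Unitarity of $e^{it\Delta}$, strong continuity of $t \mapsto e^{it\Delta} q^0$, and the $L^2$-convergence $q_n^0 \to q^0$ imply that $\{e^{it\Delta} q_n^0 : n\geq 1,\ |t|\leq T\}$ is relatively compact (hence tight) in $L^2(\R)$.  Convolving with the Poisson kernel $P_b$ (a bounded operator on $L^2$) preserves this tightness, giving the desired control on $\{(e^{it\Delta} q_n^0)(\cdot + ib)\}$.

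For the correction term, Proposition~\ref{t:A} combined with \eqref{hs improved} and the $L^2$-boundedness of $C_+$ produces a uniform bound $\|g_n(t, z)\|_{L^2} \lesssim_{b,T} \|q_n^0\|_{L^2}^2$ for $\Im z \geq b$, and hence a uniform $L^1$-bound on $q_n^0\, g_n(t,z)$.  The difficulty is that the integrand in $(e^{it\Delta}[q_n^0 g_n(t, \cdot; x+ib)])(x+ib)$ depends on $x$ through $z = x+ib$, so the correction is not the Schrödinger evolution of a fixed $L^1$-function.  To extract $L^2$-tightness in $x$, I would use the resolvent identity $A(t, z) - A(t, z_0) = (z-z_0)\, A(t,z) A(t, z_0)$ to expand $g_n(t, z)$ around a fixed base point $z_0 = x_0 + ib$, writing $g_n(t, z) = g_n(t, z_0) + (z - z_0)\, h_n(t, z, z_0)$ with $h_n$ uniformly controlled via \eqref{A 4}.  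The piece involving $g_n(t, z_0)$ is then the Schrödinger evolution of a fixed $L^1_+$-function, whose harmonic extension is tight as in the linear term; the $(z - z_0)$ correction is absorbed by a Minkowski-type estimate, and summation over a dyadic partition in $x_0$ yields uniform tightness.

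The hard part will be precisely this handling of the $z$-dependence of $g_n(t,z)$ while maintaining uniformity in $n$ and $|t| \leq T$: the correction term behaves like a variable-coefficient convolution rather than a standard dispersive evolution, and a naive $L^\infty$-pointwise bound from the $L^1 \to L^\infty$ dispersive estimate gives neither decay in $x$ nor the $L^2$-integrability required for tightness.  Resolving this is what forces the combined use of the resolvent identity, the dispersive bounds of Proposition~\ref{t:A}, and the precompactness of $\{q_n^0\}$ in $L^2$.
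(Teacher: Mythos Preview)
Your decomposition and treatment of the linear term are correct and match the paper exactly. You have also correctly identified the genuine difficulty in the correction term: the function $g_n(t,z)=C_+\bigl[\ol{q}^0_nA(t,z;q^0_n)q^0_n\bigr]$ depends on the evaluation point $z=x+ib$, so the expression $\bigl(e^{it\Delta}[q^0_n\,g_n(t,\cdot;x+ib)]\bigr)(x+ib)$ is a diagonal evaluation rather than a single Schr\"odinger orbit.

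Where your proposal falls short is in the resolution of this difficulty. Writing $g_n(t,z)=g_n(t,z_0)+(z-z_0)h_n(t,z,z_0)$ does \emph{not} decouple the $z$-dependence: $h_n$ still depends on $z$, so you are left with the same diagonal problem multiplied by a growing factor $(x-x_0)$. The phrases ``Minkowski-type estimate'' and ``summation over a dyadic partition in $x_0$'' are not a proof; iterating the resolvent identity produces a power series in $(z-z_0)$ whose convergence you would then have to control, and you give no mechanism for summing the dyadic pieces.

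The paper avoids this decoupling entirely by observing that one does not need to freeze $z$; one needs only \emph{uniformity} in $z$. Concretely, the paper shows that the family of $L^1_+$ functions $\phi_z:=2tq^0_nC_+\ol{q}^0_nA(t,z;q^0_n)q^0_n$ is bounded and \emph{tight} in $L^1$ uniformly over all $z$ with $\Im z=b$, because the outermost factor $q^0_n$ (which lies in a precompact subset of $L^2$) carries the localization while the remaining factor is bounded in $L^2$ uniformly in $z$ by \eqref{A 4}. The Poisson operator $e^{-b|\partial|}$ then maps this family into a set that is bounded in $H^1$ and tight in $L^2$ (both shown via explicit Poisson-kernel tail bounds), hence precompact in $L^2$; the Schr\"odinger flow preserves this precompactness. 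This is the missing idea: rather than expanding around a base point, exploit the fact that the $L^1$-tightness is inherited from the fixed factor $q^0_n$ and is therefore automatically uniform in the troublesome parameter $z$.
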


\begin{proof}
By Theorem~\ref{T:smooth EF}, the functions $q_n(t,z)$ admit the representation \eqref{smooth EF} for any $z$ with $\Im z>0$.  Together with the resolvent identity, this leads to
\begin{align*}
q_n(t,z) = \tfrac{1}{2\pi i} I_+\big[A_0(t,z)q_n^0 \big]\pm \tfrac{1}{2\pi i} I_+\big[ A_0(t,z) 2tq_n^0C_+\ol q_n^0A(t,z;q_n^0) q_n^0 \big].
\end{align*}

We will analyze the two summands above using the identity \eqref{A0 6}.  To begin, we write
\begin{equation}
\tfrac{1}{2\pi i} I_+\big[A_0(t,z)q_n^0 \big] = \big[ e^{it\Delta}q_n^0 \big](z).
\label{tight 1}
\end{equation}

Let $Q:=\{q^0_n: n\geq 1\}$.  As $Q$ is precompact in $L^2(\R)$ and the free Schr\"odinger propagator is continuous on $L^2(\R)$, the set of functions
\begin{equation*}
\ms{F}:=\big\{ \big[ e^{it\Delta}q\big](x) : \ q\in Q,\ |t|\leq T \big\}
\end{equation*}
is also precompact in $L^2(\R)$.  

For $f\in\ms{F}$ and $b>0$, we may use the Poisson integral formula
\begin{equation}
f(x+ib) = [e^{-b|\partial|}f](x) = \tfrac{1}{\pi} \int \tfrac{b}{(y-x)^2+b^2} f(y)\,dy
\label{PIF}
\end{equation}
and Cauchy--Schwarz to estimate
\begin{align*}
\int_{|x|\geq L} |f(x+ib)|^2\,dx
&\lesssim \int_{|x|\geq L} \int_{\R} \tfrac{b}{(y-x)^2+b^2} |f(y)|^2\,dy\,dx \\
&\lesssim \int_{|y|\geq L_0} |f(y)|^2\,dy + \tfrac{b}{L-L_0} \norm{f}_{L^2}^2 ,
\end{align*}
for any $L\geq 2L_0\geq 1$.  As $\ms{F}$ is precompact in $L^2(\R)$, we may pick $L_0$ large and then $L\gg L_0$ to make the right-hand side above arbitrarily small.  In view of \eqref{tight 1}, this demonstrates that the set
$$\big\{ I_+\big[A_0(t,x+ib)q \big]:\ q\in Q,\ |t|\leq T\big\}$$
is tight in $L^2(\R)$.

It remains to show that the functions
\begin{align*}
\tfrac{1}{2\pi i} I_+\big[ A_0(t,x+ib) 2tq C_+\ol q &A(t,x+ib;q) q \big]\! \!=\!\! \Bigl[ e^{it\Delta}e^{-b|\partial|}\bigl( 2tqC_+\ol{q}A(t,x+ib;q)q  \bigr)\Bigr](x)
\end{align*}
with $q\in Q$ and $|t|\leq T$ form a set that is tight in $L^2(\R)$.

We first observe that the set
\begin{equation*}
\ms{G} = \big\{ 2tqC_+\ol{q}A(t,x+ib;q)q : \ q\in Q,\ |t|\leq T \big\}
\end{equation*}
is bounded and tight in $L^1(\R)$.  Indeed, using \eqref{A 4} we may bound
\begin{align*}
\bigl\|  2tqC_+\ol{q}A(t,x+ib;q_n)&q\bigr\|_{L^1(|x|\geq L)}\\
&\lesssim |t| \|q\|_{L^2(|x|\geq L)} \|q\|_{L^2}\|A(t,x+ib;q)\|_{L^2_+\to L^\infty_+} \|q\|_{L^2}\\
&\lesssim \sqrt{T} \bigl( \tfrac1{\sqrt b} + \tfrac1b\bigr)  \|q\|_{L^2(|x|\geq L)}\|q\|_{L^2}^2
\end{align*}
uniformly for $L\geq 0$, $q\in Q$, $|t|\leq T$, and $b>0$.

In view of the estimate
\begin{equation*}
\bnorm{ e^{-b|\partial|}g }_{H^1}^2
= \int (1+\xi^2)e^{-2b|\xi|} |\wh{g}(\xi)|^2\,d\xi
\lesssim_b \snorm{ \wh{g} }_{L^\infty}^2
\lesssim_b \norm{g}_{L^1}^2,
\end{equation*}
we deduce that the set
\begin{equation*}
\ms{H} = \big\{ e^{-b|\partial|}g : g\in\ms{G} \big\}
\end{equation*}
is bounded in $H^1(\R)$, and so it is bounded and equicontinuous in $L^2(\R)$.  The set $\ms{H}$ is also tight in $L^2(\R)$.  To see this, we may use the Poisson integral formula~\eqref{PIF} and Minkowski's inequality to estimate
\begin{align*}
\bnorm{e^{-b|\partial|}g }_{L^2(|x|\geq L)}
&\lesssim \int_{|y|\geq L_0} \bnorm{ \tfrac{b}{x^2+b^2} }_{L^2} \,|g(y)|\, dy \\
&\quad + \int_{|y|\leq L_0} \bnorm{ \tfrac{b}{x^2+b^2} }_{L^2(|x|\geq L-L_0)}\, |g(y)|\, dy \\
&\lesssim \int_{|y|\geq L_0} \tfrac1{b^{1/2}}|g(y)|\,dy + \tfrac{b}{(L-L_0)^{3/2}} \norm{g}_{L^1},
\end{align*}
for any $L\geq 2L_0\geq 1$.  As $\ms{G}$ is bounded and tight in $L^1(\R)$, we may pick $L_0$ large and then $L\gg L_0$ to make the right-hand side above arbitrarily small.

Thus $\ms{H}$ is precompact in $L^2(\R)$.  As the free Schr\"odinger propagator is continuous on $L^2(\R)$, we conclude that the set
$$
\big\{I_+\big[ A_0(t,x+ib) 2tq C_+\ol q A(t,x+ib;q) q\bigr]:\ n\geq 1, \ |t|\leq T\big\}
$$
is precompact in $L^2(\R)$ and so tight in $L^2(\R)$.  This completes the proof of the proposition.
\end{proof}

\begin{proposition}[Tightness in space]\label{t:tight 2}
Suppose $q^0$ and $\{q^0_n\}_{n\in\N}$ satisfy \eqref{5hyp} and let $q_n(t)$ denote the solutions to \eqref{CM-DNLS} with initial data $q_n(0)=q_n^0$.  Given $T>0$, the set $\{ q_n(t): n\geq 1 \text{ and } |t|\leq T\}$ is tight in $L^2(\R)$.
\end{proposition}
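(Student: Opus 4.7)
The plan is to bridge tightness on a line slightly off the real axis (already available from Proposition~\ref{t:tight 1}) and tightness on $\R$ itself, by showing that $q_n(t,\cdot+ib)$ approximates $q_n(t,\cdot)$ uniformly in $L^2$ when $b$ is small. The key fact that makes this work is that the set $\{q_n(t): n\geq1,\ |t|\leq T\}$ is $L^2$-equicontinuous: since $q^0_n\to q^0$ in $L^2_+(\R)$, the collection $\{q^0_n\}$ is precompact, hence bounded and equicontinuous, and $\sup_n M(q^0_n)<M^*$; so by Definition~\ref{t:M* def}, the time-$T$ orbit is $L^2$-equicontinuous.

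For $f\in L^2_+(\R)$, the Poisson representation \eqref{PIF11} gives $f(x+ib)=[e^{-b|\partial|}f](x)$, and so by Plancherel,
\begin{equation*}
\norm{f-f(\cdot+ib)}_{L^2}^2 = \int_0^\infty (1-e^{-b\xi})^2\, |\wh f(\xi)|^2\,d\xi.
\end{equation*}
Given $\eps>0$, equicontinuity supplies $N=N(\eps)$ with
\begin{equation*}
\sup_{n\geq 1,\,|t|\leq T} \int_N^\infty |\wh{q_n(t)}(\xi)|^2\,d\xi < \eps,
\end{equation*}
which handles the contribution from $\xi>N$. For $\xi\leq N$ we bound $(1-e^{-b\xi})^2\leq (bN)^2$, and use the mass conservation $\norm{q_n(t)}_{L^2}=\snorm{q_n^0}_{L^2}\lesssim \snorm{q^0}_{L^2}$ to conclude
\begin{equation*}
\sup_{n\geq 1,\,|t|\leq T} \norm{q_n(t)-q_n(t,\cdot+ib)}_{L^2} \longrightarrow 0 \qtq{as} b\to 0^+ .
\end{equation*}

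Given any $\eps>0$, I would first pick $b=b(\eps)>0$ small so that the above supremum is less than $\eps$. With this fixed $b$, Proposition~\ref{t:tight 1} produces $L=L(\eps,b)$ so that $\int_{|x|>L}|q_n(t,x+ib)|^2\,dx<\eps^2$ uniformly for $n\geq1$ and $|t|\leq T$. The triangle inequality then gives
\begin{equation*}
\bigl\|q_n(t)\bigr\|_{L^2(|x|>L)} \leq \bigl\|q_n(t,\cdot+ib)\bigr\|_{L^2(|x|>L)} + \bigl\|q_n(t)-q_n(t,\cdot+ib)\bigr\|_{L^2} \lesssim \eps,
\end{equation*}
which is precisely the tightness claim. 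The main (already-resolved) obstacle was the $L^2$-equicontinuity of orbits, which is exactly why $M^*$ was introduced in Definition~\ref{t:M* def} and established in Section~\ref{S:3}; the passage from $b>0$ back to $b=0$ is then a routine interplay between the uniform high-frequency smallness and the Poisson kernel on low frequencies.
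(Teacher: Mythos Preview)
Your proof is correct, and it takes a genuinely different route from the paper's own argument.

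The paper fixes an arbitrary $b>0$ and then splits $q_n(t)=P_{\leq N}q_n(t)+P_{>N}q_n(t)$. The high-frequency piece is dispatched by equicontinuity. For the low-frequency piece, the paper writes $P_{\leq N}q_n(t)=K*\bigl(e^{-b|\partial|}q_n(t)\bigr)$ where $K$ is the Schwartz kernel of the Fourier multiplier $P_{\leq N}e^{b|\partial|}$; the decay $K(x)\lesssim_{b,N}\langle x\rangle^{-2}$ is then used to transfer tightness from $q_n(t,\cdot+ib)$ (furnished by Proposition~\ref{t:tight 1}) to $P_{\leq N}q_n(t)$. By contrast, you keep $b$ as a free parameter and use equicontinuity to show directly that $\sup_{n,|t|\leq T}\|q_n(t)-q_n(t,\cdot+ib)\|_{L^2}\to 0$ as $b\to 0^+$, so that tightness at height $b$ immediately yields tightness on the real axis via the triangle inequality. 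Both arguments rest on the same two inputs (spatial $L^2$-equicontinuity of orbits and Proposition~\ref{t:tight 1}), but yours is more elementary: it avoids introducing and estimating the auxiliary kernel $K$, at the modest cost of having to choose $b$ small rather than arbitrary.
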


\begin{proof}
For a frequency cutoff $N\geq 1$ to be chosen later, we decompose $q_n = P_{\leq N}q_n + P_{>N}q_n$.  Let $\chi : \R\to [0,1]$ be a smooth cutoff function equal to $1$ on $|x|\geq 2$ and to $0$ on $|x|\leq 1$.  For $R>0$, we define $\chi_{>R}(x) := \chi(\frac{x}{R})$.  We may then bound
\begin{equation*}
\norm{ \chi_{>R} q_n(t)}_{L^2}
\leq \norm{ \chi_{>R} P_{\leq N}q_n(t) }_{L^2} + \norm{ P_{>N} q_n(t)}_{L^2}.
\end{equation*}
The hypotheses \eqref{5hyp} guarantee $L^2$-equicontinuity of the solutions $q_n(t)$.  Therefore, we can choose $N\geq 1$ sufficiently large to render
 $\sup_{n\geq 1}\sup_{|t|\leq T} \norm{ P_{>N} q_n(t)}_{L^2}$ arbitrarily small.  

To demonstrate tightness of the low frequencies, for $b>0$ fixed we write
\begin{equation*}
P_{\leq N} q_n(t)= \big( P_{\leq N} e^{b|\partial|} \big) \big( e^{-b|\partial|}q_n(t) \big)  = K * \big( e^{-b|\partial|}q_n(t) \big) ,
\end{equation*}
where the convolution kernel $K$ is the Schwartz function
\begin{equation*}
K(x)
= \tfrac{1}{\sqrt{2\pi}} \int e^{ix\xi} e^{b|\xi|} \varphi\big( \tfrac{\xi}{N} \big)\, d\xi
= \sqrt{ \tfrac{2}{\pi} } \int_0^\infty \cos(x\xi) e^{b\xi} \varphi\big( \tfrac{\xi}{N} \big)\, d\xi
\end{equation*}
and $\varphi : \R \to [0,1]$ denotes the cutoff function used to define the Littlewood--Paley projections.  Integrating by parts twice yields the easy bound
\begin{equation*}
K(x) \lesssim_{b,N} \langle x \rangle^{-2} .
\end{equation*}
Using this and the observation that $\chi_{>R} [(\chi_{\leq \frac  R4}K)* (\chi_{\leq \frac R4}q_n)]=0$, which follows from support considerations, we may estimate
\begin{align*}
\norm{ \chi_{>R} P_{\leq N} q_n(t)}_{L^2} 
&\leq \bnorm{ K * \big[ \chi_{>\frac R4}e^{-b|\partial|}q_n(t) \big] }_{L^2} + \bnorm{ \big[ \chi_{>\frac R4}K\big] * \big[ \chi_{\leq \frac R4}e^{-b|\partial|}q_n(t) \big] }_{L^2} \\
&\lesssim \snorm{ \chi_{>\frac R4}e^{-b|\partial|}q_n(t) }_{L^2} + R^{-1} \snorm{\chi_{\leq \frac R4}e^{-b|\partial|}q_n(t) }_{L^2} \\
&\lesssim \snorm{ \chi_{>\frac R4}e^{-b|\partial|}q_n(t) }_{L^2} + R^{-1} \|q_n^0\|_{L^2}.
\end{align*}
By Proposition~\ref{t:tight 1}, the functions $[e^{-b|\partial|}q_n](t,x) = q_n(t, x + ib)$ form an $L^2$-tight set for $n\geq 1$ and $|t|\leq T$.  Therefore, we may choose $R$ sufficiently large to make the right-hand side above arbitrarily small, which demonstrates that $\{P_{\leq N}q_n(t):\ n\geq 1,\ |t|\leq T\}$ is tight in $L^2(\R)$, as desired.
\end{proof}

We are now ready to prove well-posedness of \eqref{CM-DNLS} in $L^2_+(\R)$ for data satisfying $M(q^0)<M^*$, with $M^*$ denoting the equicontinuity threshold introduced in Definition~\ref{t:M* def}.  This will complete the proof of both Theorem~\ref{t:wpd} and Theorem~\ref{t:wpf}. 

\begin{proof}[Proof of well-posedness in $L^2_+(\R)$]
We will show that the data-to-solution map for \eqref{CM-DNLS} extends uniquely from $\{q\in H^\infty_+(\R):\, \langle x\rangle q \in L^2(\R) \text{ and }M(q)<M^*\}$ to a jointly continuous map $\Phi : \R\times \{q\in L^2_+(\R):\ M(q)<M^*\} \to L^2_+$.  

Given initial data $q^0\in L^2_+(\R)$ with $M(q^0)<M^*$, let $\{ q_n^0 \}_{n\geq 1}$ be a sequence satisfying \eqref{5hyp}.  Applying Theorem~\ref{t:conv} to the sequence $\{ q_n^0 \}_{n\geq 1}$, we see that the corresponding $H^\infty_+(\R)$ solutions $q_n(t)$ to~\eqref{CM-DNLS} converge in $L^2(\R)$ and the limit is independent of the sequence $\{ q_n^0 \}_{n\geq 1}$.  Consequently,
\begin{equation*}
\Phi(t,q^0) := \lim_{n\to\infty} q_n(t)
\end{equation*}
is well-defined.
	
We must show that $\Phi$ is jointly continuous.  Fix $T>0$ and $q^0\in L^2_+(\R)$ with $M(q^0)<M^*$.  Let $\{ q_n^0 \}_{n\geq 1}$ be an $L^2_+$ sequence that converges to $q^0$ in $L^2(\R)$; without loss of generality, we may assume that $\sup_{n\geq 1} M(q_n^0)<M^*$.  By the definition of $\Phi$, we may choose another sequence $\wt{q}_n(t)$ of $H^\infty_+$ solutions to~\eqref{CM-DNLS} so that $\langle x\rangle \wt{q}_n(0)\in L^2(\R)$ and
\begin{equation}\label{wp 1}
\sup_{|t|\leq T}  \norm{ \Phi(t,q_n^0) - \wt{q}_n(t) }_{L^2} \to 0 \quad\text{as }n\to\infty .
\end{equation}
In particular, $\wt{q}_n(0) \to q^0$ in $L^2(\R)$, and so Theorem~\ref{t:conv} yields
\begin{equation}\label{wp 2}
\sup_{|t|\leq T}  \norm{ \wt{q}_n(t) - \Phi(t,q^0) }_{L^2} \to 0 \quad\text{as }n\to\infty .
\end{equation}

Given $\{t_n\}_{n\geq 1}\subset[-T,T]$ that converges to some $t\in [-T,T]$, we may bound
\begin{align*}
&\norm{ \Phi(t_n,q_n^0) - \Phi(t,q^0) }_{L^2}\\
&\qquad\leq \norm{ \Phi(t_n,q_n^0) - \wt{q}_n(t_n) }_{L^2} +  \norm{ \wt{q}_n(t_n)-\wt{q}_n(t)  }_{L^2} + \norm{\wt{q}_n(t)  - \Phi(t,q^0) }_{L^2}\\
&\qquad\leq \sup_{|t|\leq T}  \norm{ \Phi(t,q_n^0) - \wt{q}_n(t) }_{L^2} + \norm{ \wt{q}_n(t_n)-\wt{q}_n(t) }_{L^2} +\sup_{|t|\leq T} \norm{ \wt{q}_n(t) - \Phi(t,q^0) }_{L^2}.
\end{align*}
The right-hand side above converges to zero as $n\to\infty$ by \eqref{wp 1}, \eqref{wp 2}, and \eqref{equicty in t}.  This demonstrates that $\Phi$ is jointly continuous.
\end{proof}

Finally, we prove the explicit formula for general $L^2_+(\R)$ initial data:

\begin{proof}[Proof of Theorem~\ref{t:magic formula}]
From Theorem~\ref{T:smooth EF}, we know that the explicit formula holds for initial data that is smooth and well-decaying.  With this in mind, we choose a sequence $q^0_n$ of such initial data that converges to $q^0$ in $L^2$-sense and satisfies $M(q^0_n)<M^*$.   Now we simply need to show convergence of both sides of \eqref{smooth EF}.  In the case of the left-hand side, this is immediate from the well-posedness just proved and \eqref{266}.

The right-hand side requires further discussion.  In view of \eqref{Schrod Poisson} and \eqref{4.44}, we merely need to show that
\begin{align*}
2 t q^0_n C_+ \smash[b]{\ol q}^0_n A(t,z;q^0_n) q^0_n \to 
	2 t q^0 C_+ {\ol q}^0 A(t,z;q^0) q^0 \quad \text{in $L^1$ sense.}
\end{align*}
Employing \eqref{A 4}, elementary inequalities reduce this question to showing that
\begin{align*}
\lim_{n\to\infty}\ \bigl\| A(t,z;q^0_n) - A(t,z;q^0)\bigr\|_{L^2_+\to L^\infty_+} =0.
\end{align*}
This in turn follows from \eqref{A 4}, \eqref{A 444}, and the resolvent identity.
\end{proof}

\section{Well-posedness in $H^s_+(\R)$}\label{S:6}

The goal of this section is to prove 

\begin{theorem}[Global well-posedness in $H^s_+(\R)$]\label{T:gwp in Hs}
Fix $0<s<1$. Both \eqref{CM-DNLS} equations {\slshape(}focusing and defocusing{\slshape)} are globally well-posed in the space
\begin{equation*}
B_{M^*} = \{ q\in H^s_+(\R) : \ M(q)< M^* \},
\end{equation*}
where $M^*$ denotes the equicontinuity threshold introduced in Definition~\ref{t:M* def}.
\end{theorem}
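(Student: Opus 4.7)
My plan is to bootstrap from the scaling-critical well-posedness of Section~5 by establishing two ingredients: persistence of $H^s$-regularity along the $L^2_+$-data-to-solution map $\Phi$, and $H^s$-equicontinuity of orbits. As signaled in the introduction, the novel input is an $H^s$-equicontinuity argument grounded in Loewner's theorem on operator monotone functions.

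First I would approximate. Given $q^0\in H^s_+(\R)$ with $M(q^0)<M^*$, choose mollifications $q^0_n\in H^\infty_+(\R)$ with $\langle x\rangle q^0_n\in L^2(\R)$, $q^0_n\to q^0$ in $H^s_+$, and $\sup_n M(q^0_n)<M^*$. Theorem~\ref{T:GL} yields global smooth solutions $q_n(t)$, and Theorem~\ref{t:conv} ensures these converge to $\Phi(t,q^0)$ in $C_tL^2_x$ on any compact time interval.

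The heart of the argument is to construct a near-conserved quantity comparable with the $H^s$-seminorm. For $0<s<1$, Loewner's theorem yields the integral representation
$$\lambda^{s}=\tfrac{\sin(\pi s)}{\pi}\int_0^\infty\tfrac{\lambda\,\kappa^{s-1}}{\lambda+\kappa}\,d\kappa,$$
so that integrating the quadratic quantity $\beta\sbrack{2}(\kappa,q)=\int \frac{\xi}{\xi+\kappa}|\wh q(\xi)|^2\,d\xi$ against the weight $\kappa^{2s-1}\,d\kappa$ reproduces $\|q\|_{\dot H^s}^2$ up to a constant---directly for $s\in(0,\tfrac12)$, and after subtracting the leading large-$\kappa$ asymptotics of $\beta\sbrack{2}$ for $s\in[\tfrac12,1)$. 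Coupling this with the conservation of $\beta(\kappa,q)$ from Lemma~\ref{L:conserv law}, and controlling the higher-order trace contribution $\beta-\beta\sbrack{2}$ from \eqref{beta0 higher} by the favorable-sign argument of Theorem~\ref{T:defoc equi} in the defocusing case, or by subordinating it to the quadratic part as in Theorem~\ref{T:foc equi} in the focusing case (using \eqref{op est 4} and $M(q)<M^*\leq 2\pi$), yields uniform bounds and $H^s$-equicontinuity of the family $\{q_n(t):n\geq 1,|t|\leq T\}$ via the Fourier-side criterion~\eqref{equicty 1}.

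With $H^s$-equicontinuity in hand, the $L^2$-convergence $q_n\to\Phi(\cdot,q^0)$ automatically upgrades to convergence in $C_t([-T,T];H^s_+)$, giving persistence of regularity together with uniform-in-$n$ $H^s$-bounds. Joint continuity of the extended data-to-solution map $\Phi:\R\times B_{M^*}\to H^s_+$ then follows by the template at the end of Section~5, combined with an $H^s$-analogue of Lemma~\ref{L:equi in t} proved by the same Bernstein--Duhamel decomposition. The main obstacle will be the construction of the Loewner conserved quantity in the focusing case: the subordination of the higher-order term to $\beta\sbrack{2}$ must survive the weighted integration in $\kappa$ demanded by Loewner, particularly across the threshold $s=\tfrac12$ where the naive representation diverges and must be renormalized in a manner compatible with the sharp mass bound $M(q)<2\pi$.
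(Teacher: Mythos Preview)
Your route differs from the paper's and carries a genuine gap in the focusing case. The paper does not synthesize an $H^s$-quantity by integrating the scalar conserved quantities $\beta(\kappa,q)$ in $\kappa$. Instead it exploits the Lax structure more directly: by \eqref{L conj} one has both $q(t)=U(t;0)q(0)$ and $\mc L_{q(t)}=U(t;0)\mc L_{q(0)}U(t;0)^*$, so for \emph{any} Borel function $F$ the quantity $\langle q(t),\,F\bigl((\mc L_{q(t)}+\kappa)^2\bigr)\,q(t)\rangle$ is exactly conserved. The paper chooses the operator-monotone family $F_K(E)=c_s\int_K^\infty \tfrac{E\lambda^{s-1}}{E+\lambda}\,d\lambda$ and proves a comparison lemma showing $\langle q, F_K((\mc L_q+\kappa)^2)q\rangle\approx\langle q, F_K((\mc L_0+\kappa)^2)q\rangle$ uniformly on $Q^*$; the only hypothesis needed for this comparison is $L^2$-equicontinuity of $Q^*$, which is exactly what $M(q)<M^*$ provides by definition. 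Taking $K=0$ gives $H^s$-boundedness, sending $K\to\infty$ gives $H^s$-equicontinuity, and there are no higher-order trace remainders to control and no threshold at $s=\tfrac12$.

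Your approach, by contrast, requires subordinating $\beta-\beta\sbrack{2}$ to $\beta\sbrack{2}$ after weighted integration in $\kappa$, and in the focusing case you invoke the argument of Theorem~\ref{T:foc equi}. That argument genuinely depends on the sharp inequality \eqref{op est 4} and hence on $M(q)<2\pi$. Your assertion $M^*\leq 2\pi$ is incorrect: the paper proves $M^*\geq 2\pi$ (Theorem~\ref{t:equicty}) and leaves open whether the inequality is strict. If $M^*>2\pi$, your scheme simply does not reach the range $2\pi\leq M(q)<M^*$, whereas the theorem is stated for all of $B_{M^*}$. The paper's operator-theoretic route sidesteps this entirely because it never needs a sign or smallness condition on the perturbation---only $L^2$-equicontinuity of orbits, which is available up to $M^*$ by construction.
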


In view of the $L^2_+(\R)$ global well-posedness result demonstrated in the preceding section, it will suffice to prove that \eqref{CM-DNLS} solutions satisfy a priori $H^s$ bounds, and that the orbits of $H^s$-bounded and equicontinuous sets of initial data form an $H^s$-equicontinuous set.  These two claims will be taken up in Proposition~\ref{P:s equi}.  The proof of this proposition is based on a novel argument, in which the following family of functions play a central role:

\begin{definition}
Fix $0<s<1$.  For each $K\geq 0$, we define the function 
\begin{align}\label{F defn}
F_K(E) := \frac{\sin(\pi s)}{\pi}\int_K^\infty \frac{E \lambda^{s-1}\,d\lambda}{E+\lambda} \qtq{for each} E\in[0,\infty).
\end{align}
\end{definition}

The particular structure of these functions is dictated by our use of Loewner's Theorem on operator monotone functions.  Concretely, for any positive definite operators $A\geq B\geq 0$, it follows that $F_K(A)\geq F_K( B) \geq 0$.

If one chooses $K=0$, then \eqref{F defn} can be evaluated exactly: $F_0(E) = E^s$.  It is to ensure this identity that we have included the prefactor in \eqref{F defn}.  Clearly $F_K(E)$ is a decreasing function of $K$; thus $F_K(E)\leq E^s$ for all $E\geq 0$ and all $K\geq 0$.

For $K>0$, we know of no explicit formula for $F_K(E)$; however, it is not difficult to describe the order of magnitude of this function:
\begin{align}\label{F size}
F_K(E) \approx \begin{cases} E^s  &: E\geq K \\ E K^{s-1} &: 0\leq E \leq K\end{cases}
\end{align}
uniformly in $K\geq 0$.  The overall shape described by \eqref{F size} also plays an important role in the selection of the function $F_K(E)$, through the following expression of equicontinuity:

\begin{lemma}\label{L:6.3}
A bounded subset $Q\subseteq H^s_+(\R)$ is $H^s$-equicontinuous if and only if
\begin{align}\label{F equi}
  \sup_{q\in Q} \ \bigl\langle q,  F_K\bigl( (\mc L_0 + \kappa)^2 \bigr) q \bigr\rangle  \to 0  \qtq{as} K\to\infty
\end{align}
for any single $\kappa>0$.
\end{lemma}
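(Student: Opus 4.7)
The plan is to reduce the statement to a pointwise analysis in the Fourier variable via the functional calculus. Since $\mc L_0 = -i\partial$ acts on $L^2_+(\R)$ as multiplication by $\xi \geq 0$ on the Fourier side, the spectral theorem gives
\[
\bigl\langle q, F_K\bigl((\mc L_0+\kappa)^2\bigr) q\bigr\rangle = \int_0^\infty F_K\bigl((\xi+\kappa)^2\bigr) |\wh q(\xi)|^2\, d\xi.
\]
The two directions then follow directly from the two-sided estimate \eqref{F size}, together with the elementary observations that $F_K(E) \leq E^s$ (take $K=0$ in \eqref{F defn}), that $F_K(E)$ is monotone increasing in $E$ (the integrand $\lambda^{s-1}/(1+\lambda/E)$ is increasing in $E$), and that $F_K(E) \to 0$ for each fixed $E$ as $K \to \infty$ (dominated convergence, since $\lambda^{s-1}/(E+\lambda) \lesssim_E \lambda^{s-2}$ is integrable at $\infty$).

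For the direction \eqref{F equi}$\Rightarrow$\,equicontinuity, I would use the lower bound in \eqref{F size}: $F_K(E) \gtrsim E^s$ when $E \geq K$. Choosing $\kappa' = \sqrt{K}-\kappa$ (for $K$ large), we get $F_K((\xi+\kappa)^2) \gtrsim (\xi+\kappa)^{2s} \gtrsim_\kappa (\xi+1)^{2s}$ for $\xi \geq \kappa'$, hence
\[
\int_{\kappa'}^\infty (\xi+1)^{2s} |\wh q(\xi)|^2\, d\xi \lesssim_\kappa \bigl\langle q, F_K\bigl((\mc L_0+\kappa)^2\bigr) q\bigr\rangle,
\]
and the right-hand side tends to $0$ uniformly over $Q$ as $K \to \infty$ by hypothesis; this is exactly the Fourier-side criterion \eqref{equicty 1}.

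For the converse, given $\eps>0$, I would use \eqref{equicty 1} to pick $\kappa_1$ with $\sup_{q\in Q}\int_{\kappa_1}^\infty (\xi+1)^{2s}|\wh q(\xi)|^2\,d\xi < \eps$. Then I would split the integral at $\xi = \kappa_1$: the tail part is dominated using $F_K(E)\leq E^s$ by the equicontinuity tail and is therefore $\lesssim_\kappa \eps$, while the low-frequency part is handled via monotonicity of $F_K$ in its argument:
\[
\int_0^{\kappa_1} F_K\bigl((\xi+\kappa)^2\bigr) |\wh q(\xi)|^2\, d\xi \leq F_K\bigl((\kappa_1+\kappa)^2\bigr)\,\|q\|_{L^2}^2,
\]
and the prefactor vanishes as $K\to\infty$ by the pointwise decay of $F_K$ noted above, uniformly in $q\in Q$ thanks to the $L^2$-boundedness of $Q$. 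Finally, the independence of the choice of $\kappa>0$ is automatic: different choices of $\kappa$ yield comparable weights on the Fourier side, so \eqref{F equi} for one $\kappa$ is equivalent to \eqref{F equi} for any other. There is no serious obstacle; the work is entirely in invoking \eqref{F size} and the monotonicity/decay properties of $F_K$ in the right places.
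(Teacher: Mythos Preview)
Your proof is correct and follows essentially the same approach as the paper's: both reduce to the Fourier side and exploit the two-sided estimate \eqref{F size}, splitting into low and high frequencies. The only cosmetic differences are that the paper uses smooth Littlewood--Paley projections $q_{\leq N},\,q_{>N}$ rather than sharp cutoffs, and reads off both the low-frequency smallness and the high-frequency domination directly from \eqref{F size} rather than separately invoking monotonicity of $F_K$ in $E$ and its pointwise decay in $K$.
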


\begin{proof}
In view of \eqref{F size}, we have
$$
\bigl\langle q,  F_K\bigl( (\mc L_0 + \kappa)^2 \bigr) q \bigr\rangle
	\lesssim \bigl(\tfrac{\kappa^2+N^2}{K}\bigr)^{1-s} \bigl\| (\mc L_0 + \kappa)^s q_{\leq N} \bigr\|_{L^2}^2
		+ \bigl\|  (\mc L_0 + \kappa)^s q_{>N} \bigr\|_{L^2}^2 .
$$
Choosing $N$ large and then sending $K\to\infty$, we see that equicontinuous sets satisfy \eqref{F equi}.  Conversely,
$$
\| q_{>N} \|_{H^s}^2  \lesssim \bigl\langle q,  F_K\bigl( (\mc L_0 + \kappa)^2 \bigr) q \bigr\rangle  \qtq{when} K=(N+\kappa)^2 .
$$
Thus, \eqref{F equi} implies that $Q$ is equicontinuous.
\end{proof}

The use of $\mc L_0$ in this lemma, makes for an easy connection to $H^s$-equicontinuity; however, it is the Lax operator $\mc L_{q(t)}$ associated to the solution itself that enjoys a close connection with the \eqref{CM-DNLS} flows.  Our next result bridges this divide.

\begin{lemma}\label{L:6.4}
Fix $s\in(0,1)$.  Suppose $Q\subseteq H^\infty_+(\R)$ is bounded and equicontinuous in $L^2(\R)$.  Then there exists $\kappa_0=\kappa_0(Q)$ so that
\begin{align}\label{F2F}
\bigl\langle q,  F_K\bigl( (\mc L_q + \kappa)^2 \bigr) q \bigr\rangle 
	\approx \bigl\langle q,  F_K\bigl( (\mc L_0 + \kappa)^2 \bigr) q \bigr\rangle  	
\end{align}
uniformly for $q\in Q$,  $\kappa\geq\kappa_0$, and $K\geq 0$.
\end{lemma}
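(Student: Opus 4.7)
The plan is to leverage Loewner's Theorem on operator monotone functions. The starting observation is that for each fixed $\lambda>0$, the function $E\mapsto \tfrac{E}{E+\lambda}=1-\tfrac{\lambda}{E+\lambda}$ is operator monotone on $[0,\infty)$, so the same is true of the positive integral superposition $F_K$ defined in \eqref{F defn}. Consequently, the problem reduces to producing a two-sided operator inequality between $(\mc L_q+\kappa)^2$ and $(\mc L_0+\kappa)^2$, pushing it through $F_K$ via Loewner, and then absorbing the resulting scale factors in a manner that is uniform in $K$.

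The first step is to promote the $s=1$ case of \eqref{equiv norms}, namely $\|(\mc L_q+\kappa)f\|_{L^2}\approx \|(\mc L_0+\kappa)f\|_{L^2}$, to an operator inequality between squared operators. On the common form domain $H^1_+(\R)$ this yields constants $0<c\leq C$, depending only on $Q$, such that
\[
c^{2}(\mc L_0+\kappa)^2 \;\leq\; (\mc L_q+\kappa)^2 \;\leq\; C^{2}(\mc L_0+\kappa)^2
\]
uniformly for $q\in Q$ and $\kappa\geq\kappa_0$. Loewner monotonicity then delivers
\[
F_K\bigl(c^{2}(\mc L_0+\kappa)^2\bigr) \;\leq\; F_K\bigl((\mc L_q+\kappa)^2\bigr) \;\leq\; F_K\bigl(C^{2}(\mc L_0+\kappa)^2\bigr)
\]
as a bound between positive self-adjoint operators on $L^2_+(\R)$.

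Next, a change of variable $\lambda\mapsto a\lambda$ in the definition of $F_K$ produces the exact scaling identity
\[
F_K(aE)=a^s F_{K/a}(E) \qquad \text{for } a>0,\ E\geq 0,\ K\geq 0,
\]
converting the preceding two-sided bound into
\[
c^{2s}F_{K/c^{2}}\bigl((\mc L_0+\kappa)^2\bigr) \;\leq\; F_K\bigl((\mc L_q+\kappa)^2\bigr) \;\leq\; C^{2s}F_{K/C^{2}}\bigl((\mc L_0+\kappa)^2\bigr).
\]
To finish, I will verify directly from the size estimate \eqref{F size} that $F_{K'}(E)\approx F_K(E)$ uniformly in $E\geq 0$ and $K\geq 0$ whenever $K'$ and $K$ are comparable; this reduces to a quick case analysis in the three regions $E\leq\min(K,K')$, $\min(K,K')\leq E\leq\max(K,K')$, and $E\geq\max(K,K')$. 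Functional calculus then upgrades this to the operator equivalence $F_K((\mc L_q+\kappa)^2)\approx F_K((\mc L_0+\kappa)^2)$, and pairing both sides against $q\in Q\subset H^\infty_+(\R)$ yields \eqref{F2F}.

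The main point of delicacy is not the squared operator inequality—that is essentially \eqref{equiv norms} rewritten—but the non-homogeneity of $F_K$, which obstructs any direct quantitative transfer from $\mc L_0$ to $\mc L_q$. The dual role of the integral representation \eqref{F defn} (supporting both Loewner monotonicity and the scaling identity $F_K(aE)=a^sF_{K/a}(E)$), combined with the quasi-homogeneous shape of $F_K$ made quantitative by \eqref{F size}, is what makes the Loewner argument effective here uniformly in $K\geq 0$.
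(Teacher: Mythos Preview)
Your proposal is correct and follows essentially the same route as the paper: establish the two-sided inequality $c^{2}(\mc L_0+\kappa)^2 \leq (\mc L_q+\kappa)^2 \leq C^{2}(\mc L_0+\kappa)^2$, push it through the operator monotone function $F_K$, and then absorb the scale factors using the shape estimate~\eqref{F size}. The only cosmetic differences are that the paper re-derives the squared-operator inequality directly from~\eqref{rc equi} (with explicit constants $\tfrac{7}{16}$ and $\tfrac{25}{16}$) rather than quoting the $s=1$ case of~\eqref{equiv norms}, and that the paper passes directly from $F_K(aE)$ to $F_K(E)$ via~\eqref{F size}, whereas you interpose the exact scaling identity $F_K(aE)=a^sF_{K/a}(E)$ before invoking~\eqref{F size} to compare $F_{K/a}$ with $F_K$; both arguments amount to the same verification.
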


\begin{proof}
Using \eqref{rc equi}, we may choose $\kappa_0(Q)>0$ so that
\begin{align}\label{1614}
\sup_{q\in Q} \bigl\| q C_+ \ol q R_0(\kappa) \bigr\|_\op \leq \tfrac14 \quad\text{uniformly for $q\in Q$ and $\kappa\geq\kappa_0$}.
\end{align}
Using elementary manipulations and \eqref{1614}, we find that
\begin{align*}
\Bigl| \bigl\langle f, (\mc L_{q} + \kappa)^{2}f\bigr\rangle -  \bigl\langle f, (\mc L_0 + \kappa)^{2}f\bigr\rangle \Bigr|
	&\leq \bigl\| q C_+ \ol q f \bigr\|_{L^2} \bigl[ 2\|(\mc L_0 + \kappa)f \|_{L^2}  +\|q C_+ \ol q f  \|_{L^2} \bigr]\\
&\leq  \tfrac9{16} \|(\mc L_0 + \kappa)f \|_{L^2}^2
\end{align*}
for all $\kappa\geq \kappa_0$ and any $f\in H^1_+(\R)$.  Equivalently,  
\begin{align*}
\tfrac7{16} (\mc L_0 + \kappa)^{2} \leq (\mc L_{q} + \kappa)^{2} \leq \tfrac{25}{16} (\mc L_0 + \kappa)^{2}
	\quad\text{for all $q\in Q$ and $\kappa\geq\kappa_0$}.
\end{align*}
As the functions $F_K$ are operator monotone, we deduce that
\begin{align*}
F_K\bigl(  \tfrac7{16} (\mc L_0 + \kappa)^{2} \bigr)
	\leq F_K\bigl(  (\mc L_{q} + \kappa)^{2} \bigr)
	\leq F_K\bigl( \tfrac{25}{16} (\mc L_0 + \kappa)^{2} \bigr).
\end{align*}
From the overall shape of $F_K(E)$ described in \eqref{F size}, it then follows that
\begin{align}
c F_K\bigl(  (\mc L_0 + \kappa)^{2} \bigr)
	\leq F_K\bigl(  (\mc L_{q} + \kappa)^{2} \bigr)
	\leq c^{-1} F_K\bigl( (\mc L_0 + \kappa)^{2} \bigr).
\end{align}
for some (small) number $c$ depending only on $s$.  The claim \eqref{F2F} is a direct consequence of this.
\end{proof}

\begin{proposition}\label{P:s equi}
Fix $s\in(0,1)$.  Assume $Q\subset \{q\in H^\infty_+(\R):\ M(q)<M^*\}$ is bounded and equicontinuous in $H^s(\R)$.  Then
\begin{equation}\label{6:Q*}
Q^* := \{ e^{tJ\nabla\ham_\pm} q^0 : q^0\in Q,\ t\in\R \} 
\end{equation}
is also bounded and equicontinuous in $H^s(\R)$.
\end{proposition}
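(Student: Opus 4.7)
The plan is to extract from the Lax-pair structure a quantity that is conserved by the flow and which encodes $H^s$-equicontinuity through Lemmas~\ref{L:6.3} and~\ref{L:6.4}. The key observation is that Proposition~\ref{P:Pflow} yields $q(t)=U(t;0)q^0$ and $\mc L_{q(t)}=U(t;0)\mc L_{q^0}U^*(t;0)$ for a unitary propagator $U(t;0)$. By functional calculus,
\begin{align*}
F_K\bigl((\mc L_{q(t)}+\kappa)^2\bigr) = U(t;0)\, F_K\bigl((\mc L_{q^0}+\kappa)^2\bigr)\, U^*(t;0),
\end{align*}
and consequently the quadratic form
\begin{align*}
\bigl\langle q(t),\, F_K\bigl((\mc L_{q(t)}+\kappa)^2\bigr)\, q(t)\bigr\rangle
= \bigl\langle q^0,\, F_K\bigl((\mc L_{q^0}+\kappa)^2\bigr)\, q^0\bigr\rangle
\end{align*}
is a conservation law for the flow. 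This identity is the cornerstone of the argument.

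Next I would verify the hypotheses of Lemma~\ref{L:6.4} for both $Q$ and $Q^*$. The set $Q$ is $L^2$-bounded and equicontinuous because it is bounded in $H^s$ with $s>0$. Mass conservation gives $L^2$-boundedness of $Q^*$, while the assumption $\sup_{q^0\in Q} M(q^0) < M^*$ together with Definition~\ref{t:M* def} yields $L^2$-equicontinuity of $Q^*$ on each compact time interval; in the defocusing case (Theorem~\ref{T:defoc equi}) and in the focusing case under the mass bound $M<2\pi$ (Theorem~\ref{T:foc equi}), this promotes to global-in-time equicontinuity. Choosing $\kappa$ to exceed both $\kappa_0(Q)$ and $\kappa_0(Q^*)$ and sandwiching the conservation law on both sides via Lemma~\ref{L:6.4} then gives
\begin{align*}
\bigl\langle q(t), F_K\bigl((\mc L_0+\kappa)^2\bigr) q(t)\bigr\rangle
&\approx \bigl\langle q(t), F_K\bigl((\mc L_{q(t)}+\kappa)^2\bigr) q(t)\bigr\rangle \\
&= \bigl\langle q^0, F_K\bigl((\mc L_{q^0}+\kappa)^2\bigr) q^0\bigr\rangle
\approx \bigl\langle q^0, F_K\bigl((\mc L_0+\kappa)^2\bigr) q^0\bigr\rangle.
\end{align*}

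Lemma~\ref{L:6.3} then closes the loop: the $H^s$-equicontinuity of $Q$ sends the right-hand side to zero as $K\to\infty$, uniformly in $q^0 \in Q$, which transfers to the left-hand side uniformly in $q^0\in Q$ and $t$, yielding the $H^s$-equicontinuity of $Q^*$. For the $H^s$-boundedness of $Q^*$, I would combine this equicontinuity (to control the high-frequency tail) with mass conservation and Bernstein's inequality (to handle the low frequencies via $\|q_{\leq N}\|_{H^s}\lesssim N^s\|q\|_{L^2}$).

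The main obstacle is ensuring that the $L^2$-equicontinuity of $Q^*$ is uniform in time so that $\kappa_0(Q^*)$ is finite. This is transparent in the defocusing case and in the focusing case with $M<2\pi$ thanks to the global equicontinuity theorems of Section~\ref{S:3}; in the wider focusing range permitted by Definition~\ref{t:M* def}, the sandwich is available only on each compact time slab $[-T,T]$, which nevertheless suffices to carry through the $H^s_+(\R)$ well-posedness bootstrap.
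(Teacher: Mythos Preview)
Your approach is essentially the paper's: the conservation law $\langle q(t),F_K((\mc L_{q(t)}+\kappa)^2)q(t)\rangle = \langle q^0,F_K((\mc L_{q^0}+\kappa)^2)q^0\rangle$ via \eqref{L conj}, sandwiched by Lemma~\ref{L:6.4} and read through Lemma~\ref{L:6.3}, is exactly what the paper does.

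One small streamlining you miss: the paper gets $H^s$-boundedness of $Q^*$ directly from the $K=0$ case of the sandwich, since $F_0(E)=E^s$ and so $\langle q,F_0((\mc L_0+\kappa)^2)q\rangle = \|(\mc L_0+\kappa)^s q\|_{L^2}^2$. Your detour through Bernstein and high-frequency equicontinuity works, but the $K=0$ shortcut is cleaner and shows why the $F_K$ family was engineered this way.

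Your closing remarks about global versus local-in-time $L^2$-equicontinuity are accurate but slightly over-cautious. The paper simply writes ``By the results of Section~\ref{S:3}, the set of orbits $Q^*$ is bounded and equicontinuous in $L^2(\R)$'' and applies Lemma~\ref{L:6.4} to $Q^*$ as a whole. This is justified because Theorems~\ref{T:defoc equi} and~\ref{T:foc equi} give \emph{global}-in-time $L^2$-equicontinuity in all regimes where $M^*$ is actually known (defocusing, and focusing with $M<2\pi$). Your caveat about the hypothetical focusing range $2\pi\leq M<M^*$ is correct in principle, and your observation that the local-in-time version suffices for the well-posedness proof is also right; the paper just doesn't dwell on this distinction.
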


\begin{proof}
As the set $Q$ is bounded in $H^s(\R)$, it is bounded and equicontinuous in $L^2(\R)$.  By the results of Section~\ref{S:3}, the set of orbits $Q^*$ is bounded and equicontinuous in $L^2(\R)$.  Thus, Lemma~\ref{L:6.4} may be applied to $Q^*$;  we define $\kappa_0=\kappa_0(Q^*)$ accordingly.

Employing both parts of \eqref{L conj} from Proposition~\ref{P:Pflow}, we see that
\begin{align}\label{FK cons}
\big\langle q(t), F_K\bigl((\mc L_{q(t)} + \kappa)^{2}\bigr) q(t) \big\rangle
	=  \big\langle q(0), F_K\bigl((\mc L_{q(0)} + \kappa)^{2}\bigr) q(0) \big\rangle,
\end{align}
for any choices of $\kappa\geq \kappa_0(Q^*)$, $s\in(0,1)$, $K\geq 0$, and any $H^\infty_+(\R)$ solution  $q(t)$ to \eqref{CM-DNLS}.

Freezing $\kappa=\kappa_0(Q^*)$ and applying Lemma~\ref{L:6.4}, the identity \eqref{FK cons} yields
\begin{align}\label{FK cons'}
\big\langle q(t), F_K\bigl((\mc L_0 + \kappa)^{2}\bigr) q(t) \big\rangle
	\approx  \big\langle q(0), F_K\bigl((\mc L_0 + \kappa)^{2}\bigr) q(0) \big\rangle,
\end{align}
uniformly for $q(0)\in Q$, $t\in\R$, and $K\geq0$.

Recalling that $F_0(E)=E^s$, the $K=0$ case of \eqref{FK cons'} shows that $Q^*$ inherits $H^s$-boundedness from $Q$.  Moreover, sending $K\to\infty$ in \eqref{FK cons'} and employing Lemma~\ref{L:6.3}, we see that $Q^*$ inherits $H^s$-equicontinuity from $Q$.
\end{proof}

We are now ready to finish the proof of well-posedness in $H^s_+(\R)$.  Based on the tools we have developed, it is most convenient to construct the data-to-solution map by extension from the class of smooth solutions.  As the extension will be shown to be $H^s_+(\R)\to C([-T,T];H^s_+(\R))$ continuous, it coincides with the restriction to $H^s_+(\R)$ of the map constructed in the previous section for initial data in $L^2_+(\R)$. 

\begin{proof}[Proof of Theorem~\ref{T:gwp in Hs}]
Suppose $q^0\in H^s_+(\R)$ and $M(q^0)<M^*$.  Let $q_n(t)$ be a sequence of $H^\infty_+(\R)$ solutions with $q_n(0) \to q^0$ in $H^s$-sense.  Our sole obligation is to show that $q_n(t)$ converges in $C([-T,T];H^s_+(\R))$ for every finite $T>0$.  If such convergence did not hold, there would be a convergent sequence $t_n$ so that $q_n(t_n)$ did not converge in $H^s_+(\R)$.  We will refute this.

From the results of the previous section, we know that $q_n(t_n)$ converges in $L^2_+(\R)$.  Moreover, Proposition~\ref{P:s equi} shows that the sequence $q_n(t_n)$ is both uniformly bounded and equicontinuous in $H^s_+(\R)$.  Thus, $q_n(t_n)$ does indeed converge in $H^s_+(\R)$.
\end{proof}

\bibliography{refs}

\end{document}